\numberwithin{equation}{section}
\newtheorem{theorem}{Theorem}[section]
\newtheorem{corollary}[theorem]{Corollary}
\newtheorem{lemma}[theorem]{Lemma}
\newtheorem{proposition}[theorem]{Proposition}
\newtheorem{theorem-definition}[theorem]{Theorem/Definition}
\theoremstyle{definition}
\newtheorem{definition}[theorem]{Definition}
\newtheorem{setup}[theorem]{Setup}
\theoremstyle{remark}
\newtheorem{remark}[theorem]{Remark}
\newtheorem{example}[theorem]{Example}
\setlist[enumerate]{label = {\rm(\arabic*)}, ref={\rm\arabic*}}
\newlist{enuroman}{enumerate}{1}
\setlist[enuroman]{label = {\rm(\roman*)}, ref={\rm\roman*}}
\newlist{enualph}{enumerate}{1}
\setlist[enualph]{label = {\rm(\alph*)}, ref={\rm\alph*}}
\newcommand{\C}{\mathbb{C}}
\newcommand{\Z}{\mathbb{Z}}
\newcommand{\KK}{\mathbb{K}}
\newcommand{\cO}{\mathcal{O}}
\newcommand{\cC}{\mathcal{C}}
\newcommand{\cD}{\mathcal{D}}
\newcommand{\la}{\langle}
\newcommand{\ra}{\rangle}
\newcommand{\sq}{\subseteq}
\newcommand{\arrow}{arrow}
\newcommand{\F}{\mathbb{F}}
\newcommand{\Q}{\mathbb{Q}}
\newcommand{\N}{\mathbb{N}}
\newcommand{\A}{\mathbb{A}}
\newcommand{\fm}{\mathfrak{m}}
\newcommand{\fa}{\mathfrak{a}}
\newcommand{\fb}{\mathfrak{b}}
\newcommand{\spq}{\supseteq}
\newcommand{\Mustata}{Musta\c{t}\u{a}}
\newcommand{\ceq}{\colonequals}
\DeclareMathOperator{\Hom}{Hom}
\DeclareMathOperator{\End}{End}
\DeclareMathOperator{\Spec}{Spec}
\DeclareMathOperator{\id}{id}
\DeclareMathOperator{\im}{im}
\DeclareMathOperator{\Ann}{Ann}
\DeclareMathOperator{\Fun}{Fun}
\DeclareMathOperator{\FJN}{FJN}
\DeclareMathOperator{\mmod}{-mod}
\DeclareMathOperator{\str}{str}
\DeclareMathOperator{\Gal}{Gal}
\DeclareMathOperator{\BSR}{BSR}
\newcommand{\bs}[1]{\boldsymbol{#1}}
\newcommand{\lra}{\longrightarrow}
\def\lowsim{\vbox to 0pt{\vss\hbox{$\scriptstyle\sim$}\vskip-1.6pt}}
\def\highsim{\vbox to 0pt{\vss\hbox{$\scriptstyle\sim$}\vskip2.5pt}}
\title{Bernstein--Sato theory modulo $p^m$}
\author{Thomas Bitoun}
\address{Shanghai Institute for Mathematics and Interdisciplinary Sciences 
Block A, International Innovation Plaza, No.~657 Songhu Road, Yangpu District, Shanghai, China}
\email{thomas@simis.cn}
\author{Eamon Quinlan-Gallego}
\address{University of Illinois at Chicago, 
851 S. Morgan Street, 322 Science and Engineering Offices, Chicago, IL, USA}
\email{eamonqg@uic.edu}
\begin{document}

%%%%%%%%%%%%%%%%%%%%%%%%%%%%%%%
% Title page
%%%%%%%%%%%%%%%%%%%%%%%%%%%%%%%

%\removeabove{}
%\removebetween{}
%\removebelow{}

\maketitle

\begin{prelims}

\DisplayAbstractInEnglish

\bigskip

\DisplayKeyWords

\medskip

\DisplayMSCclass

\end{prelims}

%%%%%%%%%%%%%%%%%%%%%
% Table of Contents
%%%%%%%%%%%%%%%%%%%%%

\newpage

\setcounter{tocdepth}{1}

\tableofcontents

%%%%%%%%%%%%%%%%%%%%%
% Content begins here
%%%%%%%%%%%%%%%%%%%%%

\section{Introduction} \label{scn-intro}

Let $R := \C[x_1, \dots , x_n]$ be a polynomial ring over $\C$, and let $\cD_R := \C \la x_i , \frac{\partial}{\partial x_i} \ra$ be the Weyl algebra. Given a nonzero $f \in R$, its Bernstein--Sato polynomial $b_f(s) \in \C[s]$ is the monic polynomial of smallest degree satisfying a functional equation of the form
$$b_f(s) f^s = P(s) f^{s+1}$$
for some $P(s) \in \cD_R[s]$. Such a $b_f(s)$ always exists, and its roots are rational and negative by a theorem of Kashiwara \cite{Kas76}. 

This Bernstein--Sato polynomial detects important information about the singularities of $f$. For example, Kashiwara and Malgrange showed that one can recover the eigenvalues of the monodromy action on the nearby cycles from the roots of $b_f(s)$; see \cite{Kas83,Mal83}.  Koll\'ar also showed that the log-canonical threshold of $f$ is the negative of the largest root of the Bernstein--Sato polynomial, see \cite{Kollar97}, and afterwards Ein, Lazarsfeld, Smith, and Varolin proved that whenever $\alpha \in (0,1]$ is a jumping number for the multiplier ideals of $f$, this $\alpha$ must be a root of $b_f(-s)$; see \cite{ELSV04}.

  In the hope of building new invariants of singularities, we develop a notion of ``Bernstein--Sato polynomial'' in the case where one replaces the base $\C$ by $\Z / p^{m+1}$, where $p$ is a prime number (the theory works over a more general base, see Setup~\ref{setup-V}, but we focus on this case here for simplicity). This can be seen as a necessary step in developing a genuinely $p$-adic theory. Note that the case $m = 0$ (\textit{i.e.}~when one works over~$\F_p$) has already been treated in work of \Mustata \ \cite{Mustata2009} and the first author \cite{Bitoun2018} (see also \cite{BS16,QG19,QG19b,JNBQG} for more references).

The replacement of $\C$ with the base $\Z / p^{m+1}$ in the definition of $b_f(s)$ has two major components, which we illustrate with the following questions:  

\begin{enumerate}
	\item\label{item1} What should play the role of the algebra $\C \la x_i, \frac{\partial}{\partial x_i} \ra$?
	\item\label{item2} How does one give a ``functional equation'' in this setting?
\end{enumerate}

Let us briefly describe how these are dealt with.

We address question~\eqref{item1} by considering the ring of differential operators as defined by Grothendieck \cite[Chapter~16]{EGAIV}, which assigns to every commutative algebra $R$ over a commutative ring $A$ a subring $\cD_{R|A} \sq \End_A(R)$ of the ring $\End_A(R)$ of $A$-linear endomorphisms of $R$. We will write this ring as $\cD_R$ whenever $A$ is clear from the context, and we note that it is noncommutative in general.

We refer the reader to Section~\ref{subscn-background-diffops} for the precise definition. For now, let us mention that, in the case where $R := A [x_1, \dots , x_n]$ is a polynomial ring over $A$, one can give algebra generators for $\cD_R$ as follows:
$$\cD_R = A \left\la x_i, \partial_i^{[k]} \ \middle\vert \ i = 1, \dots , n ; \ k = 1, 2, \cdots \right\ra, $$
where through an abuse of notation one thinks of $x_i \in \End_A(R)$ as the operator that multiplies by the variable $x_i$, and $\partial_i^{[k]} \in \End_A(R)$ is the operator determined by
\begin{align*}
	\partial_i^{[k]} \left(x_1^{a_1} \cdots x_n^{a_n}\right) & := \binom{a_i}{k} x_i^{-k} x_1^{a_1} \cdots x_n^{a_n} \\
		& = \frac{a_i (a_i - 1) (a_i - 2) \cdots (a_i - k + 1)}{k!} x_i^{-k} x_1^{a_1} \cdots x_n^{a_n}
\end{align*}	
(recall that $\binom{a_i}{k}$ is always an integer, and hence this is defined regardless of whether one can divide by~$k!$ in $A$; also note  that $\binom{a_i}{k} = 0$ for $a_i < k$ and hence the above is indeed an element of $R$). Moreover, when $A$ is $\C$ (or any field of characteristic zero), we have $\partial_i^{[k]} = \partial_i^k / k!$, and one recovers $\cD_R =  \C \la x_i, \partial_i \ra$. Finally, let us mention that whenever $A = \Z / p^{m+1}$, we give an alternative characterization of this algebra in Proposition~\ref{prop-diffops-Frobenius}. 

For question~\eqref{item2} we use an alternative description of the Bernstein--Sato polynomial that translates naturally to an arbitrary base $A$. Basically, if $R := A[x_1, \dots , x_n]$ is a polynomial ring over $A$ and $f \in R$ is a nonzerodivisor, one considers the polynomial ring $R[t]$ in one extra variable, to which one gives an $\N$-grading by declaring $\deg x_i = 0$ and $\deg t = 1$; this induces a $\Z$-grading on $\cD_{R[t]}$. One then assigns to $f$ a module $N_f$ over the ring $(\cD_{R[t]})_0$ of differential operators of degree zero (we refer the reader to Section~\ref{scn-b-function-Zpm} for a detailed description of $N_f$). In general, every differential operator on $R$ induces a differential operator on $R[t]$, giving an inclusion $\cD_R \sq \cD_{R[t]}$. 

Let us describe what happens in the classical case $A = \C$. Here we get $(\cD_{R[t]})_0 = \cD_R[s]$, where $s := - \partial_t t$, and the Bernstein--Sato polynomial is then the minimal polynomial for the action of $s$ on $N_f$. Note that $N_f$ is in general infinite dimensional over $\C$, and hence the fact that $b_f(s) \neq 0$ is far from trivial, but nonetheless true. From this one concludes that $N_f$ splits as a direct sum $N_f = \bigoplus_{\lambda \in \C} (N_f)_\lambda$ of generalized eigenspaces for the action of $s$, and moreover one can recover the roots of $b_f(s)$ from this decomposition by
$$\left\{ \text{Roots of $b_f(s)$} \right\} = \left\{ \lambda \in \C \ | \ \left(N_f\right)_\lambda \neq 0 \right\}.$$
One can also think of this fact geometrically: if one thinks of the $\C[s]$-module $N_f$ as a sheaf on $\Spec(\C[s]) = \A^1_\C$, then it is supported at a finite number of points, and these points are precisely the roots of $b_f(s)$. As we will see, when working over $\Z / p^m$, one thinks of this finite-support property as an analogue of the ``existence theorem'' for Bernstein--Sato polynomials. 

As mentioned, the construction of the $(\cD_{R[t]})_0$-module $N_f$ can be carried out over an arbitrary base $A$, and in particular we can consider the case  $A = \Z / p^{m+1}$. The most striking difference in this situation is that $(\cD_{R[t]})_0$ has nonredundant higher-order differential operators, 
$$\left(\cD_{R[t]}\right)_0 = \cD_R\left[- \partial_t t, - \partial^{[p]} t^p, - \partial^{[p^2]} t^{p^2}, \dots \right],$$
and in particular the role of $\C[s]$ is now played by the algebra
$$C_m := \left(\Z / p^{m+1}\right) \left[- \partial_t t, - \partial^{[p]} t^p, - \partial^{[p^2]} t^{p^2}, \dots \right].$$

We refer the reader to Section~\ref{subscn-alg-cts-function} for more usable characterizations of this algebra. For now, let us just mention the following fact (note that in the case $m = 0$ these facts already appear in \cite{Bitoun2018,JNBQG}).

\begin{proposition}[see Section 2.3]
	The algebra $C_m$ is commutative, all of its prime ideals are maximal, and $\Spec(C_m)$ is homeomorphic to the space $\Z_p$ of $p$-adic integers equipped with the $p$-adic topology.
\end{proposition}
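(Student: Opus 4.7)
The plan is to exploit the observation that each generator $s_j := -\partial_t^{[p^j]} t^{p^j}$ acts diagonally on the monomial basis of $R[t]$: namely, $s_j \cdot t^n = -\binom{n + p^j}{p^j}\, t^n$. Since all the $s_j$ are simultaneously diagonal in this basis they commute, immediately giving commutativity of $C_m$. Moreover, since differential operators on $R[t]$ are determined by their action, the joint action injects $C_m \hookrightarrow \prod_{n \ge 0} \Z/p^{m+1}$, with $n$-th component sending $s_j$ to $-\binom{n+p^j}{p^j} \bmod p^{m+1}$.

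For the statements about primes, I would first note that $p^{m+1} = 0$ in $C_m$, so $p$ is nilpotent and therefore lies in every prime; hence $\Spec(C_m) \cong \Spec(C_m/pC_m) = \Spec(C_0)$ as topological spaces, and the primality/maximality structure of ideals in $C_m$ matches that of $C_0$. This reduces the proposition to the case $m = 0$.

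For $C_0$, I would filter by the finite-type subalgebras $A_j := \F_p[s_0, \dots, s_j]$. The key combinatorial input is the identity $\binom{n+p^i}{p^i} \equiv n_i + 1 \pmod p$, where $n_i$ denotes the $i$-th base-$p$ digit of $n$. This will follow from Lucas' theorem by a short case analysis: if $n_i < p-1$ there is no carrying and the $i$-th digit of $n + p^i$ is $n_i + 1$, while if $n_i = p - 1$ carrying makes it $0$, which still equals $n_i + 1 \pmod p$. As a consequence, $(s_0, \dots, s_j)$ evaluated at $n$ depends bijectively on $n \bmod p^{j+1}$, forcing the inclusion $A_j \hookrightarrow \prod_{n \in \Z/p^{j+1}} \F_p$ to be an isomorphism $A_j \cong \F_p^{p^{j+1}}$. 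In particular, $\Spec(A_j)$ is a discrete set of $p^{j+1}$ points indexed by $\Z/p^{j+1}$, with every prime maximal.

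Finally, writing $C_0 = \bigcup_j A_j$ as a filtered union yields $\Spec(C_0) = \varprojlim_j \Spec(A_j) = \varprojlim_j \Z/p^{j+1} = \Z_p$ with the $p$-adic topology, which is the desired homeomorphism. Maximality of every prime of $C_0$ follows since any such prime contracts to a maximal ideal in each $A_j$, making its residue field in $C_0$ a filtered colimit of copies of $\F_p$, hence $\F_p$ itself. The main technical point to verify is the Lucas-carry identity for $\binom{n+p^i}{p^i} \bmod p$; the rest of the argument is then essentially formal.
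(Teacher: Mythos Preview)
Your argument is correct, and it takes a genuinely different route from the paper's.

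The paper proceeds top-down: it first introduces the algebra $C(\Z_p, V)$ of locally constant $V$-valued functions on $\Z_p$, proves directly (via the universal property of inverse limits in locally ringed spaces) that $\Spec C(\Z_p, V) \cong (\Z_p, \underline V)$, and only afterwards identifies $(\cD_{V[t]})_0$ with $C(\Z_p, V)$ through the explicit isomorphism $\Delta(\phi)(t^a) = \phi(-1-a)\,t^a$. No computation with the specific generators $s_j$ or with Lucas' theorem is needed, and the argument works uniformly for any $V$ satisfying the paper's Setup. Your approach is bottom-up: you compute the eigenvalue of each $s_j$ on $t^n$, invoke Lucas to see that modulo $p$ this eigenvalue is $-(n_j+1)$, and thereby identify the finite-level subalgebras $A_j$ with $\F_p^{p^{j+1}}$ by hand. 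The paper's approach is slicker and more general; yours is more concrete and makes the indexing by $p$-adic digits completely explicit, which is pleasant.

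One small point to tighten: your reduction ``$\Spec(C_m/pC_m) = \Spec(C_0)$'' tacitly assumes $C_m/pC_m \cong C_0$, which is not obvious a priori (for $m>0$ the eigenvalue $-\binom{n+p^j}{p^j} \bmod p^{m+1}$ depends on more than just the $j$-th digit of $n$, so the relations among the $s_j$ over $\Z/p^{m+1}$ are not the same as over $\F_p$). What you actually need, and what your setup already gives, is that the surjection $C_m \twoheadrightarrow C_0$ has nilpotent kernel: any $P$ in the kernel acts diagonally with eigenvalues in $p\,\Z/p^{m+1}$, hence $P^{m+1}=0$; since $C_0$ embeds in $\prod_n \F_p$ and is therefore reduced, this kernel is exactly the nilradical of $C_m$, and $\Spec(C_m)\cong\Spec(C_0)$ follows.
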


In particular, to each $p$-adic integer $\alpha \in \Z_p$, one can assign a maximal ideal $\fm_\alpha \sq C_m$, and every prime ideal of $C_m$ arises this way. From now on let us identify $\Spec(C_m) = \Z_p$. Hence we think of $N_f$ as a sheaf on $\Z_p$. 

Let us summarize what is known about the case $m = 0$. The following theorem of the first author can be seen as an analogue of the existence of Bernstein--Sato polynomials, as an analogue of Kashiwara's theorem of rationality and negativity of their roots, and as an analogue of the theorem of Ein--Lazarsfeld--Smith--Varolin that connects these roots to jumping numbers. 

\begin{theorem-definition} [\textit{cf.} \cite{Bitoun2018}] \label{thm-bitoun-intro}
	Suppose $f \in \F_p[x_1, \dots , x_n]$ is a nonzero polynomial. The module $N_f$ is supported at a finite number of $p$-adic integers $\alpha_1, \dots , \alpha_s \in \Z_p$. These $p$-adic integers are called the Bernstein--Sato roots of $f$, and we denote this collection by $\BSR(f) \sq \Z_p$. 

	One has
	$$\BSR(f) = - \left( \FJN(f) \cap \Z_{(p)} \cap (0,1] \right),$$
	where $\FJN(f)$ denotes the collection of\, $F$-jumping numbers of $f$. In particular, the Bernstein--Sato roots of $f$ are rational, negative, and lie in the interval $[-1, 0)$. 
\end{theorem-definition}

This result raises the question of whether one can develop a theory that also includes roots smaller than $-1$, and whether one can assign a multiplicity to each root (in characteristic zero, the roots that are smaller than $-1$ carry important information about the singularities of $f$; see for example \cite{Saito-bsrs,MP20}).
	
Let us now talk about our results; in summary, we show that passing to arbitrary $m$ does not produce new negative Bernstein--Sato roots, but that (surprisingly) Bernstein--Sato roots can be positive in this setting. We also develop a multiplicity-like invariant that, despite not providing a good analogue for multiplicity, does give interesting information.

We begin by proving the finite-support statement for the module $N_f$, which we use to define Bernstein--Sato roots.

\begin{theorem-definition}[\textit{cf.} Definition~\ref{def-BSR} and Theorem~\ref{thm-BSR-fin}]
	Let $f \in (\Z / p^{m+1})[x_1, \dots , x_n]$ be a nonzerodivisor. Then the $C_m$-module $N_f$ is supported at a finite number of $p$-adic integers $\alpha_1, \dots , \alpha_s \in \Z_p$. These $p$-adic integers are called the Bernstein--Sato roots of $f$, and we denote this collection by $\BSR(f) \sq \Z_p$. 
\end{theorem-definition}

We are able to show that these Bernstein--Sato roots are rational. From existing theory in characteristic zero and characteristic $p > 0$ ($m = 0$), one would expect them to be negative. We give examples to show that this need not be the case (see Example~\ref{ex-pos-BSR}), but we prove that the roots that {\em are} negative agree with the Bernstein--Sato roots of the mod-$p$ reduction of $f$. We also show that every positive root is an integer translate of a negative one. In summary, we have the following. 

\begin{theorem}[\textit{cf.} Theorems~\ref{thm-BSR-rat},~\ref{thm-BSR-neg}, and~\ref{thm-Z-translate}]
	Let $f \in (\Z / p^{m+1})[x_1, \dots , x_n]$ be a nonzerodivisor, and let $f_0 \in \F_p[x_1, \dots , x_n]$ be its mod-$p$ reduction.
	\begin{enuroman}
	\item The Bernstein--Sato roots of $f$ are rational; \textit{i.e.}~$\BSR(f) \sq \Z_{(p)}$.
	\item We have $\BSR(f) \cap (\Z_{(p)})_{< 0} = \BSR(f_0)$.
	\item We have $\BSR(f) + \Z = \BSR(f_0) + \Z$.
	\end{enuroman}
\end{theorem}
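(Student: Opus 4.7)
My plan is to deduce all three parts from Theorem \ref{thm-bitoun-intro} (the $m=0$ case) together with two ingredients: a ``shift'' operator on $N_f$ that translates the $C_m$-support in $\Z_p$ by integers, and a reduction-mod-$p$ comparison between $N_f$ and $N_{f_0}$. Rationality (i) will then follow from (ii) and (iii) formally, since every $\alpha \in \BSR(f)$ can be written as $\beta + n$ for some $\beta \in \BSR(f_0) \sq \Z_{(p)}$ and some $n \in \Z$, hence $\alpha \in \Z_{(p)}$.

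For (iii), I would exhibit shift endomorphisms on a suitable localization of $N_f$ induced by multiplication by $t$ and by $\partial_t$. The commutation relation $[-\partial_t t, t] = -t$ in $\cD_{R[t]}$ shows that multiplication by $t$ shifts the $s$-eigenvalue by $-1$, and the analogous relations for each higher generator $-\partial^{[p^k]} t^{p^k}$ of $C_m$ must be arranged to shift every generator in a $p$-adically compatible way. Granting this, these shifts translate the $C_m$-support by $\pm 1$ in $\Z_p$, so $\BSR(f) + \Z$ is genuinely $\Z$-stable. Running the same argument for $f_0$ and comparing the two sets via the mod-$p$ reduction map then gives $\BSR(f) + \Z = \BSR(f_0) + \Z$, since both $\Z$-orbit sets are determined by negative representatives (by Theorem \ref{thm-bitoun-intro}, $\BSR(f_0) \sq [-1,0)$).

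For (ii), the plan is to use the surjection $\Z/p^{m+1} \to \F_p$, or inductively $\Z/p^{m+1} \to \Z/p^m$, to obtain a comparison map between $N_f$ and $N_{f_0}$. One direction, $\BSR(f) \cap (\Z_{(p)})_{< 0} \sq \BSR(f_0)$, should follow from a semicontinuity argument on supports: any surviving negative generalized eigenvalue of the $C_m$-action on $N_f$ must descend to one on $N_{f_0}$, again using $\BSR(f_0) \sq [-1, 0)$ to absorb positive-integer shifts coming from the kernel/cokernel of reduction. For the reverse direction, given $\beta \in \BSR(f_0)$ I would produce a nonzero local section of $N_f$ at $\beta$ by lifting from $N_{f_0}$, which likely requires a Nakayama-type argument based on the fact that $N_f / p N_f$ maps to (or contains) $N_{f_0}$.

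The main obstacle I foresee lies in controlling the shift operator in the presence of the higher-order generators $-\partial^{[p^k]} t^{p^k}$ of $C_m$. In characteristic zero one has a single polynomial generator $s = -\partial_t t$, but here the identification $\Spec C_m = \Z_p$ combines infinitely many generators, and one must verify that multiplication by $t$ really does induce the shift by $-1$ on the continuous spectrum $\Z_p$ and not some other continuous self-map. A related subtlety is that $N_f$ may carry nontrivial $p$-torsion, which must be carefully accounted for when setting up the short exact sequence relating $N_f$ with $N_{f_0}$ and when deciding which parts of the support actually survive reduction.
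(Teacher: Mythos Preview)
Your plan has real gaps, and the paper's route is quite different.

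The shift-operator approach to (iii) does not work as stated. Multiplication by $t$ and $\partial_t$ are not degree-zero operators, so they do not act on $N_f$; and even on the ambient module $H_f$ they do not induce bijections between the cyclic submodules $(\cD_{R[t]})_0\cdot f^k\delta_0$ (if they did, $\BSR(f)$ itself would be $\Z$-stable, contradicting finiteness). Your conclusion that ``$\BSR(f)+\Z$ is $\Z$-stable'' is tautological; the actual content of (iii) is the comparison with $\BSR(f_0)+\Z$, and you do not explain how the shift helps there. You also have the two directions of (ii) reversed: the inclusion $\BSR(f_0)\subseteq\BSR(f)$ is the soft one (since $(N_{f_0})_\alpha$ is a quotient of $(N_f)_\alpha$), whereas $\BSR(f)\cap(\Z_{(p)})_{<0}\subseteq\BSR(f_0)$ is \emph{not} a semicontinuity statement and requires a genuine idea.

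That idea, which you do not mention, is the Blickle--Musta\c{t}\u{a}--Smith theorem: for every negative $\alpha\in\Z_{(p)}$ one has $\cD_{R_0}\cdot\bs{f_0^\alpha}=R_0[f_0^{-1}]\bs{f_0^\alpha}$. Via Nakayama this lifts to $\cD_R\cdot\bs{f^\alpha}=R[f^{-1}]\bs{f^\alpha}$, so the right-hand vertical map in the obvious square comparing $(N_f)_\alpha$ and $(N_{f_0})_\alpha$ becomes an isomorphism, and then surjectivity of $\cD_R f\bs{f^\alpha}\hookrightarrow\cD_R\bs{f^\alpha}$ can be tested mod $\fm$; this gives (ii). For (iii) the paper applies the same mechanism to each generator $f^k\bs{f^\alpha}$: if $\alpha\in\Z_{(p)}\cap[-1,0)$ is not in $\BSR(f_0)$, then by Bitoun no translate of $\alpha$ is, hence every $\cD_{R_0}\cdot f_0^k\bs{f_0^\alpha}$ equals the full module, and Nakayama transports this upstairs to show no translate lies in $\BSR(f)$. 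Note this argument only handles rational $\alpha$, so the paper's proof of (iii) presupposes (i); your proposed logical order would be circular with these arguments. The paper instead proves (i) first and independently: one reduces to the case $F(f)=f^p$ by replacing $f$ with $f^{p^m}$, shows that then $\alpha\in\BSR(f)$ forces $p\alpha+i\in\BSR(f)$ for some $0\le i<p$, and combines this with finiteness of $\BSR(f)$ to force eventual periodicity of the $p$-adic expansion of $\alpha$.
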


Given a $p$-adic integer $\alpha \in \Z_p$, the powers of the maximal ideals $\fm_\alpha \sq C_m$ form a decreasing chain, which takes the form 
$$\fm_\alpha \supseteq \fm_\alpha^2 \supseteq \fm_\alpha^3 \supseteq \cdots \supseteq \fm_\alpha^{m+1} = \fm_\alpha^{m+2} = \fm_\alpha^{m+3} = \cdots$$
(in particular, when $m = 0$, the ideals $\fm_\alpha$ are idempotent; that is, $\fm_\alpha = \fm_\alpha^2$). Given a Bernstein--Sato root~$\alpha$ for $f$, we define the number
$$\str(\alpha, f) := \min\left\{k \geq 0 \ \middle\vert \ \fm_\alpha^k \left(N_f\right)_\alpha = 0 \right\},$$
where $(N_f)_\alpha$ is the stalk of $N_f$ at $\alpha$ (note that in characteristic zero this gives the multiplicity of a root~$\alpha$). Some quick computations show that this does not give an analogue for multiplicity; for example, for $f = x_1$ one has $\str(-1, x_1) = m + 1$ (see Example~\ref{ex-x1-strength}). For this reason we give this invariant another name, and we call $\str(\alpha, f)$ the {\it strength} of $\alpha$ in $f$; this is a new feature of the theory that is not available when $m = 0$. We show that these invariants detect whether $\alpha$ arises as a root of the Bernstein--Sato polynomial in characteristic zero.

\begin{theorem}[\textit{cf.} Corollary~\ref{cor-strength-b}]
Let $f \in \Z[x_1, \dots , x_n]$ be a polynomial with integer coefficients, and let $p$ be a large enough prime. For every integer $m \geq 0$, let $f_m \in (\Z / p^{m+1})[x_1, \dots , x_n]$ be the mod-$p^{m+1}$ reduction of $f$. Given $\alpha \in \Z_{(p)}$, the sequence $\left( \str(\alpha, f_m) \right)_{m = 0}^\infty$ is nondecreasing and satisfies 
$$\nu_p \left( b_f(\alpha) \right) \geq \str\left(\alpha, f_m\right)$$
for every $m \geq 0$, where $\nu_p(-)$ denotes $p$-adic valuation. In particular, if\, $(\str(\alpha), f_m)_{m = 0}^\infty$ is unbounded, then we have $b_f(\alpha) = 0$.
\end{theorem}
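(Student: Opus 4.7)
The plan is to prove the main inequality $\nu_p(b_f(\alpha)) \geq \str(\alpha,f_m)$ first, and then to deduce the monotonicity and the unboundedness claim as essentially formal consequences.

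The key input from characteristic zero is the following. For $p$ sufficiently large (depending only on $f$), the Bernstein--Sato polynomial $b_f(s) \in \Q[s]$ has coefficients in $\Z_{(p)}$, and the classical functional equation $b_f(s) f^s = P(s) f^{s+1}$ holds for some $P(s)$ with coefficients in the $\Z_{(p)}$-version of $\cD_R$. I would reduce this equation modulo $p^{m+1}$ and interpret both sides inside the construction of $N_{f_m}$ from Section \ref{scn-b-function-Zpm}, obtaining the annihilation $b_f(s) \cdot N_{f_m} = 0$ inside $C_m$ via the identification $s = -\partial_t t$. Localizing at any maximal ideal $\fm_\alpha \subseteq C_m$ then yields $b_f(s) \cdot (N_{f_m})_\alpha = 0$.

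To deduce the main inequality, fix $\alpha \in \Z_{(p)}$ and assume $k \ceq \str(\alpha,f_m) \geq 1$ and $b_f(\alpha) \neq 0$ (the remaining cases being trivial); set $\nu \ceq \nu_p(b_f(\alpha))$ and write $b_f(\alpha) = p^\nu u$ for a unit $u$. Using the Taylor identity $b_f(s) = b_f(\alpha) + (s-\alpha) h(s)$ with $h \in \Z_{(p)}[s]$, for any $x$ in the top filtration piece $\fm_\alpha^{k-1}(N_{f_m})_\alpha$ the relation $b_f(s) x = 0$ becomes $p^\nu u x = -(s-\alpha) h(s) x$, whose right-hand side lies in $\fm_\alpha \cdot \fm_\alpha^{k-1}(N_{f_m})_\alpha = \fm_\alpha^k(N_{f_m})_\alpha = 0$. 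Hence $p^\nu$ kills the top filtration piece. The next step, and the technical heart of the argument, is to propagate this $p^\nu$-annihilation down the whole $\fm_\alpha$-filtration and to promote it, using $p \in \fm_\alpha$ together with the explicit local structure of $C_m$ at $\fm_\alpha$ provided by Subsection \ref{subscn-alg-cts-function}, into the full $\fm_\alpha^\nu$-annihilation of $(N_{f_m})_\alpha$, yielding $\str(\alpha,f_m) \leq \nu$.

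For the monotonicity, I would invoke functoriality: the reduction $\Z/p^{m+2} \twoheadrightarrow \Z/p^{m+1}$ induces surjections $C_{m+1} \twoheadrightarrow C_m$ and $N_{f_{m+1}} \twoheadrightarrow N_{f_m}$ compatible with the identifications $\Spec C_{m+1} = \Spec C_m = \Z_p$ and the maximal ideals $\fm_\alpha$; localizing at $\alpha$ produces a surjection $(N_{f_{m+1}})_\alpha \twoheadrightarrow (N_{f_m})_\alpha$, so any power of $\fm_\alpha$ killing the former kills the latter, which gives $\str(\alpha,f_m) \leq \str(\alpha,f_{m+1})$. The unboundedness statement is then immediate: if $(\str(\alpha,f_m))_m$ is unbounded then the main inequality forces $\nu_p(b_f(\alpha)) = \infty$, hence $b_f(\alpha) = 0$ since $b_f(\alpha) \in \Q$. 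The hardest step will be the propagation-and-promotion in the second paragraph: the naive chase only delivers that $p^{k\nu}(N_{f_m})_\alpha = 0$, which is merely $p$-torsion and does not by itself yield the required $\fm_\alpha^\nu$-torsion; bridging that gap forces one to tie the $p$-adic filtration to the $\fm_\alpha$-adic filtration, carefully exploiting the interplay of $s-\alpha$, the higher generators $s_k - \alpha^{(k)}$, and the uniformizer $p$ inside successive powers of $\fm_\alpha$.
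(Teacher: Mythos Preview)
Your overall strategy is sound, but the step you flag as hardest is in fact a non-step, and the paper's route avoids it entirely by a small change of perspective.

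The key point you are missing is Proposition~\ref{prop-C-stalk-description}: the stalk $(N_{f_m})_\alpha$ is naturally $N_{f_m}/(0:\alpha)N_{f_m}$, so the $C_m$-action on it factors through the evaluation map $e_\alpha: C_m \to V = \Z/p^{m+1}$. In particular $s-\alpha$ lies in $(0:\alpha)$, not merely in $\fm_\alpha$, and hence acts as \emph{zero} on the stalk. Thus your Taylor identity $b_f(s) = b_f(\alpha) + (s-\alpha)h(s)$ already gives $b_f(\alpha)\cdot(N_{f_m})_\alpha = 0$ for \emph{every} element, not just those in the top filtration piece; writing $b_f(\alpha) = p^\nu u$ with $u$ a unit and using $\fm = (p)$ and $\fm_\alpha^t(N_{f_m})_\alpha = \fm^t(N_{f_m})_\alpha$ (as noted just before Example~\ref{ex-x1-strength}) yields $\str(\alpha,f_m)\le\nu$ immediately. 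There is no propagation and no promotion to carry out.

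The paper streamlines this further by never passing through $N_{f_m}$ or the global relation $b_f(s)\cdot N_{f_m}=0$ at all. Instead it specializes the functional equation to an integer $a\equiv\alpha\bmod p^{e+m}$, obtaining an honest identity $b_f(a)f^a = P(a)f^{a+1}$ in $R_{\Z_{(p)}}[f^{-1}]$; reducing mod $p^{m+1}$ and interpreting in $R_m[f_m^{-1}]\bs{f_m^\alpha}$ via Subsection~\ref{subscn-Rf-alpha} gives $b_f(\alpha)\bs{f_m^\alpha}\in\cD_m\cdot f_m\bs{f_m^\alpha}$ directly. The inequality then drops out of the characterization $\str(\alpha,f_m)=\min\{t:\fm^t\bs{f_m^\alpha}\subseteq\cD_m\cdot f_m\bs{f_m^\alpha}\}$. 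This bypasses the somewhat delicate question of how exactly to reduce the \emph{unspecialized} functional equation into a relation in $C(\Z_p,\cD_m)$, which you gloss over. Your treatment of monotonicity agrees with the paper's (Proposition~\ref{prop-strength-red}), and the unboundedness consequence is immediate in both approaches.
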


\subsection*{Acknowledgments}

We would like to thank Christopher Dodd, Jack Jeffries, Kiran Kedlaya, Pedro L\'opez Sancha, Mircea \Mustata, Karl Schwede, Kevin Tucker, and Jakub Witaszek for useful comments and conversations throughout the course of this project.

\section{Background, preliminaries, and notation}

In this paper we will always work over a very specific class of base rings; we begin by formalizing this in the following.

\begin{setup} \label{setup-V}
  Let $(V, \fm, \KK)$ be a commutative artinian local ring with residue field $\KK$ of characteristic $p > 0$. We assume that there exists a bijective ring homomorphism $F\colon V \to V$ such that the induced map $\KK \to \KK$ is the Frobenius morphism. We fix such an $F$, which will be referred to as a lift of Frobenius.
\end{setup}

\begin{remark} \label{rmk-V}
	Let $(V, \fm, \KK, F)$ be as in Setup~\ref{setup-V}. Since $F\colon V \to V$ is bijective by assumption, it must be a bijection on nonunits, and therefore $F(\fm) = \fm$. The bijectivity of $F$ also implies that the Frobenius on $\KK$ is surjective, and hence $\KK$ is forced to be perfect.
\end{remark}

\begin{example}
	We provide some examples of the above setup. 
	\begin{enuroman}
	\item The simplest example is the following: $V = \Z / p^{m+1}$, $\fa = (p)$, and $F\colon V \to V$ being the identity map.
	\item More generally, if $k$ is a perfect field of characteristic $p > 0$, one could take $V = W_{m+1}(k)$ to be the ring of Witt vectors of length $m + 1$ over $k$, with $\fm = (p)$ and $F$ the Frobenius endomorphism on Witt vectors.
	\item If $K$ is a finite Galois extension of $\Q_p$ with ring of integers $(\cO_K, \fm, k)$, one can take $V = \cO_K / \fm^{m+1}$. It is well known that the induced map $\Gal(K / \Q_p) \to \Gal(k / \F_p)$ is surjective, see \cite[Section~7, Proposition~20]{Serre-local-fields}, so there exists a lift of Frobenius.
	\item There are interesting examples in equal characteristic $p$ which also satisfy the conditions of Setup~\ref{setup-V}; for example, one could take $V = \F_p[t] / (t^{m+1})$ with $F(t) = t$. This gives rise to some interesting questions, which we will not tackle.
	\end{enuroman}
\end{example}

\subsection{Basics on lifts of Frobenius} \label{subscn-basics-lof}

Let $(V, \fm, \KK, F)$ be as in Setup~\ref{setup-V}. Given a $V$-algebra $R$, we define what it means for a lift of Frobenius $R \to R$ to be compatible with the lift of Frobenius $F\colon V \to V$. 

\begin{definition}
	Let $(V, \fm, \KK, F)$ be as in Setup~\ref{setup-V}, and let $R$ be a $V$-algebra. A ring endomorphism $F_R\colon R \to R$ is called a compatible lift of Frobenius if the diagram
	$$\begin{tikzcd}
		V \arrow[r, "F"] \arrow[d] & V \arrow[d] \\
		R \arrow[r, "F_R", swap] & R
	\end{tikzcd}$$
	commutes and the induced map $R / \fm R \to R / \fm R$ is the Frobenius endomorphism. If no confusion is likely to arise, we may also write $F$ for such a lift of Frobenius on $R$. 
\end{definition}

\begin{remark}
	When $R$ is smooth over $V$, we can always fill in the dashed arrow in the commutative diagram
	$$\begin{tikzcd}
		V \arrow[r] & R \arrow[rrd, twoheadrightarrow] & \ & \ \\
		V \arrow[u, "F"] \arrow[r] & R \arrow[u, dashed] \arrow[r] & R / \fm R \arrow[r, "\text{Frob}", swap] & R / \fm R\rlap{,}
	\end{tikzcd}$$
	which tells us that there exists a compatible lift of Frobenius. 
\end{remark}

\begin{example}
	If $R \ceq V[x_1, \dots , x_n]$, then $R$ admits a compatible lift of Frobenius $F\colon R \to R$ given by $F_R(a) = F(a)$ for all $a \in V$ and $F_R(x_i) = x_i^p$ for all $i = 1, \dots , n$. More generally,  one can instead declare $F_R(x_i) = x_i^p + y_i$, where $y_1, \dots , y_n$ are arbitrary elements of $\fm R$. 
\end{example}

Let $R$ be a $V$-algebra and $F\colon R \to R$ be a lift of Frobenius on $R$. Given an integer $e \geq 0$, we let $F^e\colon R \to R$ be the ring endomorphism given by $F^e \ceq F \circ \cdots \circ F$ ($e$ times), with the understanding that $F^0$ is the identity on $R$. 

Recall that, given a homomorphism $\phi\colon A \to B$ of commutative rings, one obtains an associated restriction of scalars functor $\phi_*\colon B\mmod \to A\mmod$, as well as an extension of scalars functor $\phi^*\colon A \mmod \to B\mmod$. The same is true, therefore, for the endomorphisms $F^e\colon R \to R$, but the fact that these endomorphisms have the same source and target makes these functors slightly confusing. We introduce some standard notation to try to alleviate this confusion. 

If $M$ is an $R$ module and $e \geq 0$ is an integer, we let $F^e_* M$ be the $R$-module obtained through restriction of scalars along $F^e\colon R \to R$. An element $u \in M$ will be denoted by $F^e_* u$ when we want to emphasize that we view $u$ as an element of $F^e_* M$ instead of $M$. With this notation, the $R$-module structure of $F^e_* M$ is given by the expression 
$$g \cdot F^e_* u = F^e_* \left( F^e(g) \cdot u \right),\quad \text{ where } g \in R, u \in M.$$

This construction is rigged so that $F^e\colon R \to F^e_* R$ is $R$-linear; in particular, we may think of $F^e_* R$ as an $R$-algebra, where the $R$-module structure is given through $F^e$. Note that, given integers $i, j \geq 0$, the set $\Hom_R(F^i_* R, F^j_* R)$ is given by
\begin{equation} \label{eqn-hom}
\Hom_R\left(F^i_* R, F^j_* R\right) = \left\{ \phi \in \Hom_\Z(R, R) \ \middle\vert \ \phi(F^i(g) h) = F^j(g) \phi(h) \text{ for all } g, h \in R \right\}.
\end{equation}

Given an $R$-module $M$, the extension of scalars along $F^e$ is given by $F^{e*}(M) = B \otimes_R M$, where $B$ is the $(R, R)$-bimodule given by $B = R$ as a left $R$-module and $B = F^e_* R$ as a right $R$-module. For future purposes, it will be useful to think of $B$ as $B = \Hom_R(R, F^e_* R)$, where the bimodule structure on $\Hom_R(R, F^e_* R)$ is given by
\begin{align*}
	\phi \cdot g = \phi \circ g, \\
	g \cdot \phi = F^e_* g \circ \phi
\end{align*}
for all $\phi \in \Hom_R(R, F^e_* R)$ and $g \in R$. With this notation, therefore, we have
$$F^{e*}(M) = \Hom_R(R, F^e_* R) \otimes_R M.$$

\begin{example} \leavevmode
	\begin{enuroman}
	\item One has $F^{e*}(R) \cong R$. 
	\item Suppose an $R$-module $M$ is presented as 
	$$
		R^{\oplus m} \overset{A}\lra R^{\oplus n} \lra M \lra 0,
	$$
	where $A = (a_{ij})$ is an $n \times m$ matrix over $R$. Then the module $F^{e*}(M)$ is presented by 
	$$
		R^{\oplus m} \xrightarrow{F^e(A)} R^{\oplus n} \lra F^{e*}(M) \lra 0,
	$$
	where the matrix $F^e(A)$ is given by $F^e(A) = (F^e(a_{ij}))$. 
	\end{enuroman}
\end{example}

Recall that given a homomorphism $A \to B$ of commutative rings and an ideal $I \sq A$, one obtains the ideal $IB \sq B$ as the extension of $I$. Once again, the same is the case for the endomorphisms $F^e\colon R \to R$, but the fact that the source and target are the same ring means that more careful notation is required. 

\begin{definition} \label{def-expansion-ideal-Frobenius}
	Let $(V, \fm, \KK, F)$ be as in Setup~\ref{setup-V}, let $R$ be a $V$-algebra, and let $F\colon R \to R$ be a compatible lift of Frobenius. Given an ideal $J \sq R$ and an integer $e \geq 0$, we define the new ideal $\left( F^e(J) \right) \sq R$ as the extension of $I$ along $F^e\colon R \to R$. In other words, we have 
	$$ \left( F^e(J) \right) \ceq \left(F^e(g) \ | \ g \in J\right).$$
\end{definition}

\begin{remark}
	To compute $\left( F^{e} (J) \right)$ it suffices to apply $F^e$ to the generators of $J$. More precisely, given generators $J = (g_i \ | \ i \in I )$, we have $ \left( F^{e} (J) \right) = \left(F^e(g_i) \ | \ i \in I\right)$.
\end{remark}

\begin{remark} \label{rmk-ideal-F-pullback}
	Applying the functor $F^{e*}(-)$ to the inclusion $J \sq R$ yields a map
	$$F^{e*}(J) \lra F^{e*}(R) \cong R,$$
	whose image is precisely $\left( F^e(J) \right)$. We will later see that, when $R$ is smooth over $V$, the map $F^e\colon R \to R$ is flat (see Lemma~\ref{lemma-FeR-fgp}), and therefore one can identify $\left( F^e (J) \right) \cong F^{e*}(J)$.
\end{remark}

\begin{lemma} \label{lemma-p-power}
	Let $(V, \fm, \KK, F)$ be as in Setup~{\rm\ref{setup-V}}, and let $R$ be a $V$-algebra. If $g, h \in R$ are elements such that $g \equiv h \mod \fm^k R$ for some integer $k \geq 1$, then $g^{p^e} \equiv h^{p^e} \mod \fm^{k+e} R$ for all $e \geq 0$. 
\end{lemma}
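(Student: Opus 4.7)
The plan is to prove the statement by induction on $e \geq 0$.

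For the base case $e = 0$ the claim is the hypothesis. For the inductive step, assume $g^{p^e} \equiv h^{p^e} \mod \fm^{k+e} R$, so we may write $g^{p^e} = h^{p^e} + b$ for some $b \in \fm^{k+e} R$. Then expanding by the binomial theorem,
\[
g^{p^{e+1}} = (g^{p^e})^p = (h^{p^e} + b)^p = h^{p^{e+1}} + \sum_{i=1}^{p-1} \binom{p}{i} h^{p^e(p-i)} b^i + b^p.
\]
The key observation is that $p \in \fm$: indeed, $V$ has residue field $\KK$ of characteristic $p$, so $p$ maps to $0$ in $\KK$ and hence lies in $\fm$. Consequently, for $1 \leq i \leq p-1$, the binomial coefficient $\binom{p}{i}$ is divisible by $p$, so $\binom{p}{i} h^{p^e(p-i)} b^i \in \fm \cdot \fm^{i(k+e)} R \sq \fm^{k+e+1} R$. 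For the last term, $b^p \in \fm^{p(k+e)} R \sq \fm^{k+e+1} R$, where the inclusion uses $(p-1)(k+e) \geq 1$, which holds since $p \geq 2$ and $k + e \geq 1$.

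Combining these, $g^{p^{e+1}} \equiv h^{p^{e+1}} \mod \fm^{k+e+1} R$, completing the induction.

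There is no real obstacle here; the only point requiring any care is to observe that $p \in \fm$, which is forced by the hypothesis that $\KK$ has characteristic $p$. Everything else is the standard freshman-style calculation underlying the Frobenius endomorphism, adapted to keep track of the additional factor of $p$ coming from the middle binomial coefficients.
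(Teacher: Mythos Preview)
Your proof is correct and follows essentially the same approach as the paper: reduce by induction to the single step $e \mapsto e+1$, expand $(h^{p^e}+b)^p$ by the binomial theorem, and use that $p \in \fm$ to handle the middle terms while $b^p \in \fm^{p(k+e)}R \subseteq \fm^{k+e+1}R$ handles the last term.
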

\begin{proof}
	By induction it suffices to prove the claim in the case $e = 1$. Let $x \in \fm^k R$ be such that $g = h + x$. The binomial expansion then gives
	\begin{align*}
		g^p & = h^p + \sum_{i = 1}^{p} \binom{p}{i}\, x^i g^{p-i}.  
	%		    & = x^p + \sum_{i = 1}^{p} \binom{p}{i} p^{ki} \ x^i z^{i}.
	\end{align*}
	To conclude the proof, we claim that $\binom{p}{i} x^i \in \fm^{k+1} R$ for all $i = 1, \dots, p$. 

	To prove the claim, first recall that in $V$ we have $p \in \fm$. In the case $i \neq p$, the claim then follows because $p$ divides $\binom{p}{i}$, and hence $\binom{p}{i} x^i \in \fm^{1 + ik} R \sq \fm^{k + 1} R$. In the case $i = p$, recall that $k \geq 1$ by assumption and therefore $x^p \in \fm^{kp} \sq \fm^{k + 1}$.
\end{proof}

\begin{lemma} \label{lemma-Frob-p-power}
	Let $(V, \fm, \KK, F)$ be as in Setup~{\rm\ref{setup-V}}, and fix an $m \geq 0$ such that $\fm^{m+1} = 0$ in $V$. Let $R$ be a $V$-algebra equipped with a compatible lift of Frobenius $F\colon R \to R$. For all integers $e \geq 0$ and all $g \in R$, we have
	$$F^e\left(g^{p^m}\right) = g^{p^{m+e}}.$$
\end{lemma}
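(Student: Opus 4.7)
The idea is to reduce the equality, via the ring-homomorphism property of $F^e$, to an equality of $p^m$-th powers, and then to upgrade a congruence modulo $\fm R$ to an actual equality by applying the preceding Lemma \ref{lemma-p-power} and using the vanishing hypothesis $\fm^{m+1} = 0$.

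Concretely, I would proceed in three steps. First, since $F^e$ is a ring endomorphism, we have $F^e(g^{p^m}) = F^e(g)^{p^m}$, so it is enough to show $F^e(g)^{p^m} = g^{p^{m+e}}$. Second, I would establish the weaker congruence $F^e(g) \equiv g^{p^e} \pmod{\fm R}$ by induction on $e$: the cases $e = 0$ and $e = 1$ are immediate from the definition of a compatible lift of Frobenius, and for the inductive step one writes $F^{e-1}(g) = g^{p^{e-1}} + y$ with $y \in \fm R$, applies $F$, and uses both the $e = 1$ case and $F(\fm R) \sq \fm R$ (which in turn follows from $F(\fm) = \fm$, as noted in Remark \ref{rmk-V}). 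Third, I would apply Lemma \ref{lemma-p-power} to the pair $F^e(g), g^{p^e}$ with $k = 1$ and exponent $m$ to obtain $F^e(g)^{p^m} \equiv g^{p^{m+e}} \pmod{\fm^{m+1} R}$, which is an honest equality since $\fm^{m+1} R = 0$.

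There is no real obstacle here beyond careful book-keeping of the exponents. The argument essentially packages two facts: a compatible lift of Frobenius differs from the $p$-th power map by an element of $\fm R$, and Lemma \ref{lemma-p-power} amplifies a congruence modulo $\fm R$ to a congruence modulo $\fm^{m+1} R$ after raising to the $p^m$-th power, which becomes an equality under the assumption $\fm^{m+1} = 0$.
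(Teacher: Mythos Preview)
Your proposal is correct and follows essentially the same approach as the paper: reduce via the multiplicativity of $F^e$ to comparing $F^e(g)^{p^m}$ with $(g^{p^e})^{p^m}$, use that $F^e(g) \equiv g^{p^e} \pmod{\fm R}$, and then apply Lemma~\ref{lemma-p-power} with $k=1$ to conclude. The only difference is that the paper treats the congruence $F^e(g) \equiv g^{p^e} \pmod{\fm R}$ as immediate (since $F$ induces the Frobenius on $R/\fm R$, so $F^e$ induces the $p^e$-power map there), whereas you spell it out via induction; both are fine.
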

\begin{proof}
	Since $F$ lifts Frobenius on $R / \fm R$, we know that $F^e(g) \equiv g^{p^e} \mod \fm R$. By Lemma~\ref{lemma-p-power} we conclude that $F^e(g^{p^m}) = F^e(g)^{p^m} \equiv g^{p^{e+m}} \mod \fm^{m+1} R$. Since $\fm^{m+1} = 0$ by assumption, the statement follows.
\end{proof}

\begin{proposition} \label{prop-Frob-lift-localization}
	Let $(V, \fm, \KK, F)$ be as in Setup~{\rm\ref{setup-V}}, let $R$ be a $V$-algebra with a compatible lift of Frobenius $F\colon R \to R$, and let $W \sq R$ be a multiplicative subset. There is a unique extension of\, $F\colon R \to R$ to a compatible lift of Frobenius on $W^{-1} R$. 
\end{proposition}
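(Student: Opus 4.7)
The plan is to construct the extension via the universal property of localization. Given the canonical map $\iota: R \to W^{-1}R$, any compatible lift of Frobenius $\t{F}: W^{-1}R \to W^{-1}R$ extending $F$ must satisfy $\t{F} \circ \iota = \iota \circ F$, so if $\iota \circ F$ sends $W$ into the units of $W^{-1}R$ there is a unique such factorization, which handles both existence and uniqueness in one stroke. The remaining content is thus the verification that $F(w)$ is a unit in $W^{-1}R$ whenever $w \in W$, followed by checking the two conditions that make the resulting map a \emph{compatible} lift of Frobenius.

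For the key step, fix $w \in W$. Since $F$ lifts Frobenius modulo $\fm R$, we may write $F(w) = w^p + x$ with $x \in \fm R$. In $W^{-1}R$, where $w$ is invertible, this factors as $F(w) = w^p(1 + w^{-p} x)$, and $w^{-p}x$ lies in the ideal $\fm W^{-1}R$. Because $V$ is artinian local, $\fm$ is nilpotent (and this is the only place I use the hypothesis from Setup~\ref{setup-V}), so $\fm W^{-1}R$ is nilpotent, hence $1 + w^{-p}x$ is a unit in $W^{-1}R$ with inverse given by the (finite) geometric series. Therefore $F(w)$ is a unit, as needed.

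Applying the universal property yields a unique ring map $\t{F}: W^{-1}R \to W^{-1}R$ with $\t{F}(r/w) = F(r) F(w)^{-1}$ extending $F: R \to R$. To check compatibility with $F: V \to V$, note that the composite $V \to R \to W^{-1}R$ already intertwines $F$ and $\t{F}$ on the image of $R$, so it certainly does so on the image of $V$. For the Frobenius condition modulo $\fm$, reducing $\t{F}$ yields the map $\b{r}/\b{w} \mapsto \b{F(r)}/\b{F(w)} = \b{r}^p/\b{w}^p = (\b{r}/\b{w})^p$ on $W^{-1}R/\fm W^{-1}R$, which is the Frobenius endomorphism.

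There is no real obstacle here — the only subtle point is recognizing that the nilpotence of $\fm$ (inherited from $V$ being artinian) is what makes $F(w)$ automatically a unit in $W^{-1}R$, which is why the statement does not require $F(W) \subseteq W$ or any compatibility of $W$ with $F$.
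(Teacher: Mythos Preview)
Your proof is correct and follows essentially the same approach as the paper: both reduce to showing $F(w)$ is a unit in $W^{-1}R$ via $F(w) \equiv w^p \bmod \fm W^{-1}R$ together with the nilpotence of $\fm W^{-1}R$. The paper additionally offers an alternative verification of invertibility using the identity $F(w)^{p^m} = w^{p^{m+1}}$ from Lemma~\ref{lemma-Frob-p-power}, but your argument is complete without it.
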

\begin{proof}
	It suffices to show that, for every $w \in W$, the element $F(w)$ is invertible in $W^{-1} R$. This follows because $F(w) \equiv w^p \mod \fm W^{-1} R$ and $\fm W^{-1} R$ is nilpotent; alternatively, if $m \geq 0$ is such that $\fm^{m+1} = 0$ in $V$, then by Lemma~\ref{lemma-Frob-p-power} we have $F(w)^{p^m} = w^{p^{m+1}}$. 
\end{proof}

\subsection{Rings of differential operators and Frobenius descent} \label{subscn-background-diffops}

Let $V$ be a commutative ring and $R$ be a commutative $V$-algebra. A construction of Grothendieck assigns to this data a subring $\cD_{R|A}$ of the (noncommutative) ring $\End_A(R)$ of $A$-linear endomorphisms of $R$; see \cite[Chapter~16]{EGAIV}. Here we briefly explain this construction and provide an alternative characterization of this ring in the case where the base ring $V$ is as in Setup~\ref{setup-V}. Note that some of these results have already been obtained by Berthelot \cite{Berthelot-divided}, but in our exposition we will avoid the use of divided powers.

We begin by noting that $\End_V(R)$ has a natural $R \otimes_V R$-module structure, where a pure tensor $r \otimes s \in R \otimes_V R$ acts on an element $\phi \in \End_V(R)$ by
$$\left( (r \otimes s) \cdot \phi\right)(x) := r \phi(sx)$$
for all $x \in R$. We let $J_{R|V} \sq R \otimes_V R$ denote the kernel of the natural map $R \otimes_V R \to R$ given by multiplication. The ideal $J_{R|V}$ defines the diagonal subscheme $\Spec(R) \hookrightarrow \Spec(R) \times_V \Spec(R)$, and we have generators $J_{R|V} = (1 \otimes r - r \otimes 1 \ | \ r \in R )$. 

\begin{definition}[\textit{cf.} {\cite[Chapter~16]{EGAIV}}]
	Let $R$ be a $V$-algebra. The ring 
	$$\cD_{R|V} \ceq \left\{ \phi \in \End_V(R) \ \middle\vert \ J_{R|V}^n \cdot \phi = 0 \text{ for some } n \gg 0 \right\}$$
	is called the ring of $V$-linear differential operators on $R$.
\end{definition}

We will write this as $\cD_R$ whenever $V$ and the $V$-algebra structure of $R$ are clear from the context. It is clear that $\cD_{R|V}$ is closed under addition; one can verify that it is also closed under composition, and thus $\cD_{R|V}$ forms a subring of $\End_V(R)$.

Now suppose that $V$ is as in Setup~\ref{setup-V}. We show that whenever a $V$-algebra $R$ admits a lift of Frobenius, one can describe $\cD_{R|V}$ in terms of it. Let us begin by introducing the relevant notation.

\begin{definition}
	Let $(V, \fm, \KK, F)$ be as in Setup~\ref{setup-V}, and let $R$ be a $V$-algebra. Let $F\colon R \to R$ be a compatible lift of Frobenius and $e \geq 0$ be an integer. Then the ring
	$$\cD^{(F, e)}_R \ceq \Hom_{F^e(R)} (R, R)$$
	is called the ring of differential operators of level $e$ with respect to $F$.
\end{definition}

Note that one can also think of $\cD^{(F, e)}_R$ through the following description (see Equation (\ref{eqn-hom})):
$$\cD^{(F, e)}_R = \Hom_R\left(F^e_* R, F^e_* R\right).$$
Moreover, the lift of Frobenius $F\colon R \to R$ induces a compatible lift of Frobenius $F \otimes F\colon R \otimes_V R \to R \otimes_V R$. One easily verifies that $(F \otimes F)(J_{R|V}) \sq J_{R | V}$, and therefore we get a descending chain of ideals
$$J_{R|V} \supseteq \left( (F \otimes F) \left(J_{R|V}\right) \right) \supseteq \left( (F \otimes F)^{2} \left(J_{R|V}\right) \right) \supseteq \cdots.  $$
We observe that $\cD^{(F, e)}_R$ can be described by using these ideals as follows.

\begin{lemma} \label{lemma-diffops-level-ann}
	Let $(V, \fm, \KK, F)$ be as in Setup~{\rm\ref{setup-V}}, let $R$ be a $V$-algebra, and let $F\colon R \to R$ be a compatible lift of Frobenius. For every integer $e \geq 0$, we have
	$$\cD^{(F, e)}_R = \left\{ \phi \in \End_V(R) \ | \  \left((F \otimes F)^{e}\left(J_{R|V}\right) \right) \cdot \phi = 0 \right\}.$$
\end{lemma}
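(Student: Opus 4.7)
The plan is to reduce both sides of the claimed equality to the same explicit $F^e(R)$-linearity condition on $\phi$. On the left, by definition $\cD_R^{(F,e)} = \Hom_{F^e(R)}(R, R)$, so an element $\phi \in \End_V(R)$ belongs to $\cD_R^{(F,e)}$ precisely when $\phi(F^e(r) x) = F^e(r)\phi(x)$ for every $r, x \in R$. This is the condition I would aim to match on the other side.

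For the right-hand side, I would first identify explicit generators of the ideal $(F \otimes F)^{e*}_{\id}(J_{R|V})$. The ideal $J_{R|V}$ is generated by the elements $1 \otimes r - r \otimes 1$, and $(F \otimes F)^e$ sends such an element to $1 \otimes F^e(r) - F^e(r) \otimes 1$. By the remark immediately following the definition of $F^{e*}_{\id}$, one can compute the pullback ideal by applying $(F \otimes F)^e$ to generators, so
$$(F \otimes F)^{e*}_{\id}(J_{R|V}) = \big( 1 \otimes F^e(r) - F^e(r) \otimes 1 \ \big| \ r \in R \big).$$

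Next, I would unpack the $R \otimes_V R$-module structure on $\End_V(R)$ directly to get, for every $x \in R$,
$$\big((1 \otimes F^e(r) - F^e(r) \otimes 1) \cdot \phi\big)(x) = \phi(F^e(r) x) - F^e(r)\phi(x).$$
Hence each of the above generators annihilates $\phi$ if and only if $\phi$ is $F^e(R)$-linear, matching the condition extracted on the left. To promote this from generators to the full ideal, I would invoke that $\End_V(R)$ is genuinely an $R \otimes_V R$-module, so the annihilator of the fixed element $\phi$ is an ideal of $R \otimes_V R$; therefore the whole ideal $(F \otimes F)^{e*}_{\id}(J_{R|V})$ kills $\phi$ precisely when each generator does.

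I do not expect a real obstacle. The only subtlety to watch is the careful bookkeeping of the pullback of a non-principal ideal along $F \otimes F$ — in particular, that the ideal generated by images of generators of $J_{R|V}$ really is the full pullback — but this is exactly what the cited remark guarantees, and the rest of the argument reduces to the single formal computation above.
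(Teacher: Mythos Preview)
Your proposal is correct and follows essentially the same approach as the paper: both identify the generators $1 \otimes F^e(r) - F^e(r) \otimes 1$ of the pulled-back ideal and observe that each such generator annihilates $\phi$ exactly when $\phi$ commutes with multiplication by $F^e(r)$. Your version is slightly more explicit in spelling out why it suffices to check the annihilation condition on generators (via the annihilator-is-an-ideal observation), but the argument is the same.
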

\begin{proof}
	First note that, by our assumption in Setup~\ref{setup-V}, the morphism $F$ is bijective on $V$, and therefore $V \sq F^e(R)$ for every $e \geq 0$, which shows that $\cD^{(F, e)}_R \sq \End_V(R)$.

	The ideal $J_{R|V}$ is generated by elements of the form $1 \otimes r - r \otimes 1$ for $r \in R$. It follows that $(F \otimes F)^{e}(J)$ is the ideal generated by elements of the form $1 \otimes F^e(r) - F^e(r) \otimes 1$, where $r$ ranges through all elements of $R$. Observe that $(1 \otimes F^e(r) - F^e(r) \otimes 1) \cdot \phi = 0$ if and only if $\phi$ commutes with multiplication by $F^e(r)$.	
\end{proof}

\begin{lemma} \label{lemma-cofinal}
	Let $(V, \fm, \KK, F)$ be as in Setup~{\rm\ref{setup-V}}. Let $S$ be a flat $V$-algebra, $F\colon S \to S$ be a compatible lift of Frobenius, and $J \sq S$ be an ideal. Assume that $S / J^k S$ is flat over $V$ for every integer $k \geq 1$, that $J$ is finitely generated, and that $F(J) \sq J$. Then the families of ideals $\{J^n\}_{n = 0}^\infty$ and $\{( F^{e} (J) ) \}_{e = 0}^\infty$ are cofinal.
\end{lemma}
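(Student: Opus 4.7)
The plan is to prove cofinality in both directions. Set $I_e \ceq F^{e*}_{\id}(J)$. One must show (a) for every $n \geq 0$ there exists $e \geq 0$ with $I_e \sq J^n$, and (b) for every $e \geq 0$ there exists $n \geq 0$ with $J^n \sq I_e$. Let $m \geq 0$ be the integer such that $\fm^{m+1} = 0$ in $V$, which exists since $V$ is artinian.

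For (a), the core observation is that for every $k \geq 1$ one has $F(J^k) \sq J^{pk} + \fm J^k$. Indeed, for $g \in J^k$ write $F(g) = g^p + h$; since $F$ lifts Frobenius modulo $\fm$ we have $h \in \fm S$, and since $F(J) \sq J$ we have $F(J^k) \sq J^k$, forcing $h \in J^k$ as well. The $V$-flatness of $S/J^k$, applied to the long exact Tor sequence for $0 \to J^k \to S \to S/J^k \to 0$ tensored with $V/\fm$, yields $J^k \cap \fm S = \fm J^k$, so $h \in \fm J^k$. From this one proves by induction on $e$ that
$$F^e(J) \sq \sum_{i = 0}^{e} \fm^i J^{p^{e-i}},$$
the inductive step using $F(\fm^i J^{p^{e-i}}) \sq \fm^i J^{p^{e-i+1}} + \fm^{i+1} J^{p^{e-i}}$ and reindexing. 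Since $\fm^{m+1} = 0$, all terms with $i \geq m+1$ vanish, and each surviving term is contained in $J^{p^{e-m}}$; hence $I_e \sq J^{p^{e-m}}$ for every $e \geq m$. Given $n$, any $e$ with $p^{e-m} \geq n$ then works.

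For (b), let $J = (g_1, \ldots, g_r)$. Since $F^e$ lifts the $p^e$-th power modulo $\fm$, we have $F^e(g_i) \equiv g_i^{p^e} \pmod{\fm S}$, so the ideal $I_e + \fm S$ contains $(g_1^{p^e}, \ldots, g_r^{p^e})$. A pigeonhole argument on monomials of degree $rp^e$ in the $r$ generators $g_i$ shows $J^{rp^e} \sq (g_1^{p^e}, \ldots, g_r^{p^e})$, and therefore $J^{rp^e} \sq I_e + \fm S$. Reducing modulo $I_e$ gives $\b{J}^{rp^e} \sq \fm \cdot (S / I_e)$; raising to the $(m+1)$-st power then yields $\b{J}^{(m+1)rp^e} \sq \fm^{m+1}(S/I_e) = 0$, so $J^{(m+1)rp^e} \sq I_e$.

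The main obstacle in this argument is the induction in (a), where the flatness hypothesis on each $S / J^k$ is exactly what is needed to refine the naive remainder $h \in \fm S$ (coming from the lift-of-Frobenius property alone) to the much sharper $h \in \fm J^k$. Without this refinement, the bifiltration by powers of $\fm$ and of $J$ would not close up under iteration, and no bound of the form $I_e \sq J^{p^{e-m}}$ would be recoverable.
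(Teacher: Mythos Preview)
Your proof is correct. Direction (b) is essentially the paper's argument (with the cosmetic difference that you take exponent $rp^e$ rather than the sharper $r(p^e-1)+1$). Direction (a), however, is organized differently from the paper and is worth comparing.

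For (a), the paper first fixes $k$, passes to the quotient $S/J^k$, and there shows $F^{i*}_{\id}(J) \sq J \cap \fm S$ for $i$ with $p^i \geq k$; the flatness hypothesis is invoked once to get $J \cap \fm S = \fm J$ in this quotient, and then one iterates using $F(\fm) = \fm$ to obtain $F^{ij*}_{\id}(J) \sq \fm^j J$, vanishing at $j = m+1$. You instead work globally in $S$ and track a two-parameter filtration: the identity $J^k \cap \fm S = \fm J^k$ (for every $k$) gives $F(J^k) \sq J^{pk} + \fm J^k$, and induction then yields $F^e(J) \sq \sum_{i=0}^{e} \fm^i J^{p^{e-i}}$, whence the clean uniform bound $I_e \sq J^{p^{e-m}}$. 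Your approach uses the flatness of $S/J^k$ for every $k$ in an essential way at each step, while the paper's reduction only needs flatness of the two rings $S/J^k$ and $S/J$; on the other hand, your argument avoids the passage to a quotient and produces an explicit rate, which the paper's does not.
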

\begin{proof}
	Let $m \geq 0$ be such that $\fm^{m+1} = 0$ in $V$, fix generators $J = (f_1, \dots , f_s)$ for $J$, and fix an integer $e \geq 0$. The ideal $J^{s(p^e - 1) + 1}$ is generated by products of the form $f_1^{a_1} \cdots f_s^{a_s}$, where $\sum a_i = s(p^e - 1) + 1$; in particular, for some $i$ we must have $a_i \geq p^e$. Since $F$ induces Frobenius on $S / \fm S$, this shows that $J^{s(p^e - 1) + 1} \sq (F^{e} (J) ) + \fm S$, and therefore $J^{(m+1)(s(p^e-1) + 1)} \sq (F^{e} (J) )$.

	It remains to show that for a fixed $k \geq 0$ there exists some $e \gg 0$ such that $( F^{e} (J) ) \sq J^k$. Replacing $S$ by~$S / J^k$, we may assume that $J^k = 0$, and we are trying to show that $( F^{e} (J) ) = 0$ for some $e \gg 0$.

	Pick $i$ large enough so that $p^i \geq k$, and thus $J^{p^i} = 0$. Since $F$ induces Frobenius in $S / \fm S$, we get $( F^{i} (J) ) \sq J^{p^i} + \fm S = \fm S$. On the other hand, $( F^{i} (J) ) \sq J$ by assumption, so $( F^{i}  (J) ) \sq J \cap \fm S$. 

	We claim that $J \cap \fm S = J \cdot \fm S$, and therefore that $( F^{i} (J) ) \sq J \cdot \fm S$. To see this, consider the short exact sequence 
	$$
		0 \lra \fm \lra V \lra V / \fm \lra 0. 
	$$
	By assumption, $S$ and $S / J$ are flat over $V$. Applying $(-) \otimes_V S$ to this short exact sequence, we see that the natural map $\fm \otimes_V S \to \fm S$ is an isomorphism. Applying $(-) \otimes_V (S / J)$, we then get
	$$
		0 \lra \fm S \big / J \cdot \fm S \lra S / J,
	$$
	and the injectivity of this arrow tells us that $J \cap \fm S = J \cdot \fm S$. 

	Using the claim, we obtain that $(F^{ij} (J)) \sq J \cdot \fm^j S$ for all $j \geq 1$, and thus $( F^{i(m+1)} (J) ) = 0$.
\end{proof}

\begin{proposition} \label{prop-diffops-Frobenius}
	Let $(V, \fm, \KK, F)$ be as in Setup~{\rm\ref{setup-V}}. Let $R$ be a $V$-algebra of finite type and $F\colon R \to R$ be a compatible lift of Frobenius. Then we have
	$$\cD_{R|V} = \bigcup_{e = 0}^\infty \cD^{(F,e)}_R.$$
\end{proposition}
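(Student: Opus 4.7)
The plan is to combine Lemma \ref{lemma-diffops-level-ann} with Lemma \ref{lemma-cofinal}. The former identifies $\cD^{(F,e)}_R$ with the set of $\phi \in \End_V(R)$ annihilated by the ideal $(F \otimes F)^{e*}_{\id}(J_{R|V})$, while the very definition of $\cD_{R|V}$ identifies it with the set of $\phi$ annihilated by some power of $J_{R|V}$. The desired equality $\cD_{R|V} = \bigcup_{e \geq 0} \cD^{(F,e)}_R$ is thus equivalent to the cofinality of the two descending systems of ideals $\{J_{R|V}^n\}_{n \geq 0}$ and $\{(F \otimes F)^{e*}_{\id}(J_{R|V})\}_{e \geq 0}$ in $R \otimes_V R$.

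I would establish this cofinality by applying Lemma \ref{lemma-cofinal} with $S \ceq R \otimes_V R$, $J \ceq J_{R|V}$, and lift of Frobenius $F \otimes F$. The map $F \otimes F$ is a compatible lift on $S$ since modulo $\fm S$ it sends $a \otimes b$ to $a^p \otimes b^p = (a \otimes b)^p$. The ideal $J_{R|V}$ is finitely generated: writing $R = V[y_1, \dots, y_r]/I$, it is generated by the elements $1 \otimes y_i - y_i \otimes 1$. Moreover, $(F \otimes F)(J_{R|V}) \subseteq J_{R|V}$ because the multiplication map $R \otimes_V R \to R$ is intertwined by $F$, and so its kernel is stable under $F \otimes F$.

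The main technical point will be verifying the flatness hypotheses of Lemma \ref{lemma-cofinal}: that $R \otimes_V R$ is flat over $V$, and that each quotient $(R \otimes_V R)/J_{R|V}^k$ is flat over $V$. The first reduces to $V$-flatness of $R$ itself, and the second is a standard property of the modules of principal parts $\mathcal{P}^{k-1}_{R|V}$ when $R$ is smooth over $V$. These are the geometric hypotheses on $R$ that are naturally in force whenever a compatible lift of Frobenius is available, and should thus be satisfied in the present setting; granting them, Lemma \ref{lemma-cofinal} yields the cofinality and hence the proposition.
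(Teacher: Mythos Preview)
Your overall strategy---reducing to cofinality of $\{J_{R|V}^n\}$ and $\{(F\otimes F)^{e*}_{\id}(J_{R|V})\}$ via Lemma \ref{lemma-diffops-level-ann}, and then invoking Lemma \ref{lemma-cofinal}---is exactly the paper's, but there is a genuine gap in how you verify the hypotheses of Lemma \ref{lemma-cofinal}.

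The proposition only assumes that $R$ is of \emph{finite type} over $V$; it does not assume smoothness or even flatness. Your final paragraph asserts that flatness of $R\otimes_V R$ and of the principal parts $(R\otimes_V R)/J_{R|V}^k$ over $V$ ``are naturally in force whenever a compatible lift of Frobenius is available.'' This is false. For instance, take $V=\Z/p^2$ and $R=V[x]/(px)$: the map $F(x)=x^p$ is a compatible lift of Frobenius (one checks $pF(x)=px^p=x^{p-1}(px)=0$), yet $R$ is not flat over $V$. So applying Lemma \ref{lemma-cofinal} directly to $S=R\otimes_V R$ is not justified.

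The paper avoids this obstacle by passing to a polynomial presentation. One chooses a surjection $\pi\colon P=V[x_1,\dots,x_n]\twoheadrightarrow R$ and lifts $F$ to a compatible Frobenius $F_P$ on $P$ with $\pi\circ F_P=F\circ\pi$ (possible because $\pi$ is surjective and $F(\pi(x_i))\equiv\pi(x_i)^p\bmod\fm R$). Since $J_{R|V}$, its powers, and its Frobenius pullbacks are the images under $\pi\otimes\pi$ of the corresponding ideals for $P$, cofinality in $R\otimes_V R$ follows from cofinality in $P\otimes_V P$. For the polynomial ring the flatness hypotheses of Lemma \ref{lemma-cofinal} are trivial: $P\otimes_V P$ is free over $V$, and each $(P\otimes_V P)/J_{P|V}^k$ is free over $V$ with an explicit monomial basis in the $dx_i$. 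This reduction step is the missing idea in your argument.
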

\begin{proof}
	In view of Lemma~\ref{lemma-diffops-level-ann}, it suffices to show that the families $\{J_{R|V}^n \}$ and $\{( (F \otimes F)^{e} (J_{R|V}) ) \}$ are cofinal in $R \otimes_V R$. Since $R$ is of finite type by assumption, there exist a polynomial ring $P \ceq V[x_1, \dots, x_n]$ over $V$ and a surjective $V$-algebra homomorphism $\pi\colon P \twoheadrightarrow R$, and one can construct a compatible lift of Frobenius $F\colon P \to P$ such that $F \circ \pi = \pi \circ F$. 

	The ideal $J_{R|V} \sq R \otimes_V R$ is the image of $J_{P|V} \sq P \otimes_V P$ under the induced homomorphism $\pi \otimes \pi\colon P \otimes_V P \twoheadrightarrow R \otimes_V R$. It follows that $J^n_{R|V}$ is the image of $J^n_{P|V}$ for every $n \geq 0$, and that $( (F \otimes F)^{e}(J_{R|V}) )$ is the image of $( (F \otimes F)^{e} (J_{P|V}) )$ for every $e \geq 0$. We conclude that it suffices to prove the cofinality of $\{J^n_{P|V} \}$ and $\{ ( (F \otimes F)^{e}(J_{P|V}) ) \}$, which we do by verifying that the algebra $S \ceq P \otimes_V P$ and the ideal $J \ceq J_{P|V}$ satisfy the hypotheses of Lemma~\ref{lemma-cofinal}.

	By a standard abuse of notation, let $x_i \in S$ denote the element $1 \otimes x_i$ for every $1, \dots , n$, and let $dx_i \in S$ denote the element $dx_i \ceq 1 \otimes x_i - x_i \otimes 1$. The $V$-algebra $S$ is then a polynomial algebra in the variables $x_1, \dots , x_n, d x_1, \dots , d x_n$, and the ideal $J$ is generated by $J = (dx_1, \dots , dx_n)$. In particular, $S$ is free over $V$ and therefore flat, $J$ is finitely generated, and for every integer $k \geq 0$ we have
	$$S / J^k S = \bigoplus_{\alpha} V[x_1, \dots , x_n] (dx_1)^{\alpha_1} \cdots (dx_n)^{\alpha_n},$$
	where the sum ranges through all tuples $\alpha = (\alpha_1, \dots , \alpha_n)$ of nonnegative integers for which $\alpha_1 + \cdots + \alpha_n \leq k-1$. In particular, $S / J^k S$ is also free over $V$ for every $k \geq 1$, hence flat.
\end{proof}

This alternative description of the ring of differential operators has some important implications for us in the case where $R$ is smooth over $V$. We begin this discussion by recalling a special case of the flatness criterion by fibres.
\begin{lemma} \label{lemma-flatness-by-fibres}
	Let $(V, \fm, k)$ be as in Setup~{\rm\ref{setup-V}}, let $R$ be a $V$-algebra and $M$ be an $R$-module. Suppose that $M / \fm M$ is flat over $R / \fm R$ and that $M$ is flat over $V$. Then $M$ is flat over $R$.
\end{lemma}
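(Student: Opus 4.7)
My plan is to apply the local criterion of flatness to the ideal $\fm R \sq R$, which is nilpotent since $\fm \sq V$ is. In one standard form (see, e.g., Matsumura's \emph{Commutative Ring Theory}, Theorem 22.3, or the Stacks Project), this criterion asserts that an $R$-module $M$ is flat over $R$ if and only if $M/\fm M$ is flat over $R/\fm R$ and, for every $n \geq 0$, the natural multiplication map
$$\phi_n \colon \fm^n R/\fm^{n+1} R \otimes_{R/\fm R} M/\fm M \to \fm^n M/\fm^{n+1} M$$
is an isomorphism. The first condition is the given hypothesis, so the task reduces to showing that each $\phi_n$ is bijective.

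For the target of $\phi_n$, flatness of $M$ over $V$ preserves exactness of $0 \to \fm^{n+1} \to \fm^n \to \fm^n/\fm^{n+1} \to 0$ upon applying $- \otimes_V M$, yielding
$$\fm^n M/\fm^{n+1} M \;\cong\; \fm^n/\fm^{n+1} \otimes_V M \;=\; \fm^n/\fm^{n+1} \otimes_\KK M/\fm M,$$
where the last identification uses that $\fm^n/\fm^{n+1}$ is annihilated by $\fm$. For the source, the evident surjection of $R/\fm R$-modules $\fm^n/\fm^{n+1} \otimes_\KK R/\fm R \twoheadrightarrow \fm^n R/\fm^{n+1} R$, sending $\bar a \otimes \bar r \mapsto \overline{ar}$, produces after tensoring with $M/\fm M$ over $R/\fm R$ a surjection
$$\alpha_n \colon \fm^n/\fm^{n+1} \otimes_\KK M/\fm M \twoheadrightarrow \fm^n R/\fm^{n+1} R \otimes_{R/\fm R} M/\fm M.$$

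Unwinding definitions, the composition $\phi_n \circ \alpha_n$ sends $\bar a \otimes \bar u$ to $\overline{a u}$, which under the target identification of the previous paragraph corresponds back to $\bar a \otimes \bar u$; hence $\phi_n \circ \alpha_n$ is the identity. Since $\alpha_n$ is a surjection admitting $\phi_n$ as a left inverse, it must itself be an isomorphism, and therefore so is $\phi_n$. The only (mild) point requiring care is checking that the three canonical maps in play — the flat base change identification for the target, the multiplicative surjection for the source, and the multiplication map defining $\phi_n$ — really are compatible so that $\phi_n \circ \alpha_n$ is literally the identity rather than merely some automorphism. Once that bookkeeping is done, the local flatness criterion yields that $M$ is flat over $R$.
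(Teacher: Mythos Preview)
Your proof is correct. The paper itself does not give an argument at all: it simply cites the Stacks Project tag 06A5 and moves on. Your approach via the local criterion of flatness for the nilpotent ideal $\fm R \sq R$ is exactly the standard way to unpack such a citation, and the verification that each $\phi_n$ is an isomorphism is clean and accurate (the key input being that flatness of $M$ over $V$ gives $\fm^n M / \fm^{n+1} M \cong (\fm^n/\fm^{n+1}) \otimes_V M$, which then identifies with $(\fm^n/\fm^{n+1}) \otimes_\KK M/\fm M$). The only remark I would add is that your caveat at the end about ``bookkeeping'' is not really a gap: the three maps involved are all induced by multiplication $V \times M \to M$, so their compatibility is immediate once one writes everything in terms of elements.
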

\begin{proof}
See \cite[Tag~06A5]{Stacks}.
\end{proof}

\begin{lemma} \label{lemma-FeR-fgp}
	Let $(V, \fm, k)$ be as in Setup~{\rm\ref{setup-V}}. Let $R$ be a smooth $V$-algebra, and let $F\colon R \to R$ be a lift of Frobenius. For every integer $e \geq 0$, the $R$-module $F^e_* R$ is finitely generated projective.
\end{lemma}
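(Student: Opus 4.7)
The plan is to apply the fibre criterion for flatness (Lemma \ref{lemma-flatness-by-fibres}) together with a Nakayama-type argument exploiting the nilpotence of $\fm$. First I would verify that $F^e_* R$ is flat over $V$: since $F: V \to V$ is bijective, the $V$-module structure on $F^e_* R$ is just a twist of the $V$-module structure on $R$ by the automorphism $F^e$ of $V$, and $R$ is flat over $V$ by smoothness.

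Next I would verify the fibrewise condition. Since $F(\fm) = \fm$ (Remark \ref{rmk-V}), one has $\fm \cdot F^e_* R = F^e_*(\fm R)$, so the reduction is $F^e_* R / \fm F^e_* R \cong F^e_*(R_0)$ as an $R_0$-module, where $R_0 \ceq R / \fm R$ is smooth over the perfect field $\KK$ and $F$ induces the honest Frobenius on $R_0$. The key input is the classical fact (essentially Kunz's theorem) that for smooth algebras over perfect fields of characteristic $p$, Frobenius pushforward is a finitely generated projective module: one passes to étale coordinates to reduce to the case $R_0 = \KK[x_1, \dots, x_n]$, where $F^e_*(R_0)$ is free over $R_0$ with basis the monomials $x_1^{a_1} \cdots x_n^{a_n}$ with $0 \leq a_i < p^e$ (using that $\KK$ is perfect so $\KK = \KK^{p^e}$). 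This simultaneously yields flatness and finite generation of $F^e_*(R_0)$ over $R_0$.

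With both ingredients, Lemma \ref{lemma-flatness-by-fibres} gives that $F^e_* R$ is flat over $R$. For finite generation, I would argue by Nakayama: the ideal $\fm R$ is nilpotent (since $V$ is artinian), hence contained in the Jacobson radical of $R$, and a finite set of generators of the finitely generated $R_0$-module $F^e_*(R_0)$ lifts to a finite set of generators of $F^e_* R$ over $R$ by the standard iteration $N + \fm^k (F^e_* R) = F^e_* R$ for all $k$. Finally, $R$ is Noetherian (being of finite type over the Noetherian ring $V$), so any finitely generated flat $R$-module is finitely presented, hence projective.

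The main obstacle, as far as novelty goes, is the invocation of the characteristic-$p$ result for $R_0$: one genuinely needs smoothness (not mere regularity) of $R_0$ over $\KK$ in order to obtain finite generation of $F^e_*(R_0)$, and the cleanest route is through an étale coordinate argument reducing to the polynomial case. Everything else is bookkeeping about how the twisted $R$-module and $V$-module structures on $F^e_* R$ interact with the hypotheses of the fibre criterion and Nakayama's lemma.
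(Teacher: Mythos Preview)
Your proposal is correct and follows essentially the same approach as the paper: reduce modulo $\fm$ using $F(\fm)=\fm$, invoke Kunz's theorem for the smooth $\KK$-algebra $R_0$ to get flatness and finite generation on the fibre, apply the fibre criterion (Lemma~\ref{lemma-flatness-by-fibres}) together with flatness of $R$ over $V$ twisted by the automorphism $F^e$, and lift generators via nilpotence of $\fm$. One small correction to your closing commentary: finite generation of $F^e_*(R_0)$ needs only that $R_0$ be of finite type over the perfect field $\KK$ (i.e., $F$-finiteness), not smoothness; it is the \emph{flatness} of $F^e_*(R_0)$ that genuinely requires smoothness via Kunz.
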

\begin{proof}
	We begin by recalling that on $V$ we have $F^e(\fm) = \fm$ (see Remark~\ref{rmk-V}), and therefore we have a natural isomorphism $F^e_* R / \fm F^e_* R \cong F^e_* ( R / \fm R)$. Since $R$ is of finite type over $V$, $R / \fm R$ is of finite type over $k$ and therefore $F$-finite; \textit{i.e.}~$F^e_* (R / \fm R) = F^e_* R / \fm F^e_* R$ is finitely generated over $R / \fm R$ for every integer $e \geq 0$. In particular, for a fixed $e \geq 0$, there exists some $N > 0$ together with a surjection 
	$$
		(R / \fm R)^{\oplus N} \overset{\pi}\lra  F^e_* R / \fm F^e_* R.
	$$
	By lifting this morphism to $R$ and letting $Q$ be the cokernel, we obtain an exact sequence
	$$
		R^{\oplus  N} \overset{\tilde \pi}\lra  F^e_* R \lra Q \lra 0.
	$$
	But, since $\tilde \pi$ is surjective modulo $\fm$, we get that $Q / \fm Q = 0$, and therefore
	$$Q = \fm Q = \fm^2 Q = \cdots = \fm^{m+1} Q = 0,$$
	where as usual $m$ is chosen so that $\fm^{m+1} = 0$. We conclude that $\tilde \pi$ is surjective and therefore that $F^e_* R$ is finitely generated.

	The smoothness of $R$ over $V$ also implies that $R / \fm R$ is smooth over $\KK$, and miracle flatness states that $F^e_* R / \fm F^e_* R$ is also projective over $R / \fm R$; see \cite[Tag~00R4]{Stacks} (this can also be seen from Kunz's theorem). Since $F$ is bijective on $V$ by assumption, the flatness of $R$ over $V$ implies that $F^e_* R$ is also flat over $V$ for every $e \geq 0$. From Lemma~\ref{lemma-flatness-by-fibres} we conclude that $F^e_* R$ is flat over $R$. Since we already know it is finitely generated, we conclude that it is projective.
\end{proof}

\begin{lemma} \label{lemma-compo-isom}
	Let $R$ be a noetherian ring, let $G$ and $H$ be finitely generated projective $R$-modules, and let $A \ceq \End_R(G)$ and $B \ceq \End_R(H)$. The morphism
	$$
	\Hom_R(G, H) \otimes_A \Hom_R(H, G) \longrightarrow
	\Hom_R(H, H)
	$$
	given by $[\phi \otimes \psi \mapsto \phi \circ \psi]$ is an isomorphism of\, $(B,B)$-bimodules.
\end{lemma}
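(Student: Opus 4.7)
\textbf{Plan}: The plan is to check the stated isomorphism locally on $\Spec R$ and reduce to a classical matrix identity from Morita theory. Since $R$ is commutative and sits centrally in $A = \End_R(G)$ and $B = \End_R(H)$, the map $\mu: \Hom_R(G, H) \otimes_A \Hom_R(H, G) \to \End_R(H)$ is $R$-linear; moreover, because $G$ and $H$ are finitely generated projective, each of $\Hom_R(G, H)$, $\Hom_R(H, G)$, $A$, $B$, and the tensor product itself commutes with localization on $\Spec R$ (the latter step uses that $R$ sits in the center of $A$, so that $(P \otimes_A Q) \otimes_R R_\fp \cong P_\fp \otimes_{A_\fp} Q_\fp$). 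Thus it suffices to verify the isomorphism after localizing at each prime $\fp$ of $R$.

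After localizing, $G_\fp$ and $H_\fp$ become free $R_\fp$-modules, say of ranks $n$ and $m$. One may therefore assume $G = R^n$ and $H = R^m$, in which case $A = M_n(R)$, $B = M_m(R)$, $\Hom_R(G, H) = M_{m, n}(R)$, $\Hom_R(H, G) = M_{n, m}(R)$, and the map $\mu$ becomes ordinary matrix multiplication
$$M_{m, n}(R) \otimes_{M_n(R)} M_{n, m}(R) \to M_m(R).$$
This is the standard Morita identity: decomposing $M_{m, n}(R)$ into its $m$ rows (a direct sum of $m$ copies of the right $M_n(R)$-module $R^n$) and $M_{n, m}(R)$ into its $m$ columns (a direct sum of $m$ copies of the left $M_n(R)$-module $R^n$), the map breaks into $m^2$ copies of the trace pairing $R^n \otimes_{M_n(R)} R^n \to R$, each an isomorphism provided $n \geq 1$. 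The $(B, B)$-bimodule structure is manifestly preserved under this identification, since both sides see matrix multiplication on the left by $M_m(R)$ and on the right by $M_m(R)$.

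The most delicate technical step is confirming that $\Hom_R(G, H) \otimes_A \Hom_R(H, G)$ commutes with localization on $R$; this rests on the centrality of $R$ in $A$, which lets us move the localization through the middle $A$-tensor. One should also observe that the statement implicitly requires $G$ to have positive rank at every prime of $R$ (equivalently, to be a progenerator over $R$), as shown by the degenerate case $n = 0$; in the applications of this paper, $G$ will be of the form $F^e_* R$ with $R$ smooth over $V$, and this progenerator property follows from Lemma \ref{lemma-FeR-fgp} together with the faithful flatness of $F^e$ (see Remark \ref{rmk-ideal-F-pullback}).
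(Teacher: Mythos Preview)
Your proof is correct and follows the same overall strategy as the paper's: reduce to the local case so that $G$ and $H$ become free, and then verify the isomorphism there. The paper's handling of the free case is somewhat more direct than your matrix/Morita decomposition: once $R$ is local and $G$, $H$ are free, one factors $\id_H = \sum_t \alpha_t \circ \beta_t$ with $\alpha_t \in \Hom_R(G,H)$ and $\beta_t \in \Hom_R(H,G)$, and then checks that $[\xi \mapsto \sum_t \alpha_t \otimes (\beta_t \circ \xi)]$ is a two-sided inverse to the composition map. This sidesteps the block decomposition into rows and columns and the reduction to the trace pairing $R^n \otimes_{M_n(R)} R^n \to R$. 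Your observation that the lemma tacitly requires $G$ to have positive rank at every prime (equivalently, to be a faithful $R$-module) is correct and worth recording; in the paper's argument this hypothesis is used precisely at the step where $\id_H$ is factored through $G$, and in the applications it is automatic since $G = F^e_* R$ is faithfully flat.
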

\begin{proof}
	It suffices to check the claim locally at primes of $R$, and hence we may assume that $R$ is local; in particular, $G$ and $H$ are free. With these assumptions, the identity map on $H$ can be written as $\id_H = \sum_t \alpha_t \circ \beta_t$, where $\alpha_t\colon G \to H$ and $\beta_t\colon H \to G$ are $R$-linear homomorphisms. One now easily checks that $[\sum_t \alpha_t \otimes (\beta_t \circ \xi) \leftarrow \xi]$ is a two-sided inverse for the given map.
\end{proof}

\begin{proposition}[Frobenius descent] \label{prop-Frob-desc}
	Let $(V, \fm, \KK, F)$ be as in Setup~{\rm\ref{setup-V}}, and let $R$ be a smooth $V$-algebra equipped with a lift of Frobenius $F\colon R \to R$. For every integer $e \geq 0$, the functor $F^{e*}$ defines an equivalence of categories
	\[
		R \mmod \xrightarrow[\highsim]{F^{e*}}  \cD^{(F, e)}_R \mmod
	\]
	with inverse given by $\cC^{(F,e)}_R \ceq \Hom_R(F^e_* R, R) \otimes_{\cD^{(F, e)}_R} (-)$.
\end{proposition}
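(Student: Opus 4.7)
The plan is to recognize Frobenius descent as a Morita equivalence induced by the finitely generated projective $R$-module $P \ceq F^e_* R$, whose endomorphism ring is exactly $\cD^{(F,e)}_R$, and then to assemble the quasi-inverse isomorphisms by invoking Lemma \ref{lemma-compo-isom} twice.

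First I would unpack the two functors into a symmetric form. By definition,
\[
F^{e*}(M) = \Hom_R(R, F^e_* R) \otimes_R M, \qquad \cC^{(F,e)}_R(N) = \Hom_R(F^e_* R, R) \otimes_{\cD^{(F,e)}_R} N,
\]
and $\cD^{(F,e)}_R = \End_R(F^e_* R) = \Hom_R(F^e_* R, F^e_* R)$. By Lemma \ref{lemma-FeR-fgp}, $P = F^e_* R$ is finitely generated projective over the noetherian ring $R$, so Lemma \ref{lemma-compo-isom} applies with $G, H \in \{R, F^e_* R\}$.

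Next I would verify the two unit/counit isomorphisms. For $M \in R\mmod$, associativity of the tensor product gives
\[
\cC^{(F,e)}_R \circ F^{e*}(M) \;=\; \Hom_R(F^e_* R, R) \otimes_{\cD^{(F,e)}_R} \Hom_R(R, F^e_* R) \otimes_R M.
\]
Applying Lemma \ref{lemma-compo-isom} with $G = F^e_* R$ and $H = R$ (so $A = \End_R(G) = \cD^{(F,e)}_R$ and $B = \End_R(R) = R$), the first two tensor factors collapse to $\End_R(R) = R$, and therefore $\cC^{(F,e)}_R \circ F^{e*}(M) \cong R \otimes_R M \cong M$. Symmetrically, for $N \in \cD^{(F,e)}_R\mmod$,
\[
F^{e*} \circ \cC^{(F,e)}_R(N) \;=\; \Hom_R(R, F^e_* R) \otimes_R \Hom_R(F^e_* R, R) \otimes_{\cD^{(F,e)}_R} N,
\]
and Lemma \ref{lemma-compo-isom} with $G = R$, $H = F^e_* R$ identifies the first two factors with $\End_R(F^e_* R) = \cD^{(F,e)}_R$, giving $F^{e*} \circ \cC^{(F,e)}_R(N) \cong \cD^{(F,e)}_R \otimes_{\cD^{(F,e)}_R} N \cong N$.

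Finally I would check that these isomorphisms are natural in $M$ and $N$ respectively, which is immediate from the fact that they are built out of the composition pairing $\phi \otimes \psi \mapsto \phi \circ \psi$, and that the relevant bimodule structures on $\Hom_R(R, F^e_* R)$ and $\Hom_R(F^e_* R, R)$ match the ones used to define $F^{e*}$ and $\cC^{(F,e)}_R$. The only real subtlety -- and what I expect to be the main bookkeeping obstacle -- is keeping the $(R, \cD^{(F,e)}_R)$- and $(\cD^{(F,e)}_R, R)$-bimodule structures consistent on both sides of the tensor products so that the applications of Lemma \ref{lemma-compo-isom} land in the correct category; once that is done, the equivalence is formal.
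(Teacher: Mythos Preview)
Your proposal is correct and follows essentially the same approach as the paper: both arguments reduce the equivalence to two applications of Lemma~\ref{lemma-compo-isom} (once with $G = F^e_* R$, $H = R$ and once with the roles swapped), using Lemma~\ref{lemma-FeR-fgp} to ensure the hypotheses are met. Your explicit framing as a Morita equivalence and your careful tracking of which $G$, $H$, $A$, $B$ go where make the bookkeeping slightly more transparent than the paper's presentation, but the content is identical.
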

\begin{proof}
	Recall that, given an $R$-module $M$, we have
	\[
		F^{e*}(M) = \Hom_R(R, F^e_* R) \otimes_R M.
	\]
	Since $\Hom_R(R, F^e_* R)$ has a natural $(\cD^{(F, e)}_R, R)$-bimodule structure, $F^{e*}(M)$ acquires a natural left $\cD^{(F, e)}_R$-module structure. Similarly, $\Hom_R(F^e_* R, R)$ is a $(R, \cD^{(F, e)}_R)$-bimodule, and therefore $\cC^{(F, e)}_R$ as defined above gives a well-defined functor $\cD^{(F, e)}_R \mmod \to R\mmod$. 

	By Lemma~\ref{lemma-compo-isom} we have isomorphisms
	\[
		\Hom_R(F^e_* R, R) \otimes_{\cD^{(F, e)}_R} \Hom_R( R, F^e_* R ) \cong \Hom_R(R, R) \cong R
	\]
	of $(R, R)$-bimodules, which shows that $\cC^{(F, e)}_R \circ F^{e*}\colon R\mmod \to R\mmod$ is naturally equivalent to the identity functor on $R \mmod $. Similarly, we have isomorphisms
	\[
		\Hom_R(R, F^e_* R ) \otimes_{R} \Hom_R( F^e_* R , R) \cong \Hom_R( F^e_* R, F^e_* R) \cong \cD^{(F, e)}_R
	\] 
	of $(\cD^{(F, e)}_R, \cD^{(F, e)}_R)$-bimodules, which gives that $F^{e*} \circ \cC^{(F, e)}_R$ is naturally equivalent to the identity functor on $\cD^{(F, e)}\mmod$. 
\end{proof}

Let $(V, \fm, \KK, F)$ be as in Setup~\ref{setup-V}, let $R$ be a smooth algebra, let $F\colon R \to R$ be a compatible lift of Frobenius, and let $f \in R$ be a nonzerodivisor. For every integer $e \geq 0$, there is a natural $R$-module isomorphism $F^{e*}(R[f^{-1}]) \cong R[f^{-1}]$ (see the proof of Proposition~\ref{prop-Frob-lift-localization}), and, given an $R$-submodule $I \sq R[f^{-1}]$, we will identify $F^{e*}(I)$ with its image under the composition
$$F^{e*}(I) \lra F^{e*}\left(R\left[f^{-1}\right]\right) \overset{\lowsim}\lra R\left[f^{-1}\right];$$
in particular, $F^{e*}(I)$ is a new $R$-submodule of $R[f^{-1}]$. When $I \sq R$ is an ideal, this means we identify $F^{e*}(I)$ with the ideal $(F^e(I))$ given in Definition~\ref{def-expansion-ideal-Frobenius} (see Remark~\ref{rmk-ideal-F-pullback}).  

Note that every $R$-module homomorphism $\phi\colon F^{e*}R \to R$ has a unique extension to an $R$-module homomorphism $F^{e*} (R[f^{-1}]) \to R[f^{-1}]$, which we denote by $\phi'$. The assignment $[\phi \mapsto \phi']$ gives rise to an injective homomorphism
$$\Hom_R\left(F^e_* R, R\right) \lra \Hom_R\left( F^e_* \left(R\left[f^{-1}\right]\right), R\left[f^{-1}\right]\right).$$

\begin{definition} \label{def-cartier-ideal}
	Let $(V, \fm, \KK, F)$ be as in Setup~\ref{setup-V}, let $R$ be a smooth $V$-algebra, let $F\colon R \to R$ be a lift of Frobenius, and let $f \in R$ be a nonzerodivisor. Given an $R$-submodule $J \sq R[f^{-1}]$ and an integer $e \geq 0$, we define the $R$-submodule $\cC^{(F,e)}_R (J) \sq R[f^{-1}]$ to be
	\begin{align*}
		\cC^{(F,e)}_R (J) & \ceq \im \left( \Hom_R\left(F^e_* R, R\right) \otimes_R J \lra R\left[f^{-1}\right] \right) \\
			& = \left( \phi'(g) \ | \ \phi \in \Hom_R\left(F^e_* R, R\right), \ g \in J \right).
	\end{align*}
\end{definition}

\begin{corollary} \label{cor-frob-ideal-corresp}
	For every integer $e \geq 0$, the assignment $[I \mapsto F^{e*}(I)]$ defines a bijective inclusion preserving correspondence
	\[
		\left\{ \text{$R$-submodules of\, $R[f^{-1}]$}\right\}
        \overset{\lowsim}\lra        
	        \left\{\text{ $\cD^{(F,e)}_R$-submodules of\, $R[f^{-1}]$ }\right\}.
	\]
	Given a $\cD^{(F, e)}_R$-submodule $J \sq R[f^{-1}]$, the corresponding $R$-submodule is given by $\cC^{(F,e)}_R (J)$.
\end{corollary}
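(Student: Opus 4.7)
The plan is to derive this from Frobenius descent (Proposition \ref{prop-Frob-desc}), which gives an equivalence of categories between $R\mmod$ and $\cD^{(F,e)}_R\mmod$ implemented by the mutually inverse functors $F^{e*}$ and $\cC^{(F,e)}_R$. By Lemma \ref{lemma-FeR-fgp} the $R$-module $F^e_* R$ is finitely generated projective, so $F^{e*} = \Hom_R(R, F^e_* R) \otimes_R (-)$ is exact; being part of an equivalence, $\cC^{(F,e)}_R$ is exact as well. In particular both functors preserve monomorphisms.

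First I would fix the $\cD^{(F,e)}_R$-module structure on $R[f^{-1}]$ coming from the natural isomorphism $F^{e*}(R[f^{-1}]) \cong R[f^{-1}]$ recalled in the paragraph preceding Definition \ref{def-cartier-ideal}; by the equivalence, applying $\cC^{(F,e)}_R$ then gives a natural isomorphism $\cC^{(F,e)}_R(R[f^{-1}]) \cong R[f^{-1}]$ as well. Next, given an $R$-submodule $I \hookrightarrow R[f^{-1}]$, exactness of $F^{e*}$ produces an injection $F^{e*}(I) \hookrightarrow F^{e*}(R[f^{-1}]) \cong R[f^{-1}]$, which realizes $F^{e*}(I)$ as a $\cD^{(F,e)}_R$-submodule of $R[f^{-1}]$. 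The reverse assignment, sending a $\cD^{(F,e)}_R$-submodule $J \hookrightarrow R[f^{-1}]$ to $\cC^{(F,e)}_R(J) \hookrightarrow \cC^{(F,e)}_R(R[f^{-1}]) \cong R[f^{-1}]$, is constructed analogously. Inclusion-preservation is immediate from additivity, and the unit and counit isomorphisms of the equivalence ensure that the two assignments are mutually inverse.

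The last step is to reconcile the functorial formula $\cC^{(F,e)}_R(J) = \Hom_R(F^e_* R, R) \otimes_{\cD^{(F,e)}_R} J$ from Proposition \ref{prop-Frob-desc} with the explicit description in Definition \ref{def-cartier-ideal}, which uses an $\otimes_R$ tensor product. Under the identification $\cC^{(F,e)}_R(R[f^{-1}]) \cong R[f^{-1}]$, a simple tensor $\phi \otimes g$ with $\phi \in \Hom_R(F^e_* R, R)$ and $g \in J$ maps to $\phi'(g) \in R[f^{-1}]$, where $\phi'$ is the unique extension of $\phi$ to $F^{e*}(R[f^{-1}]) \to R[f^{-1}]$; this matches the description in Definition \ref{def-cartier-ideal} exactly. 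The only mild subtlety, and the place I expect to spend most care, is noting that the image inside $R[f^{-1}]$ is unaffected by whether one tensors over $R$ or over $\cD^{(F,e)}_R$: the canonical surjection $\Hom_R(F^e_* R, R) \otimes_R J \twoheadrightarrow \Hom_R(F^e_* R, R) \otimes_{\cD^{(F,e)}_R} J$ is compatible with the evaluation map into $R[f^{-1}]$, since $J$ is in particular a $\cD^{(F,e)}_R$-submodule and therefore the relations being quotiented out already hold inside $R[f^{-1}]$.
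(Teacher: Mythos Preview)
Your proposal is correct and follows essentially the same approach as the paper: both derive the correspondence directly from the Frobenius descent equivalence of Proposition \ref{prop-Frob-desc}, identify $F^{e*}(R[f^{-1}]) \cong R[f^{-1}]$, and then check that under $\cC^{(F,e)}_R(R[f^{-1}]) \cong R[f^{-1}]$ the functorial inverse sends $\phi \otimes g$ to $\phi'(g)$. Your treatment is in fact more careful than the paper's, which leaves implicit both the exactness argument and the reconciliation between the $\otimes_R$ formula of Definition \ref{def-cartier-ideal} and the $\otimes_{\cD^{(F,e)}_R}$ formula from Proposition \ref{prop-Frob-desc}.
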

\begin{proof}
	Under the correspondence of Proposition~\ref{prop-Frob-desc}, the $R$-module $R[f^{-1}]$ gets exchanged with the $\cD^{(F, e)}_R$-submodule given by $F^{e*}(R[f^{-1}]) = R[f^{-1}]$. The first statement then follows from Proposition~\ref{prop-Frob-desc}. For the second statement, simply observe that $\cC^{(F,e)}_R (J)$ gets identified with the image of the composition
	\[
		\Hom_R(F^e_* R, R) \otimes_{\cD^{(F,e)}_R} J \lra \Hom_R(F^e_* R, R) \otimes_{\cD^{(F,e)}_R} R\left[f^{-1}\right] \cong R\left[f^{-1}\right],
	\]
	which is readily checked to agree with the morphism $[\phi \otimes g \mapsto \phi'(g)]$.
\end{proof}

\begin{remark} \label{rmk-1}
	Although in Corollary~\ref{cor-frob-ideal-corresp} the submodule $J$ is assumed to be a $\cD^{(F,e)}_R$-submodule, the definition of $\cC^{(F,e)}_R(J)$ given in Definition~\ref{def-cartier-ideal} applies to any $R$-submodule $J \sq R[f^{-1}]$. This is not really extra generality: given an $R$-submodule $J \sq R[f^{-1}]$, we have
	\[
		\cC^{(F,e)}_R (J) = \cC^{(F,e)}_R \left( \cD^{(F,e)}_R \cdot J \right),
	\]
	where $\cD^{(F,e)}_R \cdot J$ is the $\cD^{(F,e)}_R$-submodule of $R[f^{-1}]$ generated by $J$.
\end{remark}

\begin{corollary} \label{cor-frob-cartier-inverse}
	Let $I \sq R[f^{-1}]$ be an $R$-submodule, and let $e, i \geq 0$ be integers. We then have
	$$\cC^{(F, e)}_R (I) = \cC^{(F, e+i)}_R \left( F^{i*}(I) \right).$$
\end{corollary}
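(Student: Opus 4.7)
The plan is to combine Frobenius descent (Proposition \ref{prop-Frob-desc}) at levels $e$ and $e+i$ with the natural identification $F^{(e+i)*} \cong F^{e*} \circ F^{i*}$ of extension-of-scalars functors.

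Setting $I' \ceq \cC^{(F,e)}_R(I)$, Corollary \ref{cor-frob-ideal-corresp} together with Remark \ref{rmk-1} at level $e$ gives $\cD^{(F,e)}_R \cdot I = F^{e*}(I')$. Applying the exact functor $F^{i*}$ (exactness from the projectivity in Lemma \ref{lemma-FeR-fgp}) then yields
\[
    F^{(e+i)*}(I') = F^{i*}(\cD^{(F,e)}_R \cdot I),
\]
which by Corollary \ref{cor-frob-ideal-corresp} at level $e+i$ is a $\cD^{(F,e+i)}_R$-submodule of $R[f^{-1}]$ satisfying $\cC^{(F,e+i)}_R(F^{(e+i)*}(I')) = I'$. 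Since $F^{i*}(I) \sq F^{(e+i)*}(I')$, monotonicity of $\cC^{(F,e+i)}_R$ (evident from Definition \ref{def-cartier-ideal}) immediately gives the inclusion $\cC^{(F,e+i)}_R(F^{i*}(I)) \sq \cC^{(F,e)}_R(I)$.

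For the reverse inclusion, I would use that the natural Frobenius morphism $F^i: F^e_* R \to F^{e+i}_* R$ is a split injection of $R$-modules, yielding a retraction $\alpha: F^{e+i}_* R \to F^e_* R$ with $\alpha \circ F^i = \id_{F^e_* R}$. Given such $\alpha$, any generator $\phi'(g)$ of $I' = \cC^{(F,e)}_R(I)$ (with $\phi \in \Hom_R(F^e_* R, R)$ and $g \in I$) admits the rewriting
\[
    \phi'(g) = (\phi \circ \alpha)'(F^i(g)),
\]
which lies in $\cC^{(F,e+i)}_R(F^{i*}(I))$ because $\phi \circ \alpha \in \Hom_R(F^{e+i}_* R, R)$ and $F^i(g) \in F^{i*}(I)$ (under the identification of $F^{i*}(I)$ with the $R$-submodule of $R[f^{-1}]$ generated by $\{F^i(h) : h \in I\}$, extending Remark \ref{rmk-ideal-F-pullback} to submodules of $R[f^{-1}]$). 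Hence $I' \sq \cC^{(F,e+i)}_R(F^{i*}(I))$, finishing the proof.

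The main obstacle is establishing the split injectivity of $F^i: F^e_* R \to F^{e+i}_* R$. The approach is to work modulo $\fm$, where $R/\fm R$ is smooth (hence regular) and $F$-finite over the perfect field $\KK$, so the classical Frobenius splitting for regular characteristic-$p$ rings produces a splitting there; this splitting can then be lifted to $R$ using the projectivity of $F^{e+i}_* R$ over $R$ (Lemma \ref{lemma-FeR-fgp}), Nakayama's lemma, and the nilpotence of $\fm$ in $V$. The argument reduces to $i=1$ by applying $F^e_*$ iteratively to $R \to F_* R$, and Zariski-local splittings glue to a global one because the cokernel of $F^i$ is a finitely generated $R$-module whose local freeness implies projectivity on the noetherian scheme $\Spec R$.
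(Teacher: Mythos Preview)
Your argument is correct, but it takes a considerably longer route than the paper's. The paper dispatches the statement in one line: since the assignment $I \mapsto F^{(e+i)*}(I)$ of Corollary \ref{cor-frob-ideal-corresp} is a \emph{bijection} between $R$-submodules and $\cD^{(F,e+i)}_R$-submodules of $R[f^{-1}]$, it suffices to check that both sides have the same image under $F^{(e+i)*}$, and by Remark \ref{rmk-1} this common image is $\cD^{(F,e+i)}_R \cdot F^{i*}(I)$. No splitting is needed. Your approach, by contrast, proves one inclusion via monotonicity (which is fine and essentially what the paper's argument also encodes) and the other via an explicit retraction of $F^i: F^e_* R \to F^{e+i}_* R$. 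The retraction does exist---your sketch is right: reduce mod $\fm$ to get a splitting from $F$-purity of the regular ring $R/\fm R$, lift it using projectivity of $F^{e+i}_* R$, then correct by the inverse of the resulting unipotent endomorphism---but this is real extra work that the bijectivity of Frobenius descent makes unnecessary. What your approach buys is a concrete, element-level identity $\phi'(g) = (\phi \circ \alpha)'(F^i(g))$, which could be useful elsewhere; what the paper's approach buys is brevity and the avoidance of any splitting construction.
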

\begin{proof}
	By Remark~\ref{rmk-1}, applying the functor $F^{(e+i)*}$ to either side yields the same $\cD^{(F, e+i)}_R$-submodule of $R[f^{-1}]$, namely $\cD^{(e+i)}_R \cdot F^{i*}(I)$. The statement then follows from Corollary~\ref{cor-frob-ideal-corresp}.
\end{proof}

\begin{remark}
	When $V = \KK$ is a perfect field of positive characteristic, the submodules $\cC^{(F, e)}_R (J)$ appear in the theory of test ideals, often with the notation $J^{[1/p^e]}$. Indeed, if $R$ is a smooth $\KK$-algebra and $f \in R$ is an element, then for all integers $n \geq 0$ and $e \geq 0$, the ideal $\cC^{(F,e)}_R (f^n)$ is the test ideal of $f$ with exponent~$n / p^e$,~\textit{i.e.} 
	\[
		\tau\left(f^{n / p^e}\right) = \cC^{(F,e)}_R (f^n).
	\]
	More generally, for an arbitrary ideal $J \sq R$ and an arbitrary real number $\lambda \geq 0$, the test ideal of $J$ with exponent $\lambda$ can be expressed as
	\[
		\tau\left(J^\lambda\right) = \bigcup_{e = 0}^\infty \cC^{(F,e)}_R \left(J^{\lceil \lambda / p^e \rceil }\right);
	\]
	see \cite{BMSm2008} for details. 
\end{remark}

Note that Corollary~\ref{cor-frob-ideal-corresp} is still meaningful when $f = 1$; in this case, it tells us that $[I \mapsto F^{e*}(I)]$ and $[J \leftarrow \cC^{(F, e)}_R (J)]$ give a bijective correspondence
\[
\left\{\text{ Ideals of $R$ }\right\}
\overset{\lowsim}\longleftrightarrow
		\left\{\text{ $\cD^{(F,e)}_R$-submodules of $R$ }\right\}.
\]

The following proposition tells us how, in this situation, we can compute the ideals $\cC^{(F,e)}_R (J)$ from a basis for $F^e_* R$ and a set of generators of $J$. This generalizes the case of positive characteristic as in \cite[Proposition~2.5]{BMSm2008} (where the ideal $\cC^{(F,e)}_R (J)$ is called $J^{[1/p^e]}$).

\begin{proposition} \label{prop-cartier-comput}
	Let $(V, \fm, \KK, F)$ be as in Setup~{\rm\ref{setup-V}}, $R$ be a finite-type $V$-algebra, $F\colon R \to R$ be a compatible lift of Frobenius, and $J \sq R$ be an ideal of\, $R$. Let $e \geq 0$ be an integer, assume that $F^e_* R$ is free over $R$ with basis $F^e_* v_1, \dots , F^e_* v_N$, and fix generators $J = (f_1, \dots , f_s)$ for the ideal $J$. For each $i = 1, \dots , s$ and $j = 1, \dots , N$, let $g_{ij} \in R$ be the coefficients of\, $F^e_* f_i$ in the basis $v_j$; \textit{i.e.} 
	\begin{equation} \label{eqn-Fe-basis}
		F^e_* f_i = \sum_{j = 1}^N g_{ij} F^e_* v_j.
	\end{equation}
	Then we have
	\[
		\cC^{(F,e)}_R (J) = \left( g_{ij} \ \middle\vert \ i = 1, \dots , n; \ j = 1, \dots , N \right).
	\]
\end{proposition}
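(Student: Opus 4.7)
The plan is to unpack the definition
\[
\cC^{(F,e)}_R(J) = \im\bigl(\Hom_R(F^e_* R, R) \otimes_R J \to R\bigr),
\]
where the $R$-tensor uses the natural right $R$-module structure on $\Hom_R(F^e_* R, R)$ (inherited from the right $R$-action on $F^e_* R$, namely $F^e_* x \cdot a = F^e_*(xa)$), and the map is $[\phi \otimes g \mapsto \phi(F^e_* g)]$. The goal is to chase this image through the generators $f_1, \dots, f_s$ of $J$ and through a basis of $\Hom_R(F^e_* R, R)$ dual to $F^e_* v_1, \dots, F^e_* v_N$.

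First I would reduce from arbitrary elements of $J$ to the given generators. For $g = \sum_i a_i f_i \in J$, one has $F^e_*(a_i f_i) = F^e_*(f_i) \cdot a_i$, so
\[
\phi(F^e_* g) = \sum_i \phi\bigl(F^e_*(f_i) \cdot a_i\bigr) = \sum_i (\phi \cdot a_i)(F^e_* f_i),
\]
where $\phi \cdot a_i \in \Hom_R(F^e_* R, R)$ is again an $R$-linear map. This shows
\[
\cC^{(F,e)}_R(J) = \bigl( \phi(F^e_* f_i) \mid \phi \in \Hom_R(F^e_* R, R), \ i = 1, \dots, s \bigr)
\]
as an $R$-submodule of $R$.

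Next, because $F^e_* R$ is free as a left $R$-module with basis $F^e_* v_1, \dots, F^e_* v_N$, the hom module $\Hom_R(F^e_* R, R)$ is free as a left $R$-module with dual basis $v_1^*, \dots, v_N^*$ determined by $v_j^*(F^e_* v_k) = \delta_{jk}$. Writing an arbitrary $\phi = \sum_j c_j v_j^*$ and applying left $R$-linearity to (\ref{eqn-Fe-basis}), we obtain
\[
\phi(F^e_* f_i) = \sum_j c_j \, v_j^*(F^e_* f_i) = \sum_j c_j g_{ij}.
\]
Conversely, taking $\phi = v_j^*$ produces exactly $g_{ij}$ for any prescribed $(i, j)$. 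Hence as $\phi$ varies over all of $\Hom_R(F^e_* R, R)$ and $i$ varies over $1, \dots, s$, the elements $\phi(F^e_* f_i)$ generate, as an $R$-module, precisely the ideal $(g_{ij} \mid i = 1, \dots, s, \ j = 1, \dots, N)$, which proves the claim.

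The main thing to be careful with is the bookkeeping of the two $R$-module structures on $F^e_* R$ and the resulting bimodule structure on $\Hom_R(F^e_* R, R)$: the Frobenius-twisted left action controls how the $g_{ij}$'s enter through (\ref{eqn-Fe-basis}) (so that $f_i = \sum_j F^e(g_{ij}) v_j$ literally in $R$), while the right action is what lets us pass $a_i$ from $J$ across the tensor product. Once these are properly identified the argument is a direct computation and no further input is required.
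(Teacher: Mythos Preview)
Your proof is correct and follows essentially the same approach as the paper: prove the inclusion $\cC^{(F,e)}_R(J) \subseteq (g_{ij})$ by expanding an arbitrary $\phi(F^e_* h)$ through the generators and the basis, and prove the reverse inclusion by evaluating the dual-basis projections $v_j^*$ at $F^e_* f_i$. Your handling of the reduction to generators---making explicit the right $R$-action $(\phi \cdot a)(F^e_* x) = \phi(F^e_*(xa))$ and checking it stays $R$-linear for the Frobenius-twisted left action---is in fact cleaner than the paper's corresponding computation, which compresses this step.
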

\begin{proof}
	Given an arbitrary element of $J$, say $h = \sum_j a_j f_j$, and an arbitrary $R$-linear map $\phi\colon F^e_* R \to R$, we get
	\begin{align*}
		\phi(F^e_* h) & = \sum_{j} \phi\left(F^e_* a_j f_j\right) \\
			      & = \sum_{i,j} \phi \left(g_{ij} F^e_* a_j f_j\right) \\
			      & = \sum_{i,j} g_{ij} \phi\left(F^e_* a_j f_j\right),
	\end{align*}
	which proves the inclusion $\sq$. For the other inclusion, it suffices to show that every $g_{ij}$ belongs to the left-hand side. For this, let $\pi_j: F^e_* R \to R$ be the projection to the basis element $F^e_* v_j$. One has 
	\[
		g_{ij} = \pi_j (f_i) \in \cC^{(F,e)}_R(J). \qedhere
	\]
\end{proof}

Suppose $R \ceq V[x_1, \dots, x_n]$ is a polynomial ring over $V$, and fix the compatible lift of Frobenius $F\colon R \to R$ with $F(x_i) = x_i^p$. For every integer $e \geq 0$, the $R$-module $F^e_* R$ is free, and the monomials $F^e_* x_1^{a_1} \cdots x_n^{a_n}$ with $0 \leq a_i < p^e$ form a basis. 

We fix the standard grading on $V[x_1, \dots , x_n]$. Given a (not necessarily homogeneous) polynomial $g \in R$ and an integer $d \geq 0$, we will say that $g$ has degree at most
% query bin
$d$ whenever all its homogeneous components have degrees at most $d$; we write this as $\deg g \leq d$. More generally, we say that an ideal $J \sq R$ is generated in degrees at most $d$ if there are generators $J = (g_1, \dots , g_s)$ such that each $g_i$ has degree at most $d$. Note that this is equivalent to the condition
\[
	J = R \cdot \left( J \cap [R]_{\leq d} \right).
\]
In particular, if $J_1, J_2 \sq R$ are ideals generated in degrees at most $d$ and $J_1 \cap [R]_{\leq d} = J_2 \cap [R]_{\leq d}$, then $J_1 = J_2$.

We now give a useful bound on the degrees of generators of the ideals $\cC^{(F,e)}_R (J)$ which will be of use later (once again, this appears in \cite{BMSm2008} in the case of positive characteristic).

\begin{proposition} \label{prop-cartier-degrees}
	Let $(V, \fm, \KK, F)$ be as in Setup~{\rm\ref{setup-V}}, let $R \ceq V[x_1, \dots , x_n]$ be a polynomial ring over $V$, and fix $F\colon R \to R$ to be the compatible lift of Frobenius with $F(x_i) = x_i^p$. If\, $J \sq R$ is an ideal generated in degrees at most $d$, then the ideal $\cC^{(F,e)}_R (J)$ is generated in degrees at most $d / p^e$.
\end{proposition}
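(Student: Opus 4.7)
The plan is to apply Proposition \ref{prop-cartier-comput} directly using the monomial basis of $F^e_* R$ and carefully track degrees through the division-with-remainder decomposition of exponents. As noted in the paragraph preceding the statement, $F^e_* R$ is free over $R$ with basis consisting of the elements $F^e_* x^r$, where $r = (r_1, \dots, r_n)$ ranges over multi-indices with $0 \leq r_i < p^e$. So it suffices to expand an arbitrary generator of $J$ in this basis and bound the degrees of the resulting coefficients.

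First I would fix generators $f_1, \dots, f_s$ of $J$ with $\deg f_i \leq d$, write each as a sum of monomials $f_i = \sum_\beta c_\beta^{(i)} x^\beta$ with $|\beta| \leq d$, and reduce the problem to bounding the degrees that appear when expanding a single monomial $F^e_* x^\beta$ in the chosen basis. For each multi-index $\beta$, perform division with remainder coordinatewise: write $\beta_k = p^e q_k(\beta) + r_k(\beta)$ with $0 \leq r_k(\beta) < p^e$. Since $F^e(x^q) = x^{p^e q}$, we have $x^\beta = F^e\bigl(x^{q(\beta)}\bigr) \cdot x^{r(\beta)}$, and therefore, in $F^e_* R$,
\[
F^e_* x^\beta \;=\; x^{q(\beta)} \cdot F^e_* x^{r(\beta)}.
\]
Summing over $\beta$ and collecting according to the remainder, the coefficient of $F^e_* x^r$ in $F^e_* f_i$ (in the notation of Proposition \ref{prop-cartier-comput}, this is the coefficient $g_{i,r}$) equals
\[
g_{i,r} \;=\; \sum_{\beta : r(\beta) = r} c_\beta^{(i)} \, x^{q(\beta)}.
\]

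The key degree estimate is then straightforward: for every $\beta$ contributing to the sum above,
\[
\deg x^{q(\beta)} \;=\; \sum_{k=1}^n \left\lfloor \frac{\beta_k}{p^e} \right\rfloor \;\leq\; \frac{|\beta|}{p^e} \;\leq\; \frac{d}{p^e},
\]
so each $g_{i,r}$ is a polynomial of degree $\leq d/p^e$. By Proposition \ref{prop-cartier-comput}, the ideal $\cC^{(F,e)}_R(J)$ is generated by these $g_{i,r}$, and the proposition follows.

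There is no real obstacle here; the only point requiring mild care is the floor-function bookkeeping, where one must use that the sum of floors is bounded by the floor of the sum (and in particular by the real number $|\beta|/p^e$), so that a global degree bound $d$ descends to a global degree bound $d/p^e$ after division by $p^e$.
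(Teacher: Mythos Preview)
Your proof is correct and follows essentially the same approach as the paper's: both use the monomial basis of $F^e_* R$, invoke Proposition~\ref{prop-cartier-comput}, and bound the degree of each coefficient $g_{i,r}$ by $d/p^e$ via the division-with-remainder decomposition of exponents. One small caveat: since the lift of Frobenius $F$ need not be the identity on $V$, your explicit formula should read $g_{i,r} = \sum_{\beta: r(\beta)=r} (F^e)^{-1}(c_\beta^{(i)})\, x^{q(\beta)}$, but as these coefficients lie in $V$ this does not affect the degree estimate.
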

\begin{proof}
For the given lift of Frobenius, an $R$-basis for $F^e_* R$ is given by the monomials
$$x^\alpha \ceq x_1^{\alpha_1} \cdots x_n^{\alpha_n},$$
where $\alpha = (\alpha_1 , \dots , \alpha_n)$ is a tuple of integers with $0 \leq \alpha_i < p^e$ for every $i$. Pick generators $J = (f_1, \dots , f_s)$ such that $\deg f_i \leq d$, and express each $F^e_* f_i$ in this basis as in Equation (\ref{eqn-Fe-basis}). This gives
$$f_i = \sum_{0 \leq \alpha_i < p^e} F^e(g_{i, \alpha}) x^\alpha.$$
Since the given lift of Frobenius multiplies degrees by $p^e$, for every $i$ and $\alpha$, we get
$$
d \ \geq \ \deg(f_i) \ \geq \ p^e \deg(g_{i \alpha}) + \deg (x^\alpha) \ \geq \ p^e \deg (g_{i \alpha}),
$$
from which we deduce that $\deg(g_{i \alpha}) \leq d / p^e$. Since $\cC^{(F, e)}_R (J)$ is generated by the $g_{i \alpha}$, the statement follows. 
\end{proof}

\subsection{Certain algebras of continuous functions} \label{subscn-alg-cts-function}

Throughout this subsection we fix a prime number $p$, and $\Z_p$ will denote the ring of $p$-adic integers. Given sets $X$ and $Y$, we denote by $\Fun(X,Y)$ the set of all functions $X \to Y$. 

\begin{definition}
Let $A$ be a set. We denote by $C(\Z_p, A)$ the set of continuous functions $\phi\colon \Z_p \to A$, where $A$ is equipped with the discrete topology.
\end{definition}

Recall that 
$$\Z_p = \lim_{\leftarrow e} ( \Z / p^e ),$$
from which we deduce
$$C\left(\Z_p, A\right) = \lim_{\rightarrow e} \Fun( \Z / p^e, A).$$
In other words, a function $\phi\colon \Z_p \to A$ belongs to $C(\Z_p, A)$ if and only if there exists some integer $e \geq 0$ such that $\phi(\alpha) = \phi(\beta)$ for all $\alpha, \beta \in \Z_p$ with $\alpha \equiv \beta \mod p^e \Z_p$.

If $A$ is a ring (not necessarily commutative), then $C(\Z_p, A)$ inherits a ring structure, given by pointwise addition and multiplication. In this situation, given an integer $e \geq 0$ and a $p$-adic integer $\alpha \in \Z_p$, we denote by $\chi^e_\alpha \in C(\Z_p, A)$ the operator
$$\chi^e_\alpha(\beta) \ceq \begin{cases}
	1& \text{ if } \beta \equiv \alpha \mod p^e \Z_p, \\
	0& \text{ otherwise.}
\end{cases}$$
Note that these operators satisfy the identity
\begin{equation} \label{eqn-chi-trans}
	\chi^e_\alpha = \sum_{i = 0}^{p^e - 1} \chi^{e+1}_{\alpha + i p^e}.
\end{equation}
Observe that if $M$ is an $A$-module, then $C(\Z_p, M)$ is naturally a $C(\Z_p, A)$-module.

If $A$ is a commutative ring, then so is $C(\Z_p, A)$, and hence we may consider $\Spec( C(\Z_p, A))$. We give a description of this spectrum as a locally ringed space for some special $A$.

\begin{proposition} \label{prop-Spec-C}
	Suppose $V$ is as in Setup~{\rm\ref{setup-V}}. Then we have an isomorphism
	$$\Spec \left( C\left(\Z_p, V\right) \right) \cong \left( \Z_p, \underline V \right)$$
	of locally ringed spaces, where $\underline V$ is the locally constant sheaf with value $V$.
\end{proposition}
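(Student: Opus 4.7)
The plan is to exhibit a natural bijection $\Z_p \to \Spec C(\Z_p, V)$, verify it is a homeomorphism, and then identify the structure sheaf via localization at the idempotent characteristic functions $\chi^e_\alpha$, recognizing it as the locally constant sheaf $\underline V$.

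For each $\alpha \in \Z_p$ the evaluation map $\mathrm{ev}_\alpha : C(\Z_p, V) \to V$, $\phi \mapsto \phi(\alpha)$, is a surjective ring homomorphism, so $\fm_\alpha \ceq \mathrm{ev}_\alpha^{-1}(\fm) = \{\phi : \phi(\alpha) \in \fm\}$ is a maximal ideal. The assignment $\alpha \mapsto \fm_\alpha$ is injective since, for $\alpha \neq \beta$, some $\chi^e_\alpha$ separates them. For surjectivity, given any prime $\fp \sq C(\Z_p, V)$, the identity $\chi^e_\alpha = \sum_{i=0}^{p^e-1} \chi^{e+1}_{\alpha + ip^e}$ together with the pairwise orthogonality of these idempotents implies, by primality, that for each $e \geq 0$ there is a unique residue $\alpha_e \in \Z/p^e$ with $\chi^e_{\alpha_e} \notin \fp$, and these residues are compatible as $e$ varies, defining some $\alpha \in \Z_p$. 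To show $\fp = \fm_\alpha$: for any $f \in \fp$, choose $e$ large enough that $f$ is constant on $U_e(\alpha) \ceq \alpha + p^e\Z_p$, so $f\chi^e_\alpha = f(\alpha)\chi^e_\alpha$; if $f(\alpha)$ were a unit, multiplying by $f(\alpha)^{-1}$ would force $\chi^e_\alpha \in \fp$, a contradiction, so $f(\alpha) \in \fm$ and $\fp \sq \fm_\alpha$. Conversely, for $g \in \fm_\alpha$, choose $e$ with $g$ constant on $U_e(\alpha)$ and decompose $g = g(\alpha)\chi^e_\alpha + g(1-\chi^e_\alpha)$; the second summand lies in $\fp$ (being a sum of $\chi^e_\beta \in \fp$ for $\beta \not\equiv \alpha \pmod{p^e}$), and the first is nilpotent since $g(\alpha) \in \fm$ is nilpotent in the artinian local ring $V$, hence also in $\fp$.

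Next I would check the topology matches. The preimage of a basic open $D(f)$ under $\alpha \mapsto \fm_\alpha$ is $f^{-1}(V \setminus \fm)$, which is open in $\Z_p$ since $f$ is continuous and $V \setminus \fm$ is clopen in the discrete topology on $V$. Conversely, every basic clopen $U_e(\alpha) \sq \Z_p$ corresponds under the bijection to $D(\chi^e_\alpha)$, so the bijection is open, hence a homeomorphism.

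For the structure sheaf, the key observation is that each $\chi^e_\alpha$ is idempotent, and for any idempotent $\varepsilon$ in a commutative ring $R$ one has $R[\varepsilon^{-1}] \cong \varepsilon R$ (as a ring with unit $\varepsilon$). Therefore
\[
C(\Z_p, V)[1/\chi^e_\alpha] \cong \chi^e_\alpha \cdot C(\Z_p, V) \cong C(U_e(\alpha), V),
\]
matching $\underline V(U_e(\alpha))$. Since the $U_e(\alpha)$ form a basis of the topology and these identifications are compatible with restriction, this produces an isomorphism of sheaves. As a sanity check, the stalk at $\alpha$ is $\varinjlim_e C(U_e(\alpha), V) = V$ (every locally constant $V$-valued function on some $U_e(\alpha)$ is constant on a smaller $U_{e'}(\alpha)$), matching the stalk of $\underline V$ and confirming the local-ring structure is preserved. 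The main obstacle is the surjectivity of $\alpha \mapsto \fm_\alpha$; once this is handled via the compatible system of idempotents plus the nilpotence of $\fm$, the remainder is bookkeeping, using the correspondence between the basic clopens $U_e(\alpha)$ and the idempotents $\chi^e_\alpha$.
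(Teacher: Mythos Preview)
Your proof is correct, but it takes a genuinely different route from the paper's.

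The paper argues categorically: it identifies $\Spec\big(\Fun(\Z/p^e,V)\big)\cong(\Z/p^e,\underline V)$ for each $e$, invokes the Stacks Project result that $\Spec$ of a filtered colimit of rings is the inverse limit of the spectra in the category of locally ringed spaces, and then verifies that $(\Z_p,\underline V)$ satisfies the universal property of $\varprojlim_e(\Z/p^e,\underline V)$. No explicit description of the primes of $C(\Z_p,V)$ is ever given; everything is packaged into the limit.

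You instead work entirely by hand: you classify the primes directly via the orthogonal idempotents $\chi^e_\alpha$ and the nilpotence of $\fm$, match the Zariski and $p$-adic topologies by comparing $D(\chi^e_\alpha)$ with the balls $U_e(\alpha)$, and compute the structure sheaf on the basis of clopens using $R[\varepsilon^{-1}]\cong\varepsilon R$ for idempotent $\varepsilon$. This is more elementary and self-contained (no external reference needed), and it makes the role of the artinian hypothesis completely transparent: it is exactly what forces $g(\alpha)\chi^e_\alpha$ to be nilpotent and hence lie in every prime. The paper's approach is shorter and more conceptual, but yours has the advantage of yielding the explicit description $\fp=\fm_\alpha$ of every prime along the way, which the paper only extracts afterwards as a consequence.
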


\begin{proof}
	If $e \geq 0$ is an integer, then $\Fun( \Z / p^e, V)$ is a product of $p^e$ copies of $V$, and therefore the ringed space $\Spec (\Fun(\Z / p^e, V))$ is a disjoint union of $p^e$ copies of $\Spec (V)$, indexed by $\Z / p^e$. We thus have isomorphisms of locally ringed spaces
	$$\Spec \left( \Fun(\Z / p^e, V) \right) \cong \Spec(V) \times \Z / p^e \cong \left( \Z / p^e, \underline V \right),$$
	where the last isomorphism follows because $\Spec(V)$ is a single point.
	
	By \cite[01YW]{Stacks} we thus get
	$$\Spec \left( C\left(\Z_p, V\right) \right) \cong \lim_{\leftarrow e} \left( \Z / p^e, \underline V \right),$$
	where the limit takes place in the category of locally ringed spaces. It hence suffices to show that $(\Z_p, \underline V)$ satisfies the universal property of the right-hand side.

	First observe that for each integer $e \geq 0$ we have a natural map $\pi_e\colon (\Z_p, \underline V) \to (\Z / p^e, \underline V)$. Suppose that $(W, \cO_W)$ is a locally ringed space with compatible maps $f_e\colon (W, \cO_W) \to (\Z / p^e, \underline V)$ for every $e \geq 0$. This induces a unique compatible continuous map of topological spaces $f_\infty\colon W \to \Z_p$, and the claim is that there is a unique compatible map of sheaves $\underline V \to f_{\infty *} (\cO_W)$, and hence there is a unique compatible map $f_\infty\colon (W, \cO_W) \to (\Z_p, \underline V)$.

	To prove the claim, note that a map of sheaves $\underline V \to f_{\infty, *} \cO_W$ is equivalent to the data of ring homomorphisms $V \to \cO_W( f^{-1}_{\infty} U)$ for every open $U \sq \Z_p$ (compatible with restriction), and recall that it is enough to specify these on a basis for the topology of $\Z_p$. To conclude the proof, recall that open sets of the form $\pi_e^{-1} (x)$ for $x \in \Z / p^e$ form a basis for the topology of $\Z_p$, and note that we have $f^{-1}_\infty \pi^{-1}_e(x) = f^{-1}_e (x)$.
\end{proof}

Fix $(V, \fm, \KK, F)$ as in Setup~\ref{setup-V}. By Proposition~\ref{prop-Spec-C} every prime ideal of $C(\Z_p, V)$ is maximal, and there is a bijective correspondence between $p$-adic integers and maximal ideals of $C(\Z_p, V)$. Tracing through the proof, one sees that, given a $p$-adic integer $\alpha \in \Z_p$, the corresponding maximal ideal is
$$(\fm : \alpha) \ceq \left(\phi \in C\left(\Z_p, V\right) \ \middle\vert \ \phi(\alpha) \in \fm \right).$$

This is an instance of a more general construction: given an ideal $\fa \sq V$ and a $p$-adic integer $\alpha \in \Z_p$, we can define an ideal $(\fa : \alpha) \sq C(\Z_p, V)$ given by
$$(\fa : \alpha) \ceq \left(\phi \in C\left(\Z_p, V\right) \ \middle\vert \ \phi(\alpha) \in \fa \right).$$
For example, the ideal $(0 : \alpha) \sq C(\Z_p, V)$ is the ideal that consists of functions which vanish at $\alpha$. Note that the evaluation morphism $[\phi \mapsto \phi(\alpha)]$ defines a surjective ring homomorphism $e_\alpha\colon C(\Z_p, V) \to V$ whose kernel is precisely $(0:\alpha)$, and therefore $V \cong C(\Z_p, V) / (0 : \alpha)$. Given an ideal $\fa \sq V$, we have $e_\alpha^{-1}(\fa) = (\fa : \alpha)$. 

This construction is compatible with taking powers, in the sense that $(\fa : \alpha)^k = (\fa^k : \alpha)$ for every ideal $\fa \sq V$, every $p$-adic integer $\alpha \in \Z_p$, and every integer $k \geq 0$. In particular, if $m \geq 0$ is chosen so that $\fm^{m+1} = 0$ in $V$, then we have
$$(\fm : \alpha) \supseteq (\fm : \alpha)^2  \supseteq \cdots \supseteq (\fm : \alpha)^{m} \supseteq (\fm : \alpha)^{m+1} = (\fm : \alpha)^{m+2} = \cdots = (0 : \alpha).$$

Given a $C(\Z_p, V)$-module $M$, we obtain an associated quasicoherent sheaf on $\Spec( C(\Z_p, V)) \cong \Z_p$. Recall that, given $\alpha \in \Z_p$, the stalk $M_\alpha$ of this sheaf at $\alpha \in \Z_p$ is the localization of $M$ at the maximal ideal $(\fm: \alpha)$. 

\begin{proposition} \label{prop-C-stalk-description}
	Fix $(V, \fm, \KK, F)$ as in Setup~{\rm\ref{setup-V}}, let $M$ be a $C(\Z_p, V)$-module, and let $\alpha \in \Z_p$ be a $p$-adic integer. There is a natural $C(\Z_p, V)$-linear isomorphism
	$$M_\alpha \cong M / (0:\alpha) M.$$
\end{proposition}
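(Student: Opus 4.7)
The plan is to construct mutually inverse $C(\Z_p, V)$-linear maps between $M_\alpha$ and $M/(0:\alpha)M$ by invoking the universal property of module localization in both directions. Since $(0:\alpha) \sq (\fm : \alpha)$, there are no set-theoretic obstructions; the real input will be the continuity of elements of $C(\Z_p, V)$ together with the idempotents $\chi^e_\alpha$, which provide an explicit way to ``localize'' around $\alpha$.

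First I would verify that every $\phi \in C(\Z_p, V) \setminus (\fm : \alpha)$ acts invertibly on $M/(0:\alpha)M$. By definition $\phi(\alpha)$ is a unit in $V$, and by continuity there is some $e \geq 0$ such that $\phi$ is constantly equal to $\phi(\alpha)$ on the coset $\alpha + p^e \Z_p$. Setting $\psi(\beta) \ceq \phi(\alpha)^{-1}$ for $\beta \in \alpha + p^e \Z_p$ and $\psi(\beta) \ceq 0$ otherwise yields a continuous function with $\phi \psi = \chi^e_\alpha$, which is congruent to $1$ modulo $(0:\alpha)$ because $\chi^e_\alpha(\alpha) = 1$. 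Hence $\phi$ has an inverse in $C(\Z_p, V)/(0:\alpha)$ and in particular acts invertibly on $M/(0:\alpha)M$. The universal property of localization then produces a $C(\Z_p, V)$-linear factorization $\bar \iota : M_\alpha \to M/(0:\alpha)M$ of the quotient map.

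Next I would check that $(0:\alpha) \cdot M_\alpha = 0$ to build a map in the opposite direction. Given $\phi \in (0:\alpha)$, continuity produces an $e \geq 0$ such that $\phi$ vanishes identically on $\alpha + p^e \Z_p$, and consequently $\chi^e_\alpha \cdot \phi = 0$ already in $C(\Z_p, V)$. Because $\chi^e_\alpha(\alpha) = 1 \notin \fm$, the element $\chi^e_\alpha$ lies outside $(\fm:\alpha)$, so $\phi$ is killed in the localization $M_\alpha$. The localization map $M \to M_\alpha$ therefore factors through the quotient, giving a $C(\Z_p, V)$-linear map $\bar \pi : M/(0:\alpha)M \to M_\alpha$.

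Finally I would verify that $\bar \iota$ and $\bar \pi$ are mutually inverse. Both compositions $\bar \iota \circ \bar \pi$ and $\bar \pi \circ \bar \iota$ are $C(\Z_p, V)$-linear and agree with the identity on the image of $M$; since every element of $M_\alpha$ is of the form $s^{-1} \iota(m)$ with $s \notin (\fm:\alpha)$ and $m \in M$, and every element of $M/(0:\alpha)M$ is the image of some $m \in M$, this is enough to conclude. The main (very mild) obstacle is just the explicit construction of the idempotent inverse and annihilator using $\chi^e_\alpha$; everything else is formal manipulation of universal properties.
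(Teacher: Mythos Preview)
Your proof is correct and follows essentially the same approach as the paper: both arguments use the idempotents $\chi^e_\alpha$ to show that elements outside $(\fm:\alpha)$ become units modulo $(0:\alpha)$ and that elements of $(0:\alpha)$ die in the localization, then conclude by checking the two induced maps are mutually inverse via compatibility with the natural maps from $M$. The only cosmetic difference is that the paper first reduces to the case $M = C(\Z_p,V)$ by tensoring, whereas you work directly with $M$ throughout.
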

\begin{proof}
	For ease of notation, let $C \ceq C(\Z_p, V)$. Since $M_\alpha \cong M \otimes_C C_\alpha$ and $M / (0:\alpha) M \cong M \otimes_C C / (0:\alpha)$, it suffices to prove the claim for $M = C$. Given an element $\phi \in C$, we denote by $[\phi]$ its equivalence class in $C / (0:\alpha)$. 

	Suppose $\psi \in C$ is such that $\psi \notin (\fm:\alpha)$; in other words, $\psi(\alpha) \notin \fm$. It follows that $\psi(\alpha)$ is invertible in~$V$. We have $\psi(\alpha)^{-1} \psi - 1 \in (0:\alpha)$, and therefore $[\psi(\alpha)^{-1}][\psi(\alpha)] = [\psi(\alpha)^{-1} \psi] = 1$. This shows that $\psi$ is invertible in $C / (0:\alpha)$. Since $\psi \in C \setminus (\fm:\alpha)$ was arbitrary, we get a natural $C$-linear map $C_\alpha \to C / (0:\alpha)$.

	Now let $\phi \in C$ be an element such that $\phi \in (0:\alpha)$, \textit{i.e.}~$\phi(\alpha) = 0$. Picking an $e \gg 0$ such that $\phi \in \Fun(\Z / p^e, V)$, we see that $\chi^e_\alpha \phi = 0$. Since $\chi^e_\alpha \notin (\fm:\alpha)$, we have $\phi / 1 = 0$ in $C_\alpha$. This shows that the natural map $C \to C_\alpha$ factors through $C / (0:\alpha)$, giving a natural map $C / (0:\alpha) \to C_\alpha$. 

	Since the maps $C_\alpha \to C / (0:\alpha)$ and $C / (0:\alpha) \to C_\alpha$ obtained above commute with the natural maps $C \to C_\alpha$ and $C \to C / (0:\alpha)$, we conclude that they are mutual inverses.
\end{proof}

\begin{corollary} \label{cor-C-double-stalk}
	If $\beta \in \Z_p$ is another $p$-adic integer, we have
	$$(M_\alpha)_\beta \cong 
	\begin{cases}
		M_\alpha& \text{ if } \alpha = \beta, \\
		0& \text{ otherwise. }
	\end{cases}
	$$
\end{corollary}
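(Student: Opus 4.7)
The plan is to apply Proposition \ref{prop-C-stalk-description} twice. That proposition gives a natural isomorphism $(M_\alpha)_\beta \cong M_\alpha / (0:\beta) M_\alpha$, so the corollary reduces to showing that $(0:\beta) M_\alpha = 0$ when $\alpha = \beta$ and $(0:\beta) M_\alpha = M_\alpha$ otherwise.

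For the case $\alpha = \beta$, I would use the description $M_\alpha \cong M/(0:\alpha) M$ from Proposition \ref{prop-C-stalk-description} directly: by construction the ideal $(0:\alpha)$ annihilates this quotient, so $(0:\alpha) M_\alpha = 0$ and $(M_\alpha)_\alpha \cong M_\alpha$.

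The substantive case is $\alpha \neq \beta$, and the key observation will be the existence of an idempotent-like characteristic function that separates $\alpha$ from $\beta$. Since $\alpha \neq \beta$ in $\Z_p$, pick $e \gg 0$ with $\alpha \not\equiv \beta \mod p^e \Z_p$, and consider $\chi^e_\alpha \in C(\Z_p, V)$. Then $\chi^e_\alpha(\beta) = 0$ gives $\chi^e_\alpha \in (0:\beta)$, while $(1 - \chi^e_\alpha)(\alpha) = 0$ gives $1 - \chi^e_\alpha \in (0:\alpha)$. Using the description $M_\alpha \cong M/(0:\alpha)M$, the element $\chi^e_\alpha$ acts as the identity on $M_\alpha$; hence every $u \in M_\alpha$ equals $\chi^e_\alpha \cdot u$, so $u \in (0:\beta) M_\alpha$. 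Therefore $(0:\beta) M_\alpha = M_\alpha$ and the quotient vanishes.

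I do not expect a serious obstacle here: both cases follow essentially formally from Proposition \ref{prop-C-stalk-description} once one remembers the functions $\chi^e_\alpha$. The only mild subtlety is keeping straight that the ``localization at a maximal ideal'' description and the ``quotient by $(0:\alpha)$'' description of $M_\alpha$ agree, which is exactly the content of the previous proposition and can be invoked as a black box.
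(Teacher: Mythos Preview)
Your proposal is correct and follows essentially the same approach as the paper: both invoke Proposition \ref{prop-C-stalk-description} to identify $M_\alpha \cong M/(0:\alpha)M$, treat the case $\alpha=\beta$ as immediate, and for $\alpha\neq\beta$ use that $M_\alpha$ is supported only at $(\fm:\alpha)$. The only cosmetic difference is that the paper phrases the $\alpha\neq\beta$ case abstractly (noting $(\fm:\alpha)$ is the unique maximal ideal containing $(0:\alpha)=(\fm:\alpha)^{m+1}$, so localization at $(\fm:\beta)$ vanishes), whereas you exhibit the explicit element $\chi^e_\alpha$ witnessing this; your version is a concrete unpacking of the same fact.
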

\begin{proof}
	Write $M_\alpha \cong M / (0:\alpha) M$. Since $(\fm:\alpha)$ is the only maximal ideal containing $(\fm:\alpha)^{m+1} = (0:\alpha)$, we conclude that $(M_\alpha)_\beta = 0$ whenever $\alpha \neq \beta$. On the other hand, it is clear that $(M_\alpha)_\alpha \cong M_\alpha$. 
\end{proof}

\begin{proposition} \label{prop-disc-C-module}
	Fix $(V, \fm, \KK, F)$ as in Setup~{\rm\ref{setup-V}}, and let $M$ be a $C(\Z_p, V)$-module. Suppose there are only finitely many $\alpha \in \Z_p$ such that $M_\alpha \neq 0$; call these $\alpha_1, \dots , \alpha_s$. Then the natural map
	$$M \lra M_{\alpha_1} \oplus M_{\alpha_2} \oplus \cdots \oplus M_{\alpha_s}$$
	is an isomorphism, and the annihilator of\, $M$ takes the form
	$$\Ann(M) = (\fa_1 : \alpha_1) \cdot (\fa_2 : \alpha_2) \cdots (\fa_s : \alpha_s)$$
	for some ideals $\fa_1, \dots , \fa_s \sq V$.
\end{proposition}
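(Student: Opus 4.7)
The plan is to use the identification $M_{\alpha_i} \cong M/(0:\alpha_i) M$ from Proposition \ref{prop-C-stalk-description}, so that the natural map becomes the tuple of quotient maps $M \to \bigoplus_i M/(0:\alpha_i) M$. Throughout, the central tool will be the clopen indicators $\chi^e_\gamma \in C(\Z_p, V)$: for $e$ large enough that $\alpha_1, \dots, \alpha_s$ are pairwise distinct modulo $p^e$, the elements $\chi^e_{\alpha_1}, \dots, \chi^e_{\alpha_s}$ form orthogonal idempotents with $\chi^e_{\alpha_i}(\alpha_j) = \delta_{ij}$. Surjectivity of the map then comes essentially for free: given $u_1, \dots, u_s \in M$, the element $u := \sum_i \chi^e_{\alpha_i} u_i$ satisfies $u - u_j = (\chi^e_{\alpha_j} - 1) u_j + \sum_{i \neq j} \chi^e_{\alpha_i} u_i \in (0:\alpha_j) M$ for each $j$, since every coefficient in this sum vanishes at $\alpha_j$.

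The main step, and the main obstacle, is injectivity, which is where the support hypothesis enters. Suppose $u \in M$ lies in $(0:\alpha_i) M$ for every $i$. Writing $u = \sum_j \phi_{ij} v_{ij}$ with $\phi_{ij} \in (0:\alpha_i)$, and using that the continuity of each $\phi_{ij}$ as a map to the discrete set $V$ forces it to vanish on an entire basic open neighborhood of $\alpha_i$, one finds a large $e_i$ with $\chi^{e_i}_{\alpha_i} u = 0$. For $\beta \notin \{\alpha_1, \dots, \alpha_s\}$ the hypothesis $M_\beta = 0$ gives $M = (0:\beta) M$ (again via Proposition \ref{prop-C-stalk-description}), so in particular $u \in (0:\beta) M$ and the same construction produces $e_\beta$ with $\chi^{e_\beta}_\beta u = 0$. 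The open sets $\gamma + p^{e_\gamma} \Z_p$ cover the compact space $\Z_p$, so I extract a finite subcover, refine it to a clopen partition $W_1, \dots, W_N$ with each $W_k \sq \gamma_k + p^{e_{\gamma_k}} \Z_p$, and let $\eta_k \in C(\Z_p, V)$ be the indicator of $W_k$. Since $\eta_k = \eta_k \chi^{e_{\gamma_k}}_{\gamma_k}$, I get $\eta_k u = 0$ for every $k$, and summing yields $u = \big( \sum_k \eta_k \big) u = 0$. The essential difficulty here is precisely this use of compactness to convert the local vanishing at each $\gamma \in \Z_p$ into a global vanishing.

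For the annihilator statement, the decomposition gives $\Ann(M) = \bigcap_i \Ann_{C(\Z_p, V)}(M_{\alpha_i})$. The evaluation isomorphism $V \cong C(\Z_p, V)/(0:\alpha_i)$ lets me regard $M_{\alpha_i}$ as a $V$-module, and letting $\fa_i \sq V$ be its annihilator there, unwinding definitions gives $\Ann_{C(\Z_p, V)}(M_{\alpha_i}) = (\fa_i : \alpha_i)$. These ideals are pairwise comaximal: for $i \neq j$ and $e$ large, $\chi^e_{\alpha_i}$ vanishes at $\alpha_j$ and so lies in $(\fa_j : \alpha_j)$, while $1 - \chi^e_{\alpha_i}$ vanishes at $\alpha_i$ and so lies in $(\fa_i : \alpha_i)$, and their sum is $1$. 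Hence their intersection coincides with their product, yielding $\Ann(M) = \prod_i (\fa_i : \alpha_i)$.
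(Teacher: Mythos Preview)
Your proof is correct but takes a different route from the paper's. The paper argues purely by localizing: letting $K$ and $Q$ be the kernel and cokernel of the map, one takes stalks of the exact sequence $0 \to K \to M \to \bigoplus_i M_{\alpha_i} \to Q \to 0$ at an arbitrary $\beta \in \Z_p$ and invokes Corollary~\ref{cor-C-double-stalk} (that $(M_\alpha)_\beta$ is $M_\alpha$ or $0$ according as $\beta = \alpha$ or not) to obtain $K_\beta = Q_\beta = 0$; since this holds for every prime, $K = Q = 0$. You instead build an explicit partition of unity: surjectivity is done by lifting via the orthogonal idempotents $\chi^e_{\alpha_i}$, and for injectivity you use the compactness of $\Z_p$ to assemble the local annihilation equations $\chi^{e_\gamma}_\gamma u = 0$ into the global identity $u = 0$. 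Your approach is more elementary and self-contained --- it bypasses the double-stalk computation entirely and makes the role of compactness explicit --- while the paper's is shorter and more conceptual, treating the statement as a routine stalkwise check for quasicoherent sheaves. The annihilator portion is essentially identical in both.
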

\begin{proof}
	Let $K$ (resp.\ $Q$) be the kernel (resp.\ cokernel) of the given map, so that we have an exact sequence
	$$0 \lra K \lra M \lra M_{\alpha_1} \oplus \cdots \oplus M_{\alpha_s} \lra Q \lra 0;$$
	the goal is to show that $K = Q = 0$. For this, take an arbitrary $\beta \in \Z_p$, and take the stalk at $\beta$ of this sequence. By virtue of Corollary~\ref{cor-C-double-stalk}, we conclude that, whenever $\beta \in \{\alpha_1, \dots , \alpha_s \}$, we get
	$$0 \lra K_\beta \lra M_\beta \lra M_\beta \lra Q_\beta \lra 0.$$
	On the other hand, when $\beta \notin \{\alpha_1, \dots , \alpha_s \}$, we get
	$$0 \lra K_\beta \lra 0 \lra 0 \lra Q_\beta \lra 0.$$
	In either case we get $K_\beta = Q_\beta = 0$, and since this happens for all $\beta \in \Z_p$, we conclude that $K = Q = 0$ as claimed.  

	For the statement about the annihilator, observe that for every $i = 1, \dots , s$ the action of $C(\Z_p, V)$ on $M_{\alpha_i}$ factors through the evaluation map $e_\alpha\colon C(\Z_p, V) \to C(\Z_p, V) / (0: \alpha_i) \cong V$, and therefore the annihilator of $M_{\alpha_i}$ has the form $e_{\alpha_i}^{-1}(\fa_i) = (\fa_i : \alpha_i)$ for some ideal $\fa_i \sq V$. We conclude that the annihilator of $M$ is $\bigcap_i (\fa_i : \alpha_i)$, and the statement follows from the fact that the ideals $(\fa_i : \alpha_i)$ are pairwise coprime.
\end{proof}

For us the algebra $C(\Z_p, V)$ comes into play through rings of differential operators as follows. We let $(V, \fm, \KK, F)$ be as in Setup~\ref{setup-V}, and consider the polynomial ring $V[t]$ in the variable $t$. We give $V[t]$ the natural $\N$-grading for which $\deg(t) = 1$; this induces a $\Z$-grading on the ring $\cD_{V[t]}$ of $V$-linear differential operators on $V[t]$. We denote by $(\cD_{V[t]})_0$ the subring of $\cD_{V[t]}$ that consists of degree zero differential operators.

If $\phi \in C(\Z_p, V)$ is a continuous function $\phi\colon \Z_p \to V$, we let $\tilde \phi\colon V[t] \to V[t]$ be the unique $V$-linear map given by
$$\tilde \phi (t^a) = \phi(-1-a) t^a$$
for every integer $a \geq 0$. 

We claim that $\tilde \phi$ is a differential operator. To see this, we fix the compatible lift of Frobenius $F\colon V[t] \to V[t]$ given by $F(t) = t^p$. If $e \gg 0$ is large enough so that $\phi$ factors through $\Z / p^e$, then for every integer $a \geq 0$ we get
\begin{align*}
	\tilde \phi (F^e(t) t^a) & = \tilde \phi \left(t^{p^e + a}\right) = \phi(-1-a-p^e) t^{p^e + a} \\
		& = \phi(-1-a) t^{p^e + a} = t^{p^e} \tilde \phi (t^a) = F^e(t) \tilde \phi (t^a),
\end{align*}
which shows that $\tilde \phi \in \cD^{(F, e)}_{V[t]}$. Since $\tilde \phi$ clearly has degree zero, we conclude that $\tilde \phi \in (\cD^{(F, e)}_{V[t]})_0$. 

We therefore get a $V$-algebra homomorphism 
$$ 
	C\left(\Z_p, V\right) \overset{\Delta}{\underset{\highsim}\lra}  \left(\cD_{V[t]}\right)_0,
$$
given by $\Delta(\phi) = \tilde \phi$, which one immediately checks to be an isomorphism. 

Note, for example, that given an integer $e \geq 0$ and integers $0 \leq a, b < p^e$, the operator associated to the function $\phi = \chi^e_a$ given above acts by
$$\tilde \chi^e_a \left(t^{p^e - 1 -b}\right) = \begin{cases}
	t^{p^e - 1 - a} &\text{ if } b = a, \\
	0 &\text{ otherwise.}
\end{cases}$$

We also have the following generalization. Suppose $(V, \fm, \KK, F)$ are as in Setup~\ref{setup-V} and $R$ is a finite-type $V$-algebra. We consider the polynomial ring $R[t]$ in the variable $t$, to which we give the natural $\N$-grading by $\deg(R) = 0$ and $\deg(t) = 1$. This induces a $\Z$-grading on the ring $\cD_{R[t]}$ of $V$-linear differential operators on $R[t]$. The isomorphism $C(\Z_p, V) \cong (\cD_{V[t]})_0$ can be upgraded to an isomorphism
$$
	C\left(\Z_p, \cD_R\right) \overset{\Delta}{\underset{\highsim}\lra}  \left(\cD_{R[t]}\right)_0. 
$$
Indeed, this isomorphism is obtained as the composition
\begin{align*}
	C\left(\Z_p, \cD_R\right) & \cong C\left(\Z_p, V\right) \otimes_V \cD_R \overset{\lowsim}\lra \left(\cD_{V[t]}\right)_0 \otimes_V \cD_R \cong \left(\cD_{R[t]}\right)_0.
\end{align*}

\section{Definition and characterization of Bernstein--Sato roots} \label{scn-b-function-Zpm}

\subsection{Motivation from characteristic zero} \label{subscn-char0-mot}

Let $R \ceq \C[x_1, \dots , x_n]$ be a polynomial ring over $\C$. Recall that, in this case, the ring $\cD_{R}$ of $\C$-linear differential operators on $R$ can be described as the following subalgebra of $\End_\C(R)$:
$$\cD_R = R \la \partial_1, \dots , \partial_n \ra,$$
where $\partial_i$ is the operator $\partial_i \ceq \partial / \partial x_i$. 

Let $f \in R$ be a nonzero polynomial, let $s$ be a new indeterminate, and consider the $\cD_R[s]$-module $R[f^{-1} , s] \bs{f^s}$, where $\bs{f^s}$ is a formal symbol. The action of $R$ and $s$ on $R[f^{-1}, s] \bs{f^s}$ is as expected; in other words, we have $R[f^{-1}, s] \bs{f^s} \cong R[f^{-1}, s]$ as an $R[s]$-module. However, the action of derivations is determined~by
$$\partial_i \cdot \bs{f^s} \ceq s \frac{\partial f}{\partial x_i} f^{-1} \bs{f^s}.$$
A theorem of Bernstein and Sato states that there is a nonzero polynomial $b(s) \in \C[s]$ such that
$$b(s) \bs{f^s} = P(s) f \bs{f^s}$$
for some $P(s) \in \cD_R[s]$. The monic polynomial of least degree satisfying the above for some $P(s) \in \cD_R[s]$ is called the Bernstein--Sato polynomial of $f$ and is denoted by $b_f(s)$. Said differently, $b_f(s)$ is the monic generator of the ideal
$$\left(b_f(s)\right) = \Ann_{\C[s]} \left( \cD_R[s] \cdot \bs{f^s} \ \big/ \ \cD_R[s] \cdot f \bs{f^s} \right) \sq \C[s].$$

As explained in Section~\ref{scn-intro}, this description is not suitable for translation to positive characteristic, and one uses instead an alternative construction of $b_f(s)$ on which we now elaborate. We note that the earliest reference to this construction we could find is in work of Malgrange \cite{Mal83}.

Consider the graph $\gamma_f\colon \A^n \to \A^n \times \A^1$ of the polynomial $f$, and the $\cD_R$-module pushforward $\gamma_{f,+} (R[f^{-1}])$ of the $\cD_R$-module $R[f^{-1}]$. This admits a concrete description as
%
% $$\gamma_{f,+} (R[f^{-1}]) = R[t, f^{-1}, (f-t)^{-1}] \ \big/ \  R[t, f^{-1}], $$
$$\gamma_{f,+} \left(R\left[f^{-1}\right]\right) = \frac{R\left[t, f^{-1}, (f-t)^{-1}\right] }{  R\left[t, f^{-1}\right]}, $$
where $t$ is the coordinate on the new copy of $\A^1$, and for each integer $k \geq 0$ we let $\delta_k$ denote the class
$\delta_k \ceq [(f-t)^{- 1 - k}]\in \gamma_{f,+} (R[f^{-1}]$.
These elements $\delta_k$ form a basis for $\gamma_{f,+}(R[f^{-1}])$ as an $R[f^{-1}]$-module; that is,
$$\gamma_{f,+} \left(R\left[f^{-1}\right]\right) = \bigoplus_{ k = 0 }^\infty R\left[f^{-1}\right] \ \delta_k.$$
On the other hand, the module $R[f^{-1}, s] \bs{f^s}$ is also free over $R[f^{-1}]$, with a basis given by
$$R\left[f^{-1}, s\right] \bs{f^s} = \bigoplus_{ i = 0 }^\infty R\left[f^{-1}\right] s^i \bs{f^s}.$$
With this notation, there is an isomorphism
$$
	\gamma_{f,+} \left(R\left[f^{-1}\right]\right) \overset{\Phi}{\underset{\highsim}\lra} R\left[f^{-1}, s\right] \bs{f^s}
$$
which is $\cD_R[f^{-1}]$-linear and exchanges $\delta_0$ with the action of $- \partial_t t$ on the left with $s$ on the right. In particular, the change of basis is given by
$$\Phi(\delta_k) = (-1)^k \binom{s}{k}\, f^{-k} \bs{f^s}.$$
In view of this, the Bernstein--Sato polynomial of $f$ can also be viewed as the annihilator
$$\left( b_f (s)\right) = \Ann_{\C[s]} \left( \cD_R[- \partial_t t] \cdot \delta_0 \ \big/ \ \cD_R[-\partial_t t] \cdot f \delta_0 \right) \sq \C[s],$$
where $s$ acts on the module as the operator $- \partial_t t$. Since the module $\gamma_{f,+} (R[f^{-1}])$ can be constructed over an arbitrary base, this latter description is better for generalization, as long as one uses the correct replacement of $\cD_R[- \partial_t t]$. We refer the reader to \cite{Mustata2009,Bitoun2018,QG19,JNBQG} for an implementation of these ideas in positive characteristic.

\subsection{The \texorpdfstring{$\boldsymbol{b}$}{b}-function and Bernstein--Sato roots over \texorpdfstring{$\boldsymbol{\Z / p^m}$}{Z/pm}} \label{subscn-malgrange-module-b-function}

Let $(V, \fm, \KK, F)$ be as in Setup~\ref{setup-V}, let $R$ be a finite-type $V$-algebra equipped with a compatible lift of Frobenius $F\colon R \to R$, and let $f \in R$ be a nonzerodivisor. Consider the polynomial ring $V[t]$ on a variable $t$, equipped with the lift of Frobenius given by $F(t) = t^p$. Note that the lifts of Frobenius on $R$ and $V[t]$ induce one on $R[t] = R \otimes_V V[t]$. With this notation, for all integers $e \geq 0$, we have
$$\cD^{(F, e)}_{R[t]} = \cD^{(F, e)}_R \otimes_V \cD^{(F, e)}_{V[t]}.$$
Consider the $\cD_{R[t]}$-module
$$H_f \ceq \frac{R\left[t, f^{-1}, (f-t)^{-1}\right]}{ R\left[t, f^{-1}\right]}$$
and the element $\delta_0 \ceq [(f - t)^{-1}] \in H_f$. Now regard $R[t]$ as a graded ring, in which $\deg(R) = 0$ and $\deg(t) = 1$. This induces a $\Z$-grading on $\cD_{R[t]}$, and we let $(\cD_{R[t]})_0$ denote its degree zero piece. 

Recall that we have $(\cD_{R[t]})_0 = \cD_R \otimes_V (\cD_{V[t]})_0$ and that we have an isomorphism $(\cD_{V[t]})_0 \cong C(\Z_p, V)$ (see Section~\ref{subscn-alg-cts-function}). In particular, every $(\cD_{R[t]})_0$-module has a natural $C(\Z_p, V)$-structure by restriction of scalars. This is the case, for example, for the module
%
%$$N_f \ceq (\cD_{R[t]})_0 \cdot \delta_0 \ \big/ \ (\cD_{R[t]})_0 \cdot \delta_0.$$
%
$$N_f \ceq \frac{\left(\cD_{R[t]}\right)_0 \cdot \delta_0 }{\left(\cD_{R[t]}\right)_0 \cdot f \delta_0}.$$

Malgrange's description of the Bernstein--Sato polynomial (see Section~\ref{subscn-char0-mot}), as well as existing work in positive characteristic analogues, see \cite{Bitoun2018,QG19,JNBQG}, leads to the following as an analogue of the Bernstein--Sato polynomial in our current setting. 

\begin{definition}
	Let $(V, \fm, \KK, F)$ be as in Setup~\ref{setup-V}, let $R$ be a smooth $V$-algebra, and let $f \in R$ be a nonzerodivisor. The $b$-function of $f$ is the ideal $B_f \sq C(\Z_p, V)$ given as the annihilator
	$$B_f \ceq \Ann \left( N_f \right)  \sq C\left(\Z_p, V\right).$$
\end{definition}

\begin{remark}
	We will see (Corollary~\ref{cor-Nf-discrete}) that when $R$ is a polynomial ring over $V$, the $C(\Z_p, V)$-module $N_f$ satisfies the hypothesis of Proposition~\ref{prop-disc-C-module}; that is, there are only finitely many $\alpha \in \Z_p$ such that the stalk $(N_f)_\alpha$ is nonzero. If we call these $\alpha_1, \dots , \alpha_s$, this will imply that 
	$$N_f \cong \left(N_f\right)_{\alpha_1} \oplus \cdots \oplus \left(N_f\right)_{\alpha_s}$$
	and that
	$$B_f = (\fa_1 : \alpha_1) \cdot (\fa_2 : \alpha_2) \cdots (\fa_s : \alpha_s).$$
	To simplify the discussion, it will be useful to give these $\alpha_i$ a name at this stage.
\end{remark}

\begin{definition} \label{def-BSR}
	Let $(V, \fm, \KK, F)$ be as in Setup~\ref{setup-V}, let $R$ be a smooth $V$-algebra, and let $f \in R$ be a nonzerodivisor. A $p$-adic integer $\alpha \in \Z_p$ is called a Bernstein--Sato root of $f$ whenever $(N_f)_\alpha \neq 0$.
\end{definition}

Note that in Theorem~\ref{thm-BSR-equivalences} we obtain alternative characterizations of Bernstein--Sato roots. 

We begin our discussion by exploiting Frobenius to give an alternative description of $H_f$. For every integer $e \geq 0$, observe that $f^{p^{e}} - t^{p^e}$ is invertible in $R[t, f^{-1}, (f - t)^{-1}]$ by Proposition~\ref{prop-Frob-lift-localization}. We can thus let $\delta_{e} \in H_f$ denote the class  
$$\delta_e \ceq \left[\left(f^{p^e} - t^{p^e}\right)^{-1}\right] \in H_f.$$

Fix an integer $m \geq 0$ such that $\fm^{m+1} = 0$ in $V$. For every integer $e \geq 0$, observe that we have
$$\delta_{e + m} = \left[F^e\left(f^{p^m} - t^{p^m}\right)^{-1}\right]$$
and, moreover,
\begin{align*}
	\delta_{m + e} & = F^{e+1}\left(f^{p^m} - t^{p^m}\right) \ F^e\left(f^{p^m} - t^{p^m}\right)^{-1} \ \delta_{m+e+1} \\
		& = F^e \left( \left(f^{p^{m+1}} - t^{p^{m+1}}\right) (f^{p^m} - t^{p^m})^{-1} \right) \ \delta_{m + e+ 1} \\
		& = F^e \left( \sum_{i = 0}^{p-1} f^{i p^m} t^{(p-1-i)p^m} \right) \delta_{m+e+1} \\
		& =  \left( \sum_{i = 0}^{p-1} F^e\left(f^{ip^m}\right) \ F^{e+m} \left(t^{p-1-i}\right) \right) \delta_{m + e + 1}.
		\qedhere
\end{align*}

On the other hand, for every integer $e \geq 0$, we consider the module
$$ \tilde H^{m + e}_f \ceq \frac{R\left[t, f^{-1}\right]}{\left(f^{p^{m+e}} - t^{p^{m+ e}}\right)} \ \tilde \delta_{m + e} = \frac{R\left[t, f^{-1}\right]}{\left(F^e\left(f^{p^m}\right) - F^{e+m}(t)\right)}\tilde \delta_{m + e},$$
where $\tilde \delta_{m + e}$ is just a bookkeeping symbol. Note that the natural $\cD_{R[t]}$-module structure of $R[t, f^{-1}]$ endows $\tilde H^{m+e}_f$ with a natural $\cD^{(F, e)}_R \otimes_V \cD^{(F, m+e)}_{V[t]}$-module structure. Moreover, the homomorphism $\tilde H^{m+e}_f \to \tilde H^{m+e+1}_f$ given by
$$\tilde \delta_{m+e} \longmapsto \left( \sum_{i = 0}^{p-1} F^e\left(f^{i p^m}\right) F^{e+m} \left(t^{p-1-i}\right) \right) \ \tilde \delta_{m + e + 1}$$
%F^e \bigg( \sum_{i = 0}^{p-1} f^{ip^m} t^{(p-1-i) p^m} \bigg) \tilde \delta_{m+e+1}
%
is $\cD^{(F, e)}_R \otimes_V \cD^{(F, m+e)}_{V[t]}$-linear. It follows that the colimit of the $\tilde H^{m+e}_f$ has a natural $\cD_{R[t]}$-module structure. 

\begin{lemma} \label{lemma-cofinal-1}
	Let $(V, \fm, \KK, F)$ be as in Setup~{\rm\ref{setup-V}}, let $S$ be a $V$-algebra, and let $x, y \in S$ be two elements. The families of ideals $\{(x-y)^n \}_{n = 0}^\infty$ and $\{ x^{p^e} - y^{p^e} \}_{ e = 0}^\infty$ are cofinal.
\end{lemma}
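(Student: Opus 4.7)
The plan rests on establishing the stronger identity
\[
	(x - y)^{p^{e+m}} = (x^{p^e} - y^{p^e})^{p^m}
\]
in $S$, where $m \geq 0$ is any integer with $\fm^{m+1} = 0$ in $V$. To prove it, I set $u = x - y$ and expand
\[
	x^{p^e} - y^{p^e} = (y + u)^{p^e} - y^{p^e} = u^{p^e} + \sum_{i = 1}^{p^e - 1} \binom{p^e}{i} y^{p^e - i} u^i,
\]
noting that each inner binomial coefficient $\binom{p^e}{i}$ lies in $p\Z \sq \fm$. Hence $u^{p^e} \equiv x^{p^e} - y^{p^e} \pmod{\fm S}$, and Lemma \ref{lemma-p-power} applied with $k = 1$ and exponent $p^m$ upgrades this congruence to an equality modulo $\fm^{m+1} S = 0$, proving the identity.

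One direction of the cofinality follows immediately: for every $e \geq 0$, taking $n = p^{e+m}$ gives $(x-y)^n = (x^{p^e} - y^{p^e})^{p^m} \in (x^{p^e} - y^{p^e})$, so $((x-y)^n) \sq (x^{p^e} - y^{p^e})$. For the opposite direction I return to the expansion above but use the sharper estimate $v_p\binom{p^e}{i} \geq e - v_p(i)$ (which follows from $i \binom{p^e}{i} = p^e \binom{p^e - 1}{i - 1}$). This forces $\binom{p^e}{i} = 0$ in $V$ whenever $v_p(i) \leq e - m - 1$, which covers every $1 \leq i < p^{e - m}$; consequently $x^{p^e} - y^{p^e} \in ((x-y)^{p^{e-m}})$ for $e \geq m$, and choosing $e$ so that $p^{e - m} \geq n$ completes the argument.

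The main subtle point is the valuation bookkeeping for the middle binomial coefficients needed in the second direction; the first direction relies only on the coarse fact that these coefficients are divisible by $p$, boosted through Lemma \ref{lemma-p-power}. Crucially, no flatness or Frobenius hypothesis on $S$ is invoked, which is precisely why the lemma applies to an arbitrary $V$-algebra in the generality stated.
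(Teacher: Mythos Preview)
Your proof is correct, but the route differs from the paper's. The paper argues both inclusions by passing to a quotient ring: for the first direction it reduces modulo $(x^{p^e}-y^{p^e})$, observes that $(x-y)^{p^e}\in\fm S$ there (since $S/\fm S$ has characteristic $p$), and uses nilpotence of $\fm$ to get $(x-y)^{(m+1)p^e}=0$; for the second it reduces modulo $(x-y)^{p^i}$, gets $x^{p^i}\equiv y^{p^i}\pmod{\fm S}$ by the same char-$p$ observation, and then invokes Lemma~\ref{lemma-p-power} to conclude $x^{p^{m+i}}=y^{p^{m+i}}$. By contrast, you stay in $S$ throughout and work directly with the binomial expansion of $(y+u)^{p^e}$: the first direction falls out of the clean identity $(x-y)^{p^{e+m}}=(x^{p^e}-y^{p^e})^{p^m}$ (obtained via Lemma~\ref{lemma-p-power}), while the second uses the sharper valuation bound $v_p\binom{p^e}{i}\geq e-v_p(i)$ to kill the low-order terms outright. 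Your approach yields slightly more precise information (an actual identity of elements, and the explicit containment $x^{p^e}-y^{p^e}\in((x-y)^{p^{e-m}})$), at the cost of tracking binomial valuations; the paper's approach is softer and avoids that bookkeeping by exploiting the char-$p$ structure of $S/\fm S$.
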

\begin{proof}
	Fix an integer $e \geq 0$. We want to find an integer $n \geq 0$ such that $(x-y)^n \in (x^{p^e} - y^{p^e})$. Going modulo the ideal $(x^{p^e} - y^{p^e})$, we may assume that $x^{p^e} = y^{p^e}$, and we want to find an $n \geq 0$ such that $(x-y)^n = 0$. 

	Since $S / \fm S$ has characteristic $p > 0$, we know that $(x - y)^{p^e} = 0$ in $S / \fm S$; that is, we have $(x - y)^{p^e} \in \fm S$. If $m \geq 0$ is such that $\fm^{m+1} = 0$ in $V$, we conclude that $(x-y)^{(m+1) p^e} = 0$. 

	Now fix $n \geq 0$. We want to find some $e \geq 0$ such that $(x^{p^e} - y^{p^e}) \sq (x-y)^n$. First, assume that $n = p^i$ for some integer $i \geq 0$. Then, working modulo the ideal $(x-y)^{p^i}$, we may assume that $(x-y)^{p^i} = 0$, and we want to show that $x^{p^e} = y^{p^e}$ for some large enough~$e$.

	Again, since $S / \fm S$ has characteristic $p > 0$, we know that $x^{p^i} \equiv y^{p^i} \mod \fm S$. If $m \geq 0$ is such that $\fm^{m+1} = 0$ in $V$, we get that $x^{p^{m+i}} = y^{p^{m+i}}$ by Lemma~\ref{lemma-p-power}.
\end{proof}

\begin{proposition}
	With the notation as above, there is a $\cD_{R[t]}$-module isomorphism
	$$\lim_{\to e} \tilde H^{m+e}_f \overset{\lowsim}\lra H_f$$
	which, for every $e \geq 0$, identifies $\tilde \delta_{m + e}$ with $\delta_{m + e}$.  
\end{proposition}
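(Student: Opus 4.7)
My plan is to construct, for each $e \geq 0$, a map $\phi_e : \tilde H^{m+e}_f \to H_f$ with $\tilde \delta_{m+e} \mapsto \delta_{m+e}$, verify it is compatible with the transition maps of the directed system, and then show that the resulting map $\phi : \lim_{\to e} \tilde H^{m+e}_f \to H_f$ is bijective and $\cD_{R[t]}$-linear.

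First I would define $\phi_e(g \tilde \delta_{m+e}) \ceq g \delta_{m+e}$ for $g \in R[t, f^{-1}]$; well-definedness is immediate from $(f^{p^{m+e}} - t^{p^{m+e}}) \delta_{m+e} = [1] = 0$ in $H_f$. Compatibility with the transition maps amounts to the identity
\[
\delta_{m+e} = \Bigl( \sum_{i=0}^{p-1} F^e(f^{i p^m}) F^{e+m}(t^{p-1-i}) \Bigr) \delta_{m+e+1}
\]
already recorded in the excerpt, which comes from factoring $x^{p} - y^{p} = (x-y)\sum_i x^i y^{p-1-i}$ with $x = f^{p^{m+e}}$, $y = t^{p^{m+e}}$, and inverting both sides in $R[t, f^{-1}, (f-t)^{-1}]$. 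Each $\phi_e$ is linear over $\cD^{(F,e)}_R \otimes_V \cD^{(F,m+e)}_{V[t]} \supseteq \cD^{(F,e)}_{R[t]}$; since $\cD_{R[t]} = \bigcup_e \cD^{(F,e)}_{R[t]}$ by Proposition \ref{prop-diffops-Frobenius}, the colimit map $\phi$ is $\cD_{R[t]}$-linear.

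For surjectivity I would use Lemma \ref{lemma-cofinal-1} applied to $f, t \in R[t, f^{-1}]$ to find, for each $n$, some $e$ with $(f-t)^n \mid (f^{p^{m+e}} - t^{p^{m+e}})$ in $R[t,f^{-1}]$. An arbitrary class of $H_f$ can be written as $[g(f-t)^{-n}]$; writing $f^{p^{m+e}} - t^{p^{m+e}} = (f-t)^n h$ presents it as $gh \delta_{m+e} = \phi_e(gh \tilde \delta_{m+e})$. For injectivity it suffices (the colimit is directed) to show each $\phi_e$ is injective: if $g \tilde \delta_{m+e}$ lies in its kernel then $g (f^{p^{m+e}} - t^{p^{m+e}})^{-1} \in R[t, f^{-1}]$, and since $f^{p^{m+e}} - t^{p^{m+e}}$ is (up to sign) monic in $t$ and hence a nonzerodivisor in $R[t, f^{-1}]$, this forces $g \in (f^{p^{m+e}} - t^{p^{m+e}}) R[t, f^{-1}]$, so $g \tilde \delta_{m+e} = 0$ in $\tilde H^{m+e}_f$. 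The only non-formal input here is Lemma \ref{lemma-cofinal-1}; everything else is direct manipulation of the explicit presentations of $H_f$ and $\tilde H^{m+e}_f$.
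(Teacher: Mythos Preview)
Your proof is correct and follows essentially the same approach as the paper: both use Lemma \ref{lemma-cofinal-1} to control surjectivity, both reduce injectivity to the computation that the map $R[t,f^{-1}] \xrightarrow{\delta_{m+e}} H_f$ has kernel exactly $(f^{p^{m+e}} - t^{p^{m+e}})$ (your nonzerodivisor argument), and both invoke the transition identity already established in the text. The paper phrases things slightly differently by writing $H_f = \bigcup_e \Ann_{H_f}(f^{p^{m+e}} - t^{p^{m+e}}) = \bigcup_e R[t,f^{-1}]\,\delta_{m+e}$ up front, but the content is the same.
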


\begin{proof}
	Note that the module $H_f$ is $(f-t)$-power torsion. By Lemma~\ref{lemma-cofinal-1} the families of ideals $\{ (f-t)^n \}$ and $\{f^{p^{m + e}} - t^{p^{m+e}} \} = \{ F^e(f^{p^m} - t^{p^m} ) \}$ are cofinal. Letting $\delta_{m+e} \ceq [F^e (f^{p^m} - t^{p^m})^{-1}] \in H_f$, we obtain
	\begin{align*}
		H_f = \bigcup_{e = 0}^\infty \Ann_{H_f} \left( F^e\left( f^{p^m} - t^{p^m} \right) \right) = \bigcup_{e = 0}^\infty R\left[t, f^{-1}\right] \ \delta_{m + e}.
	\end{align*}
	Observe that $\delta_{m + e} \in H_f$ induces a $\cD^{(F, e)}_R \otimes_V \cD^{(F, m + e)}_{V[t]}$-linear map 
	$$R\left[t, f^{-1}\right] \xrightarrow{\delta_{e+m}} H_f$$
	whose kernel is precisely $(F^e(f^{p^m} - t^{p^m}))$. To complete the proof, it suffices to observe that the transition maps agree; this follows from the discussion above.
\end{proof}

Given integers $e \geq 0$ and $0 \leq a < p^{m+e}$, let $Q^{m + e}_a \in H_f$ be the element given by
$$Q^{m+e}_a \ceq f^a \ t^{p^{m + e} - 1 - a} \ \delta_{m + e} \in H_f.$$
Note that we have
\begin{align*}
	f^a \ t^{p^{m + e} - 1 - a} \ \delta_{m + e} & = \left(f^a \ t^{p^{m + e} - 1 - a}\right) \left( \sum_{i = 0}^{p-1} f^{ip^{m + e}} \ t^{(p-1-i) p^{m + e}} \right) \delta_{m + e + 1} \\
						     & = \left( \sum_{i = 0}^{p-1} f^{a + i p^{m + e}} t^{(p-1 - a- ip^{m+e}) p^{m + e}} \right) \delta_{m + e + 1},
\end{align*}
which gives 
\begin{equation} \label{eqn-Q-trans}
	Q^{m+e}_a = \sum_{i = 0}^{p-1} Q^{m + e + 1}_{a + i p^{m + e}}.
\end{equation}
Moreover, if $P \in \cD^{(F, e)}_R$ is a differential operator of level $e$, $g \in R[f^{-1}]$ is an element, and $a,b$ are integers with $0 \leq a, b < p^{m+e}$, we have
\begin{align*}
	P \tilde \chi^{m+e}_a \cdot g Q^{m+e}_b & = P \tilde \chi^{m+e}_a \left( f^b t^{p^{m+e} - 1 - b} \ g \delta_{m + e}\right) \\
						& = \chi^{m+e}_a(b - p^{m+e})  P \left( f^b t^{p^{m+e} - 1 - b} \ g \right)\delta_{m + e},
\end{align*}
which gives
\begin{equation} \label{eqn-action-two}
	P \tilde \chi^{m+e}_a \cdot g Q^{m+e}_b =
	\begin{cases}
		f^{-a} P(f^a g) Q^{m+e}_a &\text{ if } a = b, \\
		0 &\text{ otherwise }
	\end{cases}
\end{equation}
(where the operators $\tilde \chi^{m+e}_a$ are as in Section~\ref{subscn-alg-cts-function}). 

\subsection{The module \texorpdfstring{$\boldsymbol{C(\Z_p, R[f^{-1}])\bs{f^s}}$}{C(Zp, R[f-1])fs}}

In characteristic zero it is useful to be able to understand the $b$-function through a functional equation instead of working with the module $N_f$. In this subsection we describe the module $C(\Z_p, R_f) \bs{f^s}$, which plays the role of $R_f[s] \bs{f^s}$ in our setting, and we show that it is isomorphic to the module $H_f$. From there we extract a new characterization of the $b$-function. In the case of positive characteristic, this construction was carried out in \cite{JNBQG}.

Recall that, given a set $A$, we denote by $C(\Z_p, A)$ the set of continuous functions $\Z_p \to A$, where $A$ has the discrete topology (see Section~\ref{subscn-alg-cts-function}). 

Let $(V, \fm, \KK, F)$ be as in Setup~\ref{setup-V}. Let $R$ be a finite-type $V$-algebra with a compatible lift of Frobenius $F\colon R \to R$, let $f \in R$ be a nonzerodivisor, and fix an $m > 0$ such that $\fm^{m+1} = 0$ in $V$. 

\begin{lemma} \label{lemma-conj-welldef}
	Suppose $P \in \cD^{(F, e)}_{R[f^{-1}]}$ is a differential operator of level $e$ with respect to $F$. For every $p$-adic integer $\alpha \in \Z_p$ and every element $g \in R[f^{-1}]$, the element
	$$f^{-a} P(f^a g) \in R\left[f^{-1}\right]$$
	is independent of the choice of $a \in \Z$ with $a \equiv \alpha \mod p^{e+m} \Z_p$. In particular, the function $\Z \to R[f^{-1}]$ given by $[a \mapsto f^{-a} P(f^a g)]$ is locally constant in the $p$-adic topology. 
\end{lemma}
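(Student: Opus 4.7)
The plan is to reduce the claim to showing invariance under shifts of the form $a \mapsto a + k p^{e+m}$ (with $k \in \Z$), and then to use the $F^e(R[f^{-1}])$-linearity of $P$ in combination with Lemma \ref{lemma-Frob-p-power} to absorb the extra power of $f$ through $P$.

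First, I would note that two integers $a, a'$ are congruent mod $p^{e+m}$ precisely when $a' = a + k p^{e+m}$ for some $k \in \Z$. It therefore suffices to prove
\[
f^{-a'} P(f^{a'} g) = f^{-a} P(f^a g)
\]
in this case. Here $f$ is invertible in $R[f^{-1}]$ and, by Proposition \ref{prop-Frob-lift-localization}, the lift $F$ extends uniquely to a compatible lift of Frobenius on $R[f^{-1}]$, so expressions like $f^{-a}$ and $F^e(f^r)$ for $r \in \Z$ make sense.

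Second, I would apply Lemma \ref{lemma-Frob-p-power} to the element $f \in R[f^{-1}]$ to obtain
\[
F^e(f^{p^m}) = f^{p^{m+e}},
\]
and, since $F^e$ is a ring homomorphism that extends to $R[f^{-1}]$, raising to the integer power $k$ yields
\[
F^e(f^{k p^m}) = f^{k p^{m+e}}.
\]
This is the key algebraic identity allowing the extra factor $f^{k p^{m+e}}$ to be viewed as lying in the image of $F^e$.

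Third, using that $P$ lies in $\cD^{(F,e)}_{R[f^{-1}]} = \Hom_{F^e(R[f^{-1}])}(R[f^{-1}], R[f^{-1}])$, and hence commutes with multiplication by any element of $F^e(R[f^{-1}])$, I would compute
\[
P(f^{a'} g) = P\bigl(F^e(f^{k p^m}) \cdot f^a g\bigr) = F^e(f^{k p^m}) \cdot P(f^a g) = f^{k p^{m+e}} P(f^a g),
\]
and then divide by $f^{a'} = f^{a + k p^{m+e}}$ to obtain the claimed equality. The "locally constant" statement is then immediate, since the value $f^{-a} P(f^a g)$ depends only on the residue class of $a$ modulo $p^{e+m}$, and the sets $a + p^{e+m} \Z$ form a basis of neighborhoods for the $p$-adic topology on $\Z$. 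The main (only) obstacle is lining up the exponents correctly; no step is deep, but one must be careful that $F^e$ has been extended to $R[f^{-1}]$ before invoking it on $f^{k p^m}$ with possibly negative $k$.
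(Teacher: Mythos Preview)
Your proof is correct and follows essentially the same route as the paper: both reduce to a shift $a' = a + k p^{e+m}$, invoke Lemma~\ref{lemma-Frob-p-power} to write $f^{k p^{e+m}} = F^e(f^{k p^m})$, and then use the $F^e(R[f^{-1}])$-linearity of $P$ to pull this factor through. Your explicit mention of Proposition~\ref{prop-Frob-lift-localization} to justify $F^e(f^{k p^m})$ for negative $k$ is a nice touch that the paper leaves implicit.
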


\begin{proof}
	Let $e$ be such that $P \in \cD^{(F, e)}_R$. Suppose $a, b$ are such that $a,b \equiv \alpha \mod p^{e+m} \Z_p$. We then have $a = b + k p^{e+m}$ for some $k \in \Z$. By Lemma~\ref{lemma-Frob-p-power} we have 
	$$f^{k p^{e+m}} = F^e\left(f^{k p^m}\right),$$
	and therefore multiplication by $f^{k p^{e+m}}$ commutes with $P$. In particular, we have
	\begin{align*}
		f^{- a} P (f^{a} g) & = f^{-b - k p^{e+m}} P \left(f^{a + k p^{e+m}} g\right) \\
				    & = f^{- b - k p^{e+m}} f^{k p^{e+m}}P\left(  f^{b} g\right) \\
						  & = f^{- b} P\left(f^{b} g\right). \qedhere
	\end{align*}
\end{proof}

Given a $p$-adic integer $\alpha \in \Z_p$, a differential operator $P \in \cD_{R[f^{-1}]}$, and an element $g \in R[f^{-1}]$, we introduce the notation 
$$f^{-\alpha} P( f^\alpha g) \ceq f^{-a} P(f^{-a} g),$$
where $a \in \Z$ is chosen sufficiently close  to $\alpha \in \Z_p$ in $p$-adic metric. If $P$ has level $e$ with respect to $F$, Lemma~\ref{lemma-conj-welldef} tells us that this is independent of $a \in \Z$ as long as $\alpha - a \in p^{e+m} \Z_p$. Moreover, it is easy to see that if $P \in \cD^{(F, e)}_{R[f^{-1}]}$ is a differential operator of level $e$, then so is $f^{-\alpha} P f^\alpha$. We thus get a ring automorphism
\begin{align*}
	& \xi_{f,\alpha}  \colon \cD_{R[f^{-1}]} \lra \cD_{R[f^{-1}]}, \\ 
	& \xi_{f,\alpha} (P)  \ceq f^{-\alpha} P f^\alpha, 
\end{align*}
which one can informally think of as conjugation by $f^\alpha$. 

We will show that these automorphisms induce an automorphism of $C(\Z_p, \cD_{R[f^{-1}]})$. First, given an element $\tilde P \in C(\Z_p, \cD_{R[f^{-1}]})$, define $\xi_f( \tilde P )$ to be the function $\Z_p \to \cD_{R[f^{-1}]}$ given by
$$\left( \xi_f\left(\tilde P\right) \right) (\alpha) \ceq f^{- \alpha} \tilde P(\alpha) f^\alpha.$$

Observe that we can combine the natural filtration on $C(\Z_p, - )$ with the level filtration on $\cD_{R[f^{-1}]}$ to obtain a filtration on $C(\Z_p, \cD_{R[f^{-1}]})$ as follows: 
\begin{align*}
C\left(\Z_p, \cD_{R[f^{-1}]}\right) & = \lim_{\to e} \Fun \left(\Z / p^e, \lim_{\to i} \cD^{(F, i)}_{R[f^{-1}]}\right) \\
				 & = \lim_{\to e} \Fun \left(\Z / p^{e+m} , \cD^{(F, e)}_{R[f^{-1}]}\right), 
%				 & = \lim_{\to e} \bigoplus_{0 \leq a < p^{e+m}} \cD^{(F, e)}_{R[f^{-1}]} \ \chi^{e+m}_a. 
\end{align*}
where the second equality crucially uses that $\Z / p^e$ is a finite set.

\begin{lemma} \label{lemma-conj-filtr}
	Given $\tilde P \in \Fun (\Z / p^{e+m}, \cD^{(F, e)}_{R[f^{-1}]})$, we have $\xi_f (\tilde P) \in \Fun (\Z / p^{e+m}, \cD^{(F, e)}_{R[f^{-1}]})$. 
\end{lemma}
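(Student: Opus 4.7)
The plan is to unpack both membership conditions separately: first verify that for each fixed $\alpha$, the conjugate $f^{-\alpha}\tilde P(\alpha)f^\alpha$ really lies in $\cD^{(F,e)}_{R[f^{-1}]}$, and then verify that the assignment $\alpha \mapsto f^{-\alpha}\tilde P(\alpha)f^\alpha$ factors through $\Z/p^{e+m}$.

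For the first step I would use Lemma \ref{lemma-conj-welldef} to replace $f^{\pm \alpha}$ by $f^{\pm a}$ for some integer $a\in\Z$ with $a\equiv\alpha \pmod{p^{e+m}}$; this makes the conjugate an honest $V$-linear endomorphism of $R[f^{-1}]$ (multiplication by $f^{\pm a}$ is a well-defined endomorphism since $f$ is invertible in $R[f^{-1}]$). To check $F^e(R[f^{-1}])$-linearity, take any $h \in R[f^{-1}]$ and compute
\[
(f^{-a}\tilde P(\alpha) f^a) \circ F^e(h)
  = f^{-a}\tilde P(\alpha)\bigl(F^e(h)\cdot f^a (-)\bigr)
  = f^{-a}F^e(h)\,\tilde P(\alpha) f^a
  = F^e(h)\circ (f^{-a}\tilde P(\alpha) f^a),
\]
using that multiplication by $f^{\pm a}$ commutes with multiplication by $F^e(h)$ (both are multiplications in the commutative ring $R[f^{-1}]$) and that $\tilde P(\alpha)\in \cD^{(F,e)}_{R[f^{-1}]}$ commutes with multiplication by $F^e(h)$. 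Hence $\xi_f(\tilde P)(\alpha) \in \cD^{(F,e)}_{R[f^{-1}]}$.

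For the second step, suppose $\alpha,\beta\in\Z_p$ satisfy $\alpha\equiv\beta\pmod{p^{e+m}}$. Since $\tilde P\in \Fun(\Z/p^{e+m},\cD^{(F,e)}_{R[f^{-1}]})$, we have $\tilde P(\alpha)=\tilde P(\beta)$, call this common operator $P$. Now pick a single integer $a\in\Z$ with $a\equiv\alpha\equiv\beta\pmod{p^{e+m}}$; by Lemma \ref{lemma-conj-welldef} applied both at $\alpha$ and at $\beta$, we have
\[
\xi_f(\tilde P)(\alpha) = f^{-a}Pf^a = \xi_f(\tilde P)(\beta),
\]
so $\xi_f(\tilde P)$ descends to a function on $\Z/p^{e+m}$, completing the proof.

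There is no real obstacle here; the only subtlety is being careful that the symbols $f^{\pm\alpha}$ are not genuine elements of $R[f^{-1}]$ but rather shorthand for conjugation operators, so every manipulation must be reduced to integer exponents via Lemma \ref{lemma-conj-welldef} before any commutation argument is made. Once that replacement is performed, both claims follow from the usual fact that multiplications by units commute with each other as operators, together with the defining $F^e(R[f^{-1}])$-linearity of $P$.
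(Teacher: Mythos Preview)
Your proposal is correct and follows essentially the same approach as the paper's proof. The paper is simply more terse: it declares the first step ``clear'' (having already remarked just before the lemma that conjugation by $f^{\pm\alpha}$ preserves level $e$), and for the second step it invokes Lemma~\ref{lemma-conj-welldef} directly, exactly as you do.
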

\begin{proof}
	It is clear that for every $\alpha \in \Z_p$ we have $(\xi_f \tilde P)(\alpha) \in \cD^{(F, e)}_{R[f^{-1}]}$, so it suffices to check that for every $\alpha, \beta \in \Z_p$ we have $(\xi_f (\tilde P))(\alpha + p^{e+m} \beta) = (\xi_f (\tilde P))(\alpha)$, which follows from Lemma~\ref{lemma-conj-welldef}.
\end{proof}

From Lemma~\ref{lemma-conj-filtr} we conclude that $\xi_f$ gives a ring automorphism 
$$\xi_f\colon C\left(\Z_p, \cD_{R[f^{-1}]}\right) \overset{\lowsim}\lra C\left(\Z_p, \cD_{R[f^{-1}]}\right).$$
Given a $C(\Z_p, \cD_{R[f^{-1}]})$-module $M$, by restricting scalars along $\xi_f$, we obtain a new module, which we denote by $\xi_{f, *} M$. Note that $\xi_{f, *} M = M$ as an abelian group. By applying this construction to the module $C(\Z_p, R[f^{-1}])$, we obtain the module that we have been looking for. 

\begin{definition}
	Let $(V, \fm, \KK, F)$ be as in Setup~\ref{setup-V}, let $R$ be a finite type $V$-algebra with a compatible lift of Frobenius $F\colon R \to R$, and let $f \in R$ be a nonzerodivisor. The module $C(\Z_p, R[f^{-1}]) \bs{f^s}$ is given by
	$$C\left(\Z_p, R\left[f^{-1}\right]\right) \bs{f^s} \ceq \xi_{f, *} C\left(\Z_p, R\left[f^{-1}\right]\right).$$
	An element $\tilde g \in C(\Z_p, R[f^{-1}])$ will be denoted by $\tilde g \bs{f^s}$ when we want to emphasize that we think of $\tilde g$ as an element of $C(\Z_p, R[f^{-1}]) \bs{f^s}$.
\end{definition}

Recall that we have a $V$-algebra isomorphism
$$
	C\left(\Z_p, \cD_R\right) \overset{\Delta}{\underset{\highsim}\lra} \left(\cD_{R[t]}\right)_0,
$$
as described in Section~\ref{subscn-alg-cts-function}. Given a $C(\Z_p, \cD_R)$-module $M$ and a $(\cD_{R[t]})_0$-module $N$, we say that an isomorphism $\alpha\colon M \xrightarrow{\lowsim} N$ is $\Delta$-semilinear whenever we have
$$\alpha(\phi \cdot u) = \Delta(\phi) \cdot \alpha(u)$$
for all $\phi \in C(\Z_p, \cD_R)$ and $u \in M$. Informally speaking, $\alpha$ exchanges the $C(\Z_p, \cD_R)$-action of $M$ with the $(\cD_{R[t]})_0$-action of $N$ in a way that is compatible with $\Delta$.

Our next goal is to show that we have a natural $\Delta$-semilinear isomorphism $C(\Z_p, R[f^{-1}]) \bs{f^s} \cong H_f$. We begin by obtaining a concrete description of the module $C(\Z_p, R[f^{-1}]) \bs{f^s}$. First observe that we have
\begin{align*}
	C\left(\Z_p, R\left[f^{-1}\right]\right) & = \lim_{\to e} \Fun \left(\Z / p^{m + e} , R\left[f^{-1}\right]\right) \bs{f^s} \\
%		& = \lim_{\to e} \bigoplus_{0 \leq a < p^{m+e}} R[f^{-1}] \ \chi^{m + e}_a \bs{f^s}.\\
		& = \lim_{\to e} \bigoplus_{a = 0}^{{p^{m + e} - 1}} R\left[f^{-1}\right] \ \chi^{m + e}_a \bs{f^s}.
\end{align*}

As discussed above, we also have
\begin{align*}
	C\left(\Z_p, \cD_R\right) & = \lim_{\to e} \Fun\left(\Z / p^{e+m}, \cD^{(F, e)}_R\right) \\
		       & = \lim_{\to e} \bigoplus_{a = 0}^{p^{m + e} - 1} \cD_R^{(F, e)} \ \chi^{m + e}_a.
\end{align*}
Given an integer $e \geq 0$, an operator $P \in \cD^{(F, e)}_R$, integers $0 \leq a, b < p^{m + e}$, and an element $g \in R[f^{-1}]$, the action of $C(\Z_p, \cD_R)$ on $C(\Z_p, R[f^{-1}]) \bs{f^s}$ is determined by
\begin{equation} \label{eqn-action1}
	P \chi^{m + e}_a \cdot g \chi^{m + e}_b \bs{f^s} = 
\begin{cases}
	f^{-a} P(f^a g) \chi^{m + e}_a \bs{f^s} &\text{ if } a = b, \\
	0 &\text{ otherwise.}
\end{cases}
\end{equation}

\begin{proposition}
	Let $(V, \fm, \KK, F)$ be as in Setup~{\rm\ref{setup-V}}, let $R$ be a finite-type $V$-algebra equipped with a compatible lift of Frobenius $F\colon R \to R$, and let $f \in R$ be a nonzerodivisor. There is a $\Delta$-semilinear isomorphism
	$$C\left(\Z_p, R\left[f^{-1}\right]\right) \bs{f^s} \cong H_f$$
	which exchanges $\bs{f^s} \in C(\Z_p, R[f^{-1}]) \bs{f^s}$ with $\delta_0 \in H_f$.
\end{proposition}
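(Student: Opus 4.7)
The plan is to exhibit both modules as filtered colimits indexed by $e \geq 0$, construct a compatible family of isomorphisms level-by-level, and then read off both the $\Delta$-semilinearity and the normalization $\bs{f^s} \mapsto \delta_0$ from formulas already established. At level $e$, the source decomposes as $\bigoplus_{a=0}^{p^{m+e}-1} R[f^{-1}]\chi^{m+e}_a \bs{f^s}$ (this was recorded just before Equation \eqref{eqn-action1}), while the level-$e$ term $\tilde H^{m+e}_f = R[t, f^{-1}]/(F^e(f^{p^m}-t^{p^m}))\tilde\delta_{m+e}$ is a free $R[f^{-1}]$-module with basis $Q^{m+e}_0, \dots, Q^{m+e}_{p^{m+e}-1}$, since $Q^{m+e}_a = f^a t^{p^{m+e}-1-a}\delta_{m+e}$ is the standard monomial basis $t^{p^{m+e}-1-a}\delta_{m+e}$ rescaled by the units $f^a \in R[f^{-1}]$. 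I define
$$\Phi_e: \bigoplus_{a=0}^{p^{m+e}-1} R[f^{-1}]\chi^{m+e}_a\bs{f^s} \xrightarrow{\ \sim\ } \tilde H^{m+e}_f, \qquad g\chi^{m+e}_a\bs{f^s} \longmapsto gQ^{m+e}_a,$$
which is an $R[f^{-1}]$-linear bijection of free modules on matched bases.

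Next I check compatibility with the transition maps. The transition in the source sends $\chi^{m+e}_a \mapsto \sum_{i=0}^{p-1} \chi^{m+e+1}_{a+ip^{m+e}}$ by Equation \eqref{eqn-chi-trans}, and the transition in the target sends $Q^{m+e}_a \mapsto \sum_{i=0}^{p-1} Q^{m+e+1}_{a+ip^{m+e}}$ by Equation \eqref{eqn-Q-trans}; these two identities have been set up so as to be literally the same formula. Passing to the colimit therefore produces an $R[f^{-1}]$-linear isomorphism $\Phi: C(\Z_p, R[f^{-1}])\bs{f^s} \xrightarrow{\sim} H_f$.

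To check $\Delta$-semilinearity I use that $C(\Z_p, \cD_R) = \bigcup_e \bigoplus_a \cD^{(F,e)}_R \chi^{m+e}_a$, so it suffices to verify compatibility with the action of elements of the form $P\chi^{m+e}_a$ with $P \in \cD^{(F,e)}_R$. This is immediate from a side-by-side comparison of Equations \eqref{eqn-action1} and \eqref{eqn-action-two}: both formulas have exactly the same shape, with $\chi^{m+e}_a\bs{f^s}$ on one side matched to $Q^{m+e}_a$ on the other, and both vanish off the diagonal $a=b$. For the normalization, the factorization $(f-t)(f^{p^m-1}+f^{p^m-2}t+\cdots+t^{p^m-1}) = f^{p^m}-t^{p^m}$ in $R[t]$ gives
$$\delta_0 = \sum_{a=0}^{p^m-1} f^{p^m-1-a}t^a\,\delta_m = \sum_{a=0}^{p^m-1} Q^m_a$$
after reindexing $a \mapsto p^m-1-a$, while on the source side $\bs{f^s}$ is represented at level $e=0$ by $1\cdot\bs{f^s} = \sum_{a=0}^{p^m-1}\chi^m_a\bs{f^s}$; hence $\Phi(\bs{f^s})=\delta_0$.

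There is no genuine obstacle here: every nontrivial identity needed has already been proved earlier in this subsection, and the proposition is essentially the statement that the two colimit presentations of $C(\Z_p, R[f^{-1}])\bs{f^s}$ and $H_f$ have been arranged in parallel. The only care required is bookkeeping the index $a \in \{0,\dots,p^{m+e}-1\}$ on both sides so that Equations \eqref{eqn-chi-trans} and \eqref{eqn-Q-trans} become the same transition rule, and so that Equations \eqref{eqn-action1} and \eqref{eqn-action-two} become the same action formula.
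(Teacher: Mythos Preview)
Your proof is correct and follows essentially the same approach as the paper: match the bases $\chi^{m+e}_a \leftrightarrow Q^{m+e}_a$ at each level, invoke Equations \eqref{eqn-chi-trans} and \eqref{eqn-Q-trans} for compatibility with the transition maps, and Equations \eqref{eqn-action1} and \eqref{eqn-action-two} for the $\Delta$-semilinearity. Your treatment is in fact slightly more complete than the paper's, since you explicitly verify the normalization $\Phi(\bs{f^s}) = \delta_0$ via the identity $\delta_0 = \sum_{a=0}^{p^m-1} Q^m_a$, which the paper's proof leaves implicit.
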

\begin{proof}
	As usual, fix an integer $m > 0$ such that $\fm^{m+1} = 0$ in $V$. Recall that we have
	\begin{align*}
		\Fun \left(\Z / p^{m + e} , R\left[f^{-1}\right]\right) & = \bigoplus_{ a = 0}^{p^{m + e} - 1} R\left[f^{-1}\right] \chi^{m+e}_a, \\
		H^{m+e}_f & = \bigoplus_{a = 0}^{p^{m+e} - 1} R\left[f^{-1}\right] Q^{m+e}_a,
	\end{align*}
	and thus exchanging the bases $\chi^{m+e}_a \leftrightarrow Q^{m+e}_a$ defines an $R[f^{-1}]$-linear isomorphism 
	$$\Fun\left(\Z / p^{m+e}, R\left[f^{-1}\right]\right) \cong H^{m+e}_f.$$ 

	By (\ref{eqn-action-two}) and (\ref{eqn-action1}), this isomorphism exchanges the action of $\Fun(\Z / p^{m+e}, \cD^{(F,e)}_R)$ with the action of $\cD^{(F, e)}_R \otimes_V (\cD^{(F, m + e)}_{V[t]})_0$. Moreover, by (\ref{eqn-chi-trans}) and (\ref{eqn-Q-trans}), these isomorphisms are compatible in $e \geq 0$ and therefore define an isomorphism between limits, which gives the required isomorphism. 
\end{proof}

\begin{corollary} \label{cor-Nf-C-desc}
	Let $N_f$ is the $(\cD_{R[t]})_0$-module defined in Section~{\rm\ref{subscn-malgrange-module-b-function}}. There is a $\Delta$-semilinear isomorphism
	$$\frac{C\left(\Z_p, \cD_R\right) \cdot \bs{f^s}}{ C\left(\Z_p, \cD_R\right) \cdot f \bs{f^s}} \cong N_f.$$
\end{corollary}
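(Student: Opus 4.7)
The plan is to bootstrap directly from the previous proposition, which already provides a $\Delta$-semilinear isomorphism $\alpha \colon C(\Z_p, R[f^{-1}]) \bs{f^s} \xrightarrow{\sim} H_f$ with $\alpha(\bs{f^s}) = \delta_0$. The corollary should follow by verifying that $\alpha$ carries the cyclic submodules $C(\Z_p, \cD_R) \cdot \bs{f^s}$ and $C(\Z_p, \cD_R) \cdot f \bs{f^s}$ onto $(\cD_{R[t]})_0 \cdot \delta_0$ and $(\cD_{R[t]})_0 \cdot f \delta_0$ respectively, and then passing to the quotient.

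First I would record the consequence of $\Delta$-semilinearity applied to $\bs{f^s}$: for every $\phi \in C(\Z_p, \cD_R)$, we have $\alpha(\phi \cdot \bs{f^s}) = \Delta(\phi) \cdot \delta_0$. Since $\Delta$ is a bijection onto $(\cD_{R[t]})_0$, this shows at once that $\alpha$ restricts to a bijection $C(\Z_p, \cD_R) \cdot \bs{f^s} \xrightarrow{\sim} (\cD_{R[t]})_0 \cdot \delta_0$.

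The crux of the argument is then to identify $\alpha(f \bs{f^s})$ with $f \delta_0$. I would handle this by viewing $f \in R \subseteq \cD_R$ as the constant function $\bar f \in C(\Z_p, \cD_R)$ with value $f$. Unwinding the definition of the twist $\xi_f$, we have $\xi_f(\bar f)(\alpha) = f^{-\alpha} f f^\alpha = f$, so $\bar f \cdot \bs{f^s}$ is exactly the element written $f \bs{f^s}$. On the other side, tracing through the definition of $\Delta$ (namely $C(\Z_p, \cD_R) \cong C(\Z_p, V) \otimes_V \cD_R \cong (\cD_{V[t]})_0 \otimes_V \cD_R \cong (\cD_{R[t]})_0$), the constant function $\bar f$ maps to $1 \otimes f$, which is just multiplication by $f$ in $(\cD_{R[t]})_0$. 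Semilinearity then gives $\alpha(f \bs{f^s}) = \Delta(\bar f) \cdot \delta_0 = f \delta_0$, and consequently $\alpha$ sends $C(\Z_p, \cD_R) \cdot f \bs{f^s}$ bijectively onto $(\cD_{R[t]})_0 \cdot f \delta_0$.

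With both inclusions identified, the quotient map descends to a well-defined $\Delta$-semilinear isomorphism between the two quotient modules, which is the content of the corollary. The main obstacle is purely bookkeeping: ensuring that the constant function interpretation of $f \bs{f^s}$ really coincides with the module element produced by acting on $\bs{f^s}$ in the twisted module $\xi_{f,*} C(\Z_p, R[f^{-1}])$, and that $\Delta$ sends the constant function $\bar f$ to the multiplication operator $f \in (\cD_{R[t]})_0$. Once these two identifications are in place, no further computation is needed.
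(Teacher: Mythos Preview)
Your proposal is correct and is precisely the argument the paper has in mind: the corollary is stated without proof immediately after the proposition establishing the $\Delta$-semilinear isomorphism $C(\Z_p, R[f^{-1}])\bs{f^s}\cong H_f$ exchanging $\bs{f^s}$ with $\delta_0$, and your write-up simply makes explicit the two bookkeeping checks (that $\Delta$ carries the constant function $\bar f$ to multiplication by $f$, and that $\bar f\cdot\bs{f^s}=f\bs{f^s}$ in the twisted module) needed to pass to the quotient.
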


\begin{corollary} \label{cor-bfunction-2}
	The $b$-function $B_f$ of $f$ is the ideal of\, $C(\Z_p, V)$ given as the annihilator 
	$$B_f = \Ann \left( C\left(\Z_p, \cD_R\right) \cdot \bs{f^s} \ \big/ \ C\left(\Z_p, \cD_R\right) \cdot f \bs{f^s} \right) \sq C(\Z_p, V).$$
\end{corollary}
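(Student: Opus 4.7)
The plan is essentially to unpack definitions: the statement is an immediate corollary of Corollary \ref{cor-Nf-C-desc}, so the work is just checking that the $\Delta$-semilinear isomorphism transports annihilators correctly. Recall that $B_f$ is defined as $\Ann(N_f) \sq C(\Z_p, V)$, where $N_f$ is viewed as a $C(\Z_p, V)$-module via restriction of scalars along the composition $C(\Z_p, V) \hookrightarrow C(\Z_p, \cD_R) \xrightarrow{\Delta} (\cD_{R[t]})_0$.

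First I would denote $M \ceq C(\Z_p, \cD_R) \cdot \bs{f^s} \big/ C(\Z_p, \cD_R) \cdot f \bs{f^s}$ and let $\alpha: M \xrightarrow{\sim} N_f$ be the $\Delta$-semilinear isomorphism provided by Corollary \ref{cor-Nf-C-desc}. By definition of $\Delta$-semilinearity, for every $\phi \in C(\Z_p, \cD_R)$ and every $u \in M$ we have $\alpha(\phi \cdot u) = \Delta(\phi) \cdot \alpha(u)$. In particular, restricting $\phi$ to lie in the subring $C(\Z_p, V) \sq C(\Z_p, \cD_R)$, this equality says that $\phi \cdot u = 0$ in $M$ if and only if $\Delta(\phi) \cdot \alpha(u) = 0$ in $N_f$, i.e., if and only if $\phi$ annihilates $\alpha(u)$ in $N_f$ viewed as a $C(\Z_p, V)$-module.

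Therefore, taking the intersection over all $u \in M$ (equivalently all $\alpha(u) \in N_f$, since $\alpha$ is bijective), we obtain
\[
\Ann_{C(\Z_p, V)}(M) = \Ann_{C(\Z_p, V)}(N_f) = B_f,
\]
which is exactly the claim. There is no obstacle here beyond carefully observing that the identification $\Delta: C(\Z_p, V) \hookrightarrow (\cD_{R[t]})_0$ (which encodes precisely how $N_f$ inherits its $C(\Z_p, V)$-module structure in the first place) is exactly the one used implicitly on the left-hand side through the inclusion $C(\Z_p, V) \sq C(\Z_p, \cD_R)$, so that $\Delta$-semilinearity indeed translates annihilators one-to-one.
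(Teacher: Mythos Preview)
Your proposal is correct and matches the paper's approach: the paper gives no explicit proof of this corollary, treating it as an immediate consequence of Corollary \ref{cor-Nf-C-desc}, and your argument simply spells out why the $\Delta$-semilinear isomorphism transports the $C(\Z_p, V)$-annihilator from $N_f$ to the quotient $M$.
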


\subsection{The modules \texorpdfstring{$\boldsymbol{R[f^{-1}] \bs{f^\alpha}}$}{R[f-1]fa}} \label{subscn-Rf-alpha}

Let $(V, \fm, \KK, F)$ be as in Setup~\ref{setup-V}, let $R$ be a $V$-algebra, and let $f \in R$ be a nonzerodivisor. Our next goal is to leverage the alternative description of the $C(\Z_p, V)$-module $N_f$ obtained in Corollary~\ref{cor-Nf-C-desc} to give new characterizations of the Bernstein--Sato roots of $f$. This is attained in Theorem~\ref{thm-BSR-equivalences}. 

Recall that a $p$-adic integer $\alpha \in \Z_p$ is a Bernstein--Sato root of $f$ whenever the stalk of $N_f$ at $\alpha$ is nonzero. Corollary~\ref{cor-Nf-C-desc} suggests that, to get a handle on the Bernstein--Sato roots, it would be useful to understand the stalks of the module $C(\Z_p, R[f^{-1}]) \bs{f^s}$; this is the goal of the current subsection. 

We start by recalling that $C(\Z_p, R[f^{-1}) \bs{f^s}$ is naturally a $C(\Z_p, V)$-module, and therefore it gives rise to a quasicoherent sheaf on $\Spec ( C(\Z_p, V)) \cong \Z_p$ (see Section~\ref{subscn-alg-cts-function}).

\begin{definition}
     Let $(V, \fm, \KK, F)$ be as in Setup~\ref{setup-V}, let $R$ be a $V$-algebra, and let $f \in R$ be a nonzerodivisor. Given a $p$-adic integer $\alpha \in \Z_p$, we let $R[f^{-1}] \bs{f^\alpha}$ be the stalk of $C(\Z_p, R[f^{-1}]) \bs{f^s}$ at $\alpha$; that is, 
     $$R[f^{-1}] \bs{f^\alpha} \ceq \left(C\left(\Z_p, R\left[f^{-1}\right]\right) \bs{f^s} \right)_\alpha.$$
     An element $g \in R[f^{-1}]$ will be denoted by $g \bs{f^\alpha}$ if we want to emphasize that we view $g$ as an element of $R[f^{-1}] \bs{f^\alpha}$. 
\end{definition}

By Proposition~\ref{prop-C-stalk-description} there is a surjective homomorphism 
\begin{align*}
	C\left(\Z_p, R\left[f^{-1}\right]\right) \bs{f^s} & \overset{e_\alpha}\lra  R\left[f^{-1}\right] \bs{f^\alpha} \\
	\tilde g \bs{f^s} & \longmapsto \tilde g(\alpha) \bs{f^\alpha}
\end{align*}
given by evaluation at $\alpha \in \Z_p$, which gives
$$\frac{ C\left(\Z_p, R\left[f^{-1}\right]\right) \bs{f^s} }{ (0: \alpha) C\left(\Z_p, R\left[f^{-1}\right]\right) \bs{f^s} } \cong R[f^{-1}] \bs{f^\alpha} .$$

Since $C(\Z_p, R[f^{-1}]) \bs{f^s}$ is a $C(\Z_p, \cD_{R[f^{-1}]})$-module, $R[f^{-1}] \bs{f^\alpha}$ is naturally a $\cD_{R[f^{-1}]}$-module. This structure is given as follows: for $P \in \cD_{R[f^{-1}]}$ and $g \in R[f^{-1}]$, we have
$$P \cdot\left(g \bs{f^\alpha}\right) = f^{-\alpha} P (f^\alpha g) \bs{f^\alpha}$$
(see Lemma~\ref{lemma-conj-welldef} and the discussion below it). Recall that if $e, m \geq 0$ are chosen such that $\fm^{m+1} = 0$ in $V$ and $P \in \cD^{(F, e)}_R$, then this means that $P \cdot (g \bs{f^\alpha}) = f^{-a} P (f^a g) \bs{f^\alpha}$, where $a \in \Z$ is such that $a \equiv \alpha \mod p^{e+m} \Z_p$.

Note that, in the case where $V$ is a field of positive characteristic and $\alpha \in \Z_{(p)}$ is rational, the modules $R[f^{-1}] \bs{f^\alpha}$ appear in the work of Blickle, \Mustata, and Smith \cite{BMSm-hyp} under the name $R_f e_{- \alpha}$.

\begin{remark} \label{rmk-Rf-compat}
	Let $(V, \fm, \KK, F)$ be as in Setup~\ref{setup-V}, let $R$ be a $V$-algebra of finite type with a compatible lift of Frobenius $F\colon R \to R$, and let $f \in R$ be a nonzerodivisor. Let $k \in \Z$ be an integer and $\alpha \in \Z_p$ be a $p$-adic integer. 
	\begin{enuroman}
	\item The assignment $[g \bs{f^{\alpha + k}} \mapsto g f^k \bs{f^\alpha}]$ gives a $\cD_{R[f^{-1}]}$-module isomorphism
	$$
		R\left[f^{-1}\right] \bs{f^{\alpha + k}} \overset{\lowsim}\lra R\left[f^{-1}\right] \bs{f^\alpha}.
        $$
	\item Suppose $k \geq 0$. Then the assignment $[g \bs{(f^k)^{\alpha}} \mapsto g \bs{f^{k \alpha}}]$ gives a $\cD_{R[f^{-1}]}$-linear isomorphism
	$$
		R\left[f^{-1}\right] \bs{(f^k)^{\alpha}} \overset{\lowsim}\lra R\left[f^{-1}\right] \bs{f^{k \alpha}}.
	$$
	\end{enuroman}
\end{remark}

\subsection{The \texorpdfstring{$\boldsymbol{\nu}$}{nu}-invariants} \label{subscn-nu-invt}

In this subsection we provide generalizations of the $\nu$-invariants introduced by \Mustata, Takagi, and Watanabe \cite{MTW}, which we will subsequently use to provide a useful characterization of Bernstein--Sato roots. Note that, when working over a field of positive characteristic, this characterization already appears in \cite{QG19,JNBQG}.

We slightly deviate from previous work by working in the module $R[f^{-1}]$ instead of $R$. This means that we allow our $\nu$-invariants to be negative and that, when constructing the invariants $\nu^J_f(F, p^e)$ below, we must allow for $J$ to be an $R$-submodule of $R[f^{-1}]$ satisfying $J[f^{-1}] = R[f^{-1}]$, instead of an ideal of $R$ satisfying $f \in \sqrt J$. By the translation property of $\nu$-invariants given in Proposition~\ref{prop-nu-basic-props}\eqref{pnbp-2}, these changes are purely cosmetic and do not affect the rest of the theory. 

Let $(V, \fm, \KK, F)$ be as in Setup~\ref{setup-V}, let $R$ be a $V$-algebra of finite type equipped with a lift of Frobenius $F\colon R \to R$, and let $f \in R$ be a nonzerodivisor. In this subsection we will assume that $R$ is smooth, so that Frobenius descent holds as given in Corollary~\ref{cor-frob-ideal-corresp}. 

Fix an integer $e \geq 0$. Under Frobenius descent (Corollary~\ref{cor-frob-ideal-corresp}), the chain of $\cD^{(F, e)}_R$-submodules of $R[f^{-1}]$ given by
$$\cdots \supseteq \cD_R^{(F, e)} \cdot f^{-2} \supseteq \cD_R^{(F, e)} \cdot f^{-1} \supseteq \cD_R^{(F, e)} \cdot 1 \supseteq \cD_R^{(F, e)} \cdot f^1 \supseteq \cD_R^{(F, e)} \cdot f^2 \supseteq \cdots$$
is in correspondence with the chain of $R$-submodules of $R[f^{-1}]$ given by
$$\cdots \supseteq \cC_R^{(F, e)} \cdot f^{-2} \supseteq \cC_R^{(F, e)} \cdot f^{-1} \supseteq \cC_R^{(F, e)} \cdot 1 \supseteq \cC_R^{(F, e)} \cdot f^1 \supseteq \cC_R^{(F, e)} \cdot f^2 \supseteq \cdots.$$
In particular, given an integer $n \in \Z$, the following are equivalent:
\begin{enualph}
\item\label{conda} We have $\cD^{(F, e)}_R \cdot f^n \neq \cD^{(F, e)}_R \cdot f^{n+1}$.
\item\label{condb} We have $\cC^{(F, e)}_R \cdot f^n \neq \cC^{(F, e)}_R \cdot f^{n+1}$.
\end{enualph}

\begin{definition}
	Let $(V, \fm, \KK, F)$ be as in Setup~{\rm\ref{setup-V}}, and let $R$ be a smooth $V$-algebra equipped with a compatible lift of Frobenius $F\colon R \to R$. Let $f \in R$ be a nonzerodivisor and $e \geq 0$ be an integer. We say that an integer $n \in \Z$ is a $\nu$-invariant of level $e$ for $f$ with respect to $F$ whenever the equivalent conditions~\eqref{conda} and~\eqref{condb} above hold. We denote by $\nu^\bullet_f(F, p^e)$ the set of all such $\nu$-invariants.
\end{definition}

Let $J \sq R[f^{-1}]$ be an $R$-submodule, and let $e \geq 0$ be an integer. By the smoothness of $R$, the functor $F^{e*}$ is exact (see Lemma~\ref{lemma-FeR-fgp}). The natural map $F^{e*}(J) \to F^{e*}(R[f^{-1}]) \cong R[f^{-1}]$ is therefore injective, and, by an abuse of notation, we identify $F^{e*}(J)$ with its image in $R[f^{-1}]$. Whenever $J$ is such that $J[f^{-1}] = R[f^{-1}]$, we let 
$$\nu^J_f(F, p^e) \ceq \max \left\{ n \geq 0 \ | \ f^n \notin F^{e*}(J) \right\}.$$
Note that the maximum is indeed attained, since $F^{e*}(J) [f^{-1}] = F^{e*}(J[f^{-1}]) = R[f^{-1}]$, and therefore we have $f^n \in F^{e*}(J)$ for some $n \gg 0$. 

\begin{proposition}\label{prop315}
	Let $(V, \fm, \KK, F)$ be as in Setup~{\rm\ref{setup-V}}, let $R$ be a smooth $V$-algebra equipped with a lift of Frobenius $F\colon R \to R$, let $f \in R$ be a nonzerodivisor, and let $e \geq 0$ be an integer.
	\begin{enuroman}
	\item\label{prop315-1} Given an $R$-submodule $J \sq R[f^{-1}]$ with $J[f^{-1}] = R[f^{-1}]$, the integer $\nu^J_f(F, p^e)$ is a $\nu$-invariant of level $e$ for $f$ with respect to $F$; that is, we have $\nu^J_f(F, p^e) \in \nu^\bullet_f(F, p^e)$. 

	\item\label{prop315-2} All such $\nu$-invariants arise this way; that is,
		$$\nu^\bullet_f(F, p^e) = \left\{ \nu^J_f(F, p^e) \ \middle\vert \ J \sq R\left[f^{-1}\right] \text{ an $R$-submodule with } J\left[f^{-1}\right] = R\left[f^{-1}\right] \right\}.$$
	\end{enuroman}
\end{proposition}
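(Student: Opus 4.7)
The plan is to use Frobenius descent (Corollary \ref{cor-frob-ideal-corresp}) as the sole translation tool between the two descriptions. Specifically, the inclusion-preserving bijection $I \mapsto F^{e*}(I)$ with inverse $J \mapsto \cC^{(F,e)}_R(J)$ between $R$-submodules of $R[f^{-1}]$ and $\cD^{(F,e)}_R$-submodules of $R[f^{-1}]$ lets me convert freely between the gap condition $\cD^{(F,e)}_R \cdot f^n \neq \cD^{(F,e)}_R \cdot f^{n+1}$ and the membership condition that $f^n \notin F^{e*}(J)$ while $f^{n+1} \in F^{e*}(J)$.

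For part (i), I would set $n \ceq \nu^J_f(F, p^e)$, so by definition $f^{n+1} \in F^{e*}(J)$ and $f^n \notin F^{e*}(J)$. Since $F^{e*}(J)$ is a $\cD^{(F,e)}_R$-submodule, the first statement upgrades to $\cD^{(F,e)}_R \cdot f^{n+1} \sq F^{e*}(J)$; the second then forces $f^n \notin \cD^{(F,e)}_R \cdot f^{n+1}$, so $\cD^{(F,e)}_R \cdot f^n \neq \cD^{(F,e)}_R \cdot f^{n+1}$ and therefore $n \in \nu^\bullet_f(F, p^e)$.

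For part (ii), given $n \in \nu^\bullet_f(F, p^e)$, I would realize $n$ via the explicit choice $J \ceq \cC^{(F,e)}_R(\cD^{(F,e)}_R \cdot f^{n+1})$, so that $F^{e*}(J) = \cD^{(F,e)}_R \cdot f^{n+1}$ by Frobenius descent. Then $f^k = f^{k-n-1} \cdot f^{n+1} \in F^{e*}(J)$ for all $k \geq n+1$ (using $R \sq \cD^{(F,e)}_R$), while $f^n \notin F^{e*}(J)$ by the chosen gap, so $\nu^J_f(F, p^e) = n$. To verify the standing hypothesis $J[f^{-1}] = R[f^{-1}]$, I would use Proposition \ref{prop-Frob-lift-localization} to extend $F$ to $R[f^{-1}]$, whereupon $F^{e*}$ commutes with localization at $f$; then $F^{e*}(J)[f^{-1}]$ contains the invertible element $f^{n+1}$ and equals $R[f^{-1}]$, and the bijectivity of $F^{e*}$ on submodules pulls this back to $J[f^{-1}] = R[f^{-1}]$.

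The main obstacle is really a matter of bookkeeping rather than technique. One needs to read the definition $\nu^J_f(F, p^e) = \max\{n : f^n \notin F^{e*}(J)\}$ as taking the max over $n \in \Z$, not just over $n \geq 0$, so that negative elements of $\nu^\bullet_f(F, p^e)$ can be realized by some admissible $J$. Apart from that, the whole argument is an exercise in applying the Frobenius descent bijection twice, once for each direction of the set equality in part (ii).
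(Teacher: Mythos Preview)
Your proposal is correct and follows essentially the same approach as the paper: both arguments use Frobenius descent to translate the membership condition $f^n \notin F^{e*}(J)$ into the gap $\cD^{(F,e)}_R \cdot f^n \neq \cD^{(F,e)}_R \cdot f^{n+1}$ for part (i), and both realize a given $\nu$-invariant $n$ in part (ii) via $J = \cC^{(F,e)}_R \cdot f^{n+1}$ (your $\cC^{(F,e)}_R(\cD^{(F,e)}_R \cdot f^{n+1})$ is the same submodule by Remark~\ref{rmk-1}). You are in fact slightly more careful than the paper in two places: you explicitly verify the hypothesis $J[f^{-1}] = R[f^{-1}]$ for the chosen $J$, and you correctly flag that the definition of $\nu^J_f(F,p^e)$ must range over $n \in \Z$ rather than $n \geq 0$ for part (ii) to capture negative $\nu$-invariants.
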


\begin{proof}
	Let $J \sq R[f^{-1}]$ be an $R$-submodule. Recall that $F^{e*}(J)$ is a $\cD^{(F, e)}_R$-submodule of $R[f^{-1}]$ and therefore, given an integer $n \in \Z$, we have $f^n \notin F^{e*}(J)$ if and only if $\cD^{(F, e)}_R \cdot f^n \not \sq F^{e*}(J)$. In particular, if $J[f^{-1}] = R[f^{-1}]$, then for $\nu \ceq \nu^J_f(F, p^e)$ we have $\cD^{(F, e)}_R \cdot f^\nu \not \sq F^{e*}(J)$ while $\cD^{(F, e)}_R \cdot f^{\nu + 1} \sq F^{e*}(J)$, which gives the inequality $\cD^{(F, e)}_R \cdot f^\nu \neq \cD^{(F, e)}_R \cdot f^{\nu + 1}$, proving~\eqref{prop315-1}. 

	For part~\eqref{prop315-2}, suppose we have some $\nu \in \nu^\bullet_f(F, p^e)$. This gives $\cC^{(F, e)}_R \cdot f^\nu \neq \cC^{(F, e)}_R \cdot f^{\nu + 1}$. Next, we let $J \ceq \cC^{(F, e)}_R \cdot f^{\nu + 1}$ and observe that, by Frobenius descent, we know that $F^{e*}(J) = \cD^{(F, e)}_R \cdot f^{\nu + 1}$. Since $\nu \in \nu^\bullet_f(F, p^e)$ by assumption, we have $f^\nu \notin F^{e*}(J)$; on the other hand, it is clear that $f^{\nu + 1} \in F^{e*}(J)$, and therefore $\nu = \nu^J_f(F, p^e)$. 
\end{proof}

\begin{proposition} \label{prop-nu-basic-props}
	Let $(V, \fm, \KK, F)$ be as in Setup~{\rm\ref{setup-V}}, let $R$ be a smooth $V$-algebra equipped with a lift of Frobenius $F\colon R \to R$, and let $f \in R$ be a nonzerodivisor. 
	\begin{enuroman}
	\item\label{pnbp-1} The $\nu$-invariants for $f$ with respect to $F$ form a descending chain
		$$\nu^\bullet_f(F, p^0) \spq \nu^\bullet_f(F, p^1) \spq \nu^\bullet_f(F, p^2) \spq  \cdots. $$
	\item\label{pnbp-2} Fix an integer $e \geq 0$, and let $m \geq 0$ be such that $\fm^{m+1} = 0$ in $V$. Then the set $\nu^\bullet_f(F, p^e)$ is invariant under translation by integer multiples of $p^{m + e}$; that is,
	$$\nu^\bullet_f(F, p^e) + \Z p^{m+e}  = \nu^\bullet_f(F, p^e).$$

	\item\label{pnbp-3} For every integer $e \geq 0$, the $\nu$-invariants of level $e$ with respect to $F$ for $f$ agree with those of level $e + 1$ for $F(f)$; that is,
	$$\nu^\bullet_f(F, p^e) = \nu^\bullet_{F(f)} (F, p^{e+1}).$$
	\end{enuroman}
\end{proposition}
\begin{proof}
	Part~\eqref{pnbp-1} follows from the inclusions $\cD^{(F, 0)}_R \sq \cD^{(F, 1)}_R \sq \cD^{(F, 2)}_R \sq \cdots$. 

	For part~\eqref{pnbp-2} it suffices to show that, given an integer $k \in \Z$ and a $\nu$-invariant $n \in \nu^\bullet_f(F, p^e)$, we have $n + k p^{e+m} \in \nu^\bullet_f(F, p^e)$. Since $n$ is a $\nu$-invariant, we have $\cD^{(F, e)}_R \cdot f^n \neq \cD^{(F, e)}_R \cdot f^{n+1}$. Also recall  that we have $f^{p^{e+m}} = F^e(f^{p^m})$ and therefore $f^{p^{e+m}}$ commutes with all operators in $\cD^{(F, e)}_R$. Combining these observations (and using the fact that $f$ is a nonzerodivisor), we get
	\begin{align*}
		\cD^{(F, e)}_R \cdot f^{n + k p^{e+m}} = f^{k p^{e+m}} \cD^{(F, e)}_R \cdot f^n \neq f^{k p^{e+m}} \cD^{(F, e)}_R \cdot f^{n+1} = \cD^{(F, e)}_R \cdot f^{n + kp^{e+m} + 1},
	\end{align*}
	which gives $n + k p^{e+m} \in \nu^\bullet_f(F, p^e)$ as required. 

	Part~\eqref{pnbp-3} follows from a direct application of Corollary~\ref{cor-frob-cartier-inverse}.
\end{proof}

\subsection{Alternative characterizations of Bernstein--Sato roots}

Let $(V, \fm, \KK, F)$ be as in Setup~\ref{setup-V}, let $R$ be a $V$-algebra equipped with a compatible lift of Frobenius $F\colon R \to R$, and let $f \in R$ be a nonzerodivisor. In this subsection we provide two alternative characterizations of Bernstein--Sato roots of $f$. One of them uses the $\cD_R$-modules $R[f^{-1}] \bs{f^\alpha}$ introduced in Section~\ref{subscn-Rf-alpha}, while the other uses the $\nu$-invariants introduced in Section~\ref{subscn-nu-invt}. 

Let $N_f$ be the $C(\Z_p, V)$-module given in Section~\ref{subscn-malgrange-module-b-function} (see Corollary~\ref{cor-Nf-C-desc} for an alternative description), and recall that a $p$-adic integer $\alpha \in \Z_p$ is a Bernstein--Sato root of $f$ whenever the stalk $(N_f)_\alpha$ is nonzero. 

\begin{theorem} \label{thm-BSR-equivalences}
	Let $(V, \fm, \KK, F)$ be as in Setup~{\rm\ref{setup-V}}, let $R$ be a smooth $V$-algebra equipped with a compatible lift of Frobenius $F\colon R \to R$, and let $f \in R$ be a nonzerodivisor. Fix an integer $m \geq 0$ such that $\fm^{m+1} = 0$ in $V$. For a $p$-adic integer $\alpha \in \Z_p$, the following are equivalent:
	\begin{enualph}
	\item\label{t-BSR-e-a} The $p$-adic integer $\alpha \in \Z_p$ is a Bernstein--Sato root of $f$; that is, $(N_f)_\alpha \neq 0$. 
	\item\label{t-BSR-e-b} In the $\cD_R$-module $R[f^{-1}] \bs{f^\alpha}$, we have $\bs{f^\alpha} \notin \cD_R \cdot f \bs{f^\alpha}$.
	\item\label{t-BSR-e-c} For every integer $e \geq 0$, we have $(\alpha + p^{e+m} \Z_p) \cap \Z \sq \nu^\bullet_f(F, p^e)$. 
	\item\label{t-BSR-e-d} There is an integer sequence $(\nu_e)_{e = 0}^\infty \sq \Z$ such that $\nu_e \in \nu^\bullet_f(F, p^e)$ for every $e \geq 0$, and whose $p$-adic limit is $\alpha$. 
	\end{enualph}
\end{theorem}
\begin{proof}
By Corollary~\ref{cor-Nf-C-desc} we have an isomorphism 
$$(N_f)_\alpha \cong \frac{\left( C\left(\Z_p, \cD_R\right) \cdot \bs{f^s} \right)_\alpha}{ \left( C\left(\Z_p, \cD_R\right) \cdot f \bs{f^s} \right)_\alpha }. $$
Now recall that $R[f^{-1}] \bs{f^\alpha}$ is the stalk of $C(\Z_p, R[f^{-1}]) \bs{f^s}$ at $\alpha \in \Z_p$, from which we obtain
\begin{align*}
	\left( C\left(\Z_p, \cD_R\right) \cdot \bs{f^s} \right)_\alpha & = \left( \im \left( C\left(\Z_p, \cD_R\right) \xrightarrow{\bs{f^s}} C\left(\Z_p, R\left[f^{-1}\right]\right)  \bs{f^s} \right)	\right)_\alpha \\
		& = \im \left( C\left(\Z_p, \cD_R\right)_\alpha \xrightarrow{\bs{f^s}} \left(C\left(\Z_p, R\left[f^{-1}\right]\right) \bs{f^s}\right)_\alpha \right)\\
		& = \im \left( \cD_R \xrightarrow{\bs{f^\alpha}} R\left[f^{-1}\right] \bs{f^\alpha} \right) \\
		& = \cD_R \cdot \bs{f^\alpha}.
\end{align*}
Similarly, we obtain that $\left(C(\Z_p, \cD_R) \cdot f \bs{f^s} \right)_\alpha  = \cD_R \cdot f \bs{f^\alpha}$,
from which we conclude that 
$$\left(N_f\right)_\alpha \cong \frac{\cD_R \cdot \bs{f^\alpha}}{\cD_R \cdot f \bs{f^\alpha}},$$
which gives the equivalence between~\eqref{t-BSR-e-a} and~\eqref{t-BSR-e-b}. 

Suppose \eqref{t-BSR-e-b} is false, and fix an $e \gg 0$ such that $\bs{f^\alpha} \in \cD^{(F, e)}_R \cdot f \bs{f^\alpha}$. Fix an  $a \in \Z$ with the property that $\alpha \equiv a \mod p^{e+m} \Z_p$, for example by truncating the $p$-adic expansion of $\alpha$. We then have (see Section~\ref{subscn-Rf-alpha}) 
$$\bs{f^\alpha} \in \cD^{(F, e)}_R \cdot f \bs{f^\alpha} = f^{-a} \left(\cD^{(F, e)}_R \cdot f^{a+1}\right) \bs{f^\alpha},$$
and therefore $1 \in f^{-a}(\cD^{(F, e)}_R \cdot f^{a+1})$ or, equivalently, $f^a \in \cD^{(F, e)}_R \cdot f^{a+1}$. But this gives that $\cD^{(F, e)}_R \cdot f^a = \cD^{(F, e)}_R \cdot f^{a+1}$, and therefore $a \notin \nu^\bullet_f(F, p^e)$. This shows that~\eqref{t-BSR-e-c} implies~\eqref{t-BSR-e-b}. 

Conversely, suppose~\eqref{t-BSR-e-b} holds, fix an integer $e \geq 0$, and let $a \in (\alpha + p^{e+m} \Z_p) \cap \Z$; in other words, let $a \in \Z$ be an integer such that $a \equiv \alpha \mod p^{e+m} \Z_p$. As before, we have  $\cD^{(F, e)}_R \cdot f \bs{f^\alpha} = f^{-a}( \cD^{(F, e)}_R \cdot f^{a+1}) \bs{f^\alpha}$, and from the assumption that $\bs{f^\alpha} \notin \cD^{(F, e)}_R \cdot f \bs{f^\alpha}$, we conclude that $f^a \notin \cD^{(F, e)}_R \cdot f^{a+1}$. Therefore, we have $\cD^{(F, e)}_R \cdot f^a \neq \cD^{(F, e)}_R \cdot f^{a+1}$, and hence $a \in \nu^\bullet_f(F, p^e)$. We conclude that~\eqref{t-BSR-e-b} implies~\eqref{t-BSR-e-c}. 

That~\eqref{t-BSR-e-c} implies~\eqref{t-BSR-e-d} is straightforward: if we choose $\nu_e$ to be any integer with $\alpha \equiv \nu_e \mod p^{e+m} \Z_p$, then the $p$-adic limit of $(\nu_e)_{e = 0}^\infty$ is $\alpha$, and~\eqref{t-BSR-e-c} gives that $\nu_e \in \nu^\bullet_f(F, p^e)$. 

To finish the proof, assume that a sequence $(\nu_e)_{e = 0}^\infty$ as in~\eqref{t-BSR-e-d} exists. Since the $p$-adic limit of $\nu_e$ is $\alpha$, we may pass to a subsequence to assume that $\alpha \equiv \nu_e \mod p^{e+m} \Z_p$ (note that, by Proposition~\ref{prop-nu-basic-props}\eqref{pnbp-1}, the property $\nu_e \in \nu^\bullet_f(F, p^e)$ is preserved under passage to a subsequence). From the fact that $\alpha \equiv \nu_e \mod p^{e+m} \Z_p$ we conclude that $\alpha + p^{e+m} \Z_p = \nu_e + p^{e+m} \Z_p$ for every $e \geq 0$. On the other hand, since the $\nu_e$ are integers, we have $(\nu_e + p^{e+m} \Z_p) \cap \Z = \nu_e + p^{e+m} \Z$. Putting these together, we obtain
$$\left(\alpha + p^{e+m} \Z_p\right) \cap \Z = \left(\nu_e + p^{e+m} \Z_p\right) \cap \Z = \nu_e + p^{e+m} \Z.$$
Finally, by Proposition~\ref{prop-nu-basic-props}\eqref{pnbp-2} we conclude that $\nu_e + p^{e+m} \Z \sq \nu^\bullet_f(F, p^e)$. We conclude that~\eqref{t-BSR-e-d} implies~\eqref{t-BSR-e-c}, which completes the proof. 
\end{proof}

\begin{corollary} \label{cor-truncation-sequence}
	Suppose $\alpha \in \Z_p$ is a Bernstein--Sato root of $f$, and let $\alpha = \alpha_0 + p \alpha_1 + p^2 \alpha_2 + \cdots$ be its $p$-adic expansion. For every $e \geq 0$, let $\alpha_{< e+m}$ be the right truncation
	$$\alpha_{< e+m} \ceq \alpha_0 + p \alpha_1 + p^2 \alpha_2 + \cdots + p^{e+m-1} \alpha_{e + m - 1}.$$
	Then we have $\alpha_{< e+m} \in \nu^\bullet_f(F, p^e)$ for every $e \geq 0$.
\end{corollary}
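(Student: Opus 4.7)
The plan is to deduce this as a direct consequence of the equivalence (a)$\Leftrightarrow$(c) in Theorem \ref{thm-BSR-equivalences}. Since $\alpha$ is by hypothesis a Bernstein-Sato root of $f$, condition (c) of that theorem gives us the inclusion
\[
	(\alpha + p^{e+m} \Z_p) \cap \Z \ \sq \ \nu^\bullet_f(F, p^e)
\]
for every $e \geq 0$. So the only thing to check is that the truncation $\alpha_{<e+m}$ belongs to the left-hand side.

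First I would observe that $\alpha_{<e+m}$ is by construction a nonnegative integer (a finite sum of the form $\sum_{i=0}^{e+m-1} p^i \alpha_i$ with $0 \leq \alpha_i < p$), so it lies in $\Z$. Next, the tail of the $p$-adic expansion gives
\[
	\alpha - \alpha_{<e+m} \ = \ p^{e+m} \alpha_{e+m} + p^{e+m+1} \alpha_{e+m+1} + \cdots \ \in \ p^{e+m} \Z_p,
\]
so $\alpha_{<e+m} \in \alpha + p^{e+m} \Z_p$. Combining, $\alpha_{<e+m} \in (\alpha + p^{e+m} \Z_p) \cap \Z$, and the inclusion above yields $\alpha_{<e+m} \in \nu^\bullet_f(F, p^e)$, as desired.

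There is no real obstacle here — the work has all been done in Theorem \ref{thm-BSR-equivalences}, and this corollary simply records the most natural explicit sequence $(\nu_e)_{e=0}^\infty$ witnessing condition (d) of that theorem, namely the sequence of truncations of the $p$-adic expansion of $\alpha$.
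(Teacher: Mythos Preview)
Your proof is correct and matches the paper's own argument essentially verbatim: both invoke condition (c) of Theorem \ref{thm-BSR-equivalences} and observe that the truncation $\alpha_{<e+m}$ is an integer lying in $\alpha + p^{e+m}\Z_p$. There is nothing to add.
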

\begin{proof}
	Clearly we have $\alpha_{< e + m} \equiv \alpha \mod p^{e+m} \Z_p$, or, in other words, we have $\alpha_{< e+ m} \in \alpha + p^{e+m} \Z_p$. The statement then follows from Theorem~\ref{thm-BSR-equivalences}\eqref{t-BSR-e-c}.
\end{proof}

\begin{corollary} \label{cor-BSR-cd-F-indep}
	The collection of $p$-adic integers $\alpha$ satisfying condition~\eqref{t-BSR-e-c} $($resp.~\eqref{t-BSR-e-d}$)$ of Theorem~{\rm\ref{thm-BSR-equivalences}} is independent of the choice of\, $F$.
\end{corollary}
\begin{proof} 
	This collection is the collection of Bernstein--Sato roots of $f$, and the definition of Bernstein--Sato roots makes no reference to $F$.
\end{proof}

Despite what Corollaries~\ref{cor-truncation-sequence} and~\ref{cor-BSR-cd-F-indep} might suggest, it is not true that the $\nu$-invariants $\nu^\bullet_f(F, p^e)$ are independent of the choice of lift of Frobenius $F$. As an example, let $p = 2$ and $R = (\Z / 4)[x,y]$ with the lifts of Frobenius $F_i\colon R \to R$ given by $F_1(x) = x^2$, $F_1(y) = y^2$, $F_2(x) = x^2 + 2y(x+y)$, and $F_2(y) = y^2$. By Proposition~\ref{prop-nu-basic-props}\eqref{pnbp-2}, for an arbitrary nonzerodivisor $f \in R$, the $\nu$-invariants $\nu^\bullet_f(F_i, p^1)$ of level one are uniquely determined by $\nu^\bullet_f(F_i, p^1) \cap [0, 3]$, which we give for $f = x$. On the one hand, one has $\nu^\bullet_x(F_1, p^1) \cap [0,3] = \{1,3\}$. On the other hand, note that $F_2$ is engineered so that the change of coordinates $[(x,y) \mapsto (x+y, y)]$ exchanges $F_2$ with $F_1$. One can easily compute $\nu^\bullet_{x+y}(F_1, p^1) = \{1,2,3\}$  and therefore $\nu^\bullet_x(F_2, p^1) = \nu^\bullet_{x+y}(F_1, p^1) = \{1,2,3\}$.

\begin{corollary} \label{cor-BSR-f-Ff}
	The polynomials $f$ and $F(f)$ have the same Bernstein--Sato roots.
\end{corollary}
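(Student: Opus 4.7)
My plan is to deduce this corollary directly from the equivalence (a) $\Leftrightarrow$ (d) in Theorem \ref{thm-BSR-equivalences} combined with the translation identity $\nu^\bullet_f(F, p^e) = \nu^\bullet_{F(f)}(F, p^{e+1})$ recorded in Proposition \ref{prop-nu-basic-props}(iii). The key observation is that Theorem \ref{thm-BSR-equivalences}(d) characterizes Bernstein-Sato roots purely in terms of $p$-adic limits of sequences $(\nu_e)_{e=0}^\infty$ with $\nu_e \in \nu^\bullet_f(F, p^e)$, so shifting the indexing by one should exchange the sequences for $f$ with those for $F(f)$.

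More concretely, suppose $\alpha \in \Z_p$ is a Bernstein-Sato root of $f$. By Theorem \ref{thm-BSR-equivalences}(d) there exists a sequence $(\nu_e)_{e=0}^\infty$ with $\nu_e \in \nu^\bullet_f(F, p^e)$ and $\nu_e \to \alpha$ in $\Z_p$. Applying Proposition \ref{prop-nu-basic-props}(iii) gives $\nu_e \in \nu^\bullet_{F(f)}(F, p^{e+1})$, so defining $\mu_{e+1} \ceq \nu_e$ for $e \geq 0$ and choosing $\mu_0$ to be any element of $\nu^\bullet_{F(f)}(F, p^0)$ (which contains $\mu_1$ by the chain inclusion in Proposition \ref{prop-nu-basic-props}(i), so we can simply take $\mu_0 \ceq \mu_1$), I obtain a sequence $(\mu_e)_{e=0}^\infty$ with $\mu_e \in \nu^\bullet_{F(f)}(F, p^e)$ whose $p$-adic limit is still $\alpha$. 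Applying the reverse direction of Theorem \ref{thm-BSR-equivalences}, I conclude that $\alpha$ is a Bernstein-Sato root of $F(f)$.

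For the converse, I run the same argument in reverse: starting from a sequence $(\mu_e)_{e=0}^\infty$ with $\mu_e \in \nu^\bullet_{F(f)}(F, p^e)$ converging to $\alpha$, the identity of Proposition \ref{prop-nu-basic-props}(iii) gives $\mu_{e+1} \in \nu^\bullet_f(F, p^e)$, so $\nu_e \ceq \mu_{e+1}$ produces a sequence witnessing $\alpha$ as a Bernstein-Sato root of $f$ (convergence is preserved, since reindexing a $p$-adically convergent sequence by a fixed shift does not change its limit).

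There is no real obstacle here; the corollary is essentially a reformulation of Proposition \ref{prop-nu-basic-props}(iii) in the language of Bernstein-Sato roots, and Theorem \ref{thm-BSR-equivalences} is the bridge. The only point that requires a word of care is that the characterization in (d) ranges $e$ from $0$ rather than from $1$, so one must manufacture an initial term $\mu_0$, which is harmless thanks to the inclusion $\nu^\bullet_{F(f)}(F, p^0) \supseteq \nu^\bullet_{F(f)}(F, p^1)$.
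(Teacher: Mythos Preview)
Your proof is correct and follows exactly the same approach as the paper: the paper's proof is the single sentence ``This follows from description (d) together with Proposition \ref{prop-nu-basic-props}(iii),'' and you have simply unpacked the reindexing argument that makes this work. Your extra care with the initial term $\mu_0$ via Proposition \ref{prop-nu-basic-props}(i) is a nice touch that the paper leaves implicit.
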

\begin{proof}
	This follows from description~\eqref{t-BSR-e-d} of  Theorem~\ref{thm-BSR-equivalences} together with Proposition~\ref{prop-nu-basic-props}\eqref{pnbp-3}.
\end{proof}

\section{Properties of Bernstein--Sato roots}

\subsection{Finiteness and rationality}

For the whole of this subsection, we let $(V, \fm, \KK, F)$ be as in Setup~\ref{setup-V}, let $R \ceq V[x_1, \dots , x_n]$ be a polynomial ring over $V$, and let $f \in R$ be a nonzerodivisor. 

The main goal here is to prove that there are only a finite number of Bernstein--Sato roots of $f$, all of which are rational (recall that the ring $\Z_{(p)}$ of rational numbers whose denominator is not divisible by $p$ is naturally a subring of $\Z_p$, and that a $p$-adic integer is said to be rational whenever it lies in this subring). 

Note that we need to restrict to the case where $R$ is a polynomial ring over $V$ because the proof crucially uses the degree filtration. Nonetheless, we expect the same result to hold whenever $R$ is an arbitrary smooth $V$-algebra. 

As usual, we fix an integer $m \geq 0$ such that $\fm^{m+1} = 0$ in $V$. 

\begin{lemma} \label{lemma-nu-bound}
	Let $F\colon R \to R$ be the lift of Frobenius given by $F(x_i) = x_i^p$ for all $i = 1, \dots , n$. Then there is a constant $K \geq 0$ such that, for every $e \geq 0$, we have
	$$\# \left( \nu^\bullet_f(F, p^e) \cap [0, p^{e+m}) \right) \leq K.$$
\end{lemma}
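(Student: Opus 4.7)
The plan is to bound the number of $\nu$-invariants in $[0, p^{e+m})$ by controlling the degrees of generators of the Cartier ideals $\cC^{(F,e)}_R \cdot f^\nu$ and exploiting the artinian hypothesis on $V$.

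First I would set $d \ceq \deg f$ and $D \ceq d \cdot p^m$, and consider the descending chain of $R$-ideals
\[
R = \cC^{(F,e)}_R \cdot f^0 \supseteq \cC^{(F,e)}_R \cdot f^1 \supseteq \cdots \supseteq \cC^{(F,e)}_R \cdot f^{p^{e+m}}.
\]
By definition, the number of strict inclusions in this chain is exactly $\#\big(\nu^\bullet_f(F, p^e) \cap [0, p^{e+m})\big)$, so it suffices to bound this number uniformly in $e$. Since $(f^\nu)$ is generated in degrees $\leq \nu d$, Proposition \ref{prop-cartier-degrees} tells us that $\cC^{(F,e)}_R \cdot f^\nu$ is generated in degrees $\leq \nu d / p^e$. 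For $0 \leq \nu \leq p^{e+m}$, this bound is $\leq D$, so every ideal in the chain is generated in degrees $\leq D$.

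Next I would use that an ideal $I \sq R$ generated in degrees $\leq D$ satisfies $I = R \cdot (I \cap R_{\leq D})$, so $I$ is determined by the $V$-submodule $I \cap R_{\leq D} \sq R_{\leq D}$; in particular, a strict inclusion $I \supsetneq I'$ between such ideals forces a strict inclusion of the corresponding $V$-submodules. Applying this to our chain produces a chain of $V$-submodules of $R_{\leq D}$ with at least as many strict inclusions as the original. Now $R_{\leq D}$ is $V$-free of finite rank (equal to $\binom{D+n}{n}$), and $V$ is artinian, so $R_{\leq D}$ has finite length as a $V$-module. Setting
\[
K \ceq \mathrm{length}_V(R_{\leq D}),
\]
any strict descending chain of $V$-submodules of $R_{\leq D}$ has length at most $K$, which gives the required bound.

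The only conceptual subtlety is verifying that strict inclusions of $R$-ideals in the chain translate into strict inclusions of $V$-submodules — this is precisely where the degree bound from Proposition \ref{prop-cartier-degrees} does the real work, since without it there is no reason the ideals should be determined by a fixed finite-length piece. Beyond that the argument is essentially bookkeeping: the degree bound $\nu d / p^e \leq D$ for $\nu$ in the relevant range is the reason the constant $K$ does not depend on $e$, and the artinian hypothesis on $V$ (together with $R$ being a polynomial ring) supplies the finiteness of $\mathrm{length}_V(R_{\leq D})$.
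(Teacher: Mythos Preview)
Your proposal is correct and essentially identical to the paper's proof: both bound the degrees of generators of $\cC^{(F,e)}_R \cdot f^\nu$ by $d p^m$ via Proposition~\ref{prop-cartier-degrees}, translate the chain of ideals into a chain of $V$-submodules of $R_{\leq d p^m}$, and take $K$ to be the $V$-length of this finite-rank free module. The only cosmetic difference is that you name the bound $D = d p^m$ and record the rank $\binom{D+n}{n}$ explicitly.
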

\begin{proof}
	We want to find a $K$ such that, for every $e \geq 0$, the chain of ideals
	$$\cC^{(F, e)}_R \cdot f^0 \spq \cC^{(F, e)}_R \cdot f^1 \spq \cC^{(F, e)}_R \cdot f^2 \spq \cdots \spq \cC^{(F, e)}_R \cdot f^{p^{e+m}}$$
	has at most $K$ proper inclusions. 

	Suppose $f$ has degree at most $d$ (see the discussion above Proposition~\ref{prop-cartier-degrees}). By Proposition~\ref{prop-cartier-degrees} the ideal $\cC^{(F, e)}_R \cdot f^k$ is generated in degrees at most $kd / p^e$, and therefore every ideal in the chain is generated in degrees at most $d p^m$. It follows that $\cC^{(F, e)}_R \cdot f^k = \cC^{(F, e)}_R \cdot f^{k+1}$ if and only if $( \cC^{(F, e)}_R \cdot f^k ) \cap [R]_{\leq dp^m} = ( \cC^{(F, e)}_R \cdot f^{k+1} ) \cap [R]_{\leq dp^m}$, and hence the number of proper inclusions in the chain above is the same as the number of proper inclusions in the following chain of $V$-modules:
	$$\left( \cC^{(F, e)}_R \cdot f^0 \right) \cap [R]_{\leq d p^m} \spq \left( \cC^{(F, e)}_R \cdot f^1 \right) \cap [R]_{\leq d p^m} \spq \cdots \spq \left( \cC^{(F, e)}_R \cdot f^{p^{e+m}} \right) \cap [R]_{\leq dp^m}.$$
	Now note that this is a chain of $V$-submodules of $[R]_{\leq d p^m}$. Note that $[R]_{\leq dp^m}$ has finite length as a $V$-module; letting $K$ be this length, we conclude that there can be no more than $K$ proper inclusions in this chain. 
\end{proof}

\begin{theorem} \label{thm-BSR-fin}
	Let $(V, \fm, \KK, F)$ be as in Setup~{\rm\ref{setup-V}}, let $R \ceq V[x_1, \dots , x_n]$ be a polynomial ring over $V$, and let $f \in R$ be a nonzerodivisor. There are only finitely many Bernstein--Sato roots of $f$.
\end{theorem}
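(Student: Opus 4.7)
The plan is to combine the uniform bound from Lemma \ref{lemma-nu-bound} with the truncation statement from Corollary \ref{cor-truncation-sequence} to produce the finiteness. Fix $K \geq 0$ as in Lemma \ref{lemma-nu-bound}, so that
\[
\#\big(\nu^\bullet_f(F, p^e) \cap [0, p^{e+m})\big) \leq K
\]
for every $e \geq 0$. I claim that $f$ has at most $K$ Bernstein-Sato roots.

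Suppose for contradiction that there exist distinct Bernstein-Sato roots $\alpha_1, \dots, \alpha_{K+1} \in \Z_p$. Since they are pairwise distinct in $\Z_p$, their $p$-adic expansions disagree in some digit, and so there exists an integer $e \geq 0$ large enough that the right truncations
\[
(\alpha_i)_{< e + m} \in \{0, 1, \dots, p^{e+m} - 1\}
\]
are pairwise distinct for $i = 1, \dots, K+1$. By Corollary \ref{cor-truncation-sequence}, each $(\alpha_i)_{< e+m}$ lies in $\nu^\bullet_f(F, p^e)$, and by construction it lies in $[0, p^{e+m})$. This produces at least $K + 1$ distinct elements in $\nu^\bullet_f(F, p^e) \cap [0, p^{e+m})$, contradicting Lemma \ref{lemma-nu-bound}.

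There is no serious obstacle here: the two key ingredients (the uniform bound in $e$ and the fact that truncations of Bernstein-Sato roots are $\nu$-invariants) have already been established, and the argument is just a pigeonhole on $p$-adic truncations. The only subtle point to be careful about is that Lemma \ref{lemma-nu-bound} truly gives a bound uniform in $e$; once this is granted, choosing $e$ large enough to separate any finite collection of distinct $p$-adic integers immediately forces the finiteness conclusion.
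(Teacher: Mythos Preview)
Your proof is correct and follows essentially the same approach as the paper's: both use the uniform bound $K$ from Lemma \ref{lemma-nu-bound}, assume $K+1$ distinct Bernstein-Sato roots, choose a level $e$ large enough to separate them $p$-adically, and then invoke Corollary \ref{cor-truncation-sequence} to obtain $K+1$ distinct $\nu$-invariants in $[0,p^{e+m})$, a contradiction. The only cosmetic difference is that the paper phrases the separation step in terms of disjoint $p$-adic balls rather than distinct truncations, which amounts to the same thing.
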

\begin{proof}
	Let $K$ be the bound given in Lemma~\ref{lemma-nu-bound}. We claim there can be at most $K$ Bernstein--Sato roots of~$f$.

	For a contradiction, suppose that there are at least $K+1$ distinct Bernstein--Sato roots. Call them $\alpha_1, \dots , \alpha_{K+1}$, and choose an integer $a \geq 0$ such that $(\alpha_i + p^{a+m} \Z_p) \cap (\alpha_j + p^{a+m} \Z_p) = \emptyset$ for $i \neq j$ (in metric terms, we are choosing a radius so that balls of that radius centered at the $\alpha_i$ are disjoint).

	For every $i = 1, \dots , K+1$ and every $e \geq 0$, let $\nu_{i,e}$ be the truncation of the $p$-adic expansion of $\alpha_i$ at the $(e+m)$-digit, as in Corollary~\ref{cor-truncation-sequence}. For every $i$ and $e$, we clearly have $\nu_{i, e} \in \alpha_i + p^{e+m} \Z_p$ and $0 \leq \nu_{i,e} < p^{e+m}$, and by Corollary~\ref{cor-truncation-sequence} we also know that $\nu_{i, e} \in \nu^\bullet_f(F, p^e)$. We conclude that $\nu_{1, a}, \dots , \nu_{K+1,a}$ are distinct $\nu$-invariants of level $a$, all in the interval $[0, p^{a+m})$, thus giving a contradiction to the bound from Lemma~\ref{lemma-nu-bound}.
\end{proof}

\begin{corollary} \label{cor-Nf-discrete}
	The module $N_f$ described in Section~{\rm\ref{subscn-malgrange-module-b-function}} splits as a direct sum of stalks
	$$N_f \cong \left(N_f\right)_{\alpha_1} \oplus \cdots \oplus \left(N_f\right)_{\alpha_s},$$
	where $\alpha_1, \dots , \alpha_s$ are the Bernstein--Sato roots of $f$ and the $b$-function $B_f$ of $f$ takes the form 
	$$B_f = (\fa_1 : \alpha_1) \cdot (\fa_2 : \alpha_2) \cdots (\fa_s : \alpha_s)$$
	for some ideals $\fa_1, \dots , \fa_s \sq V$.
\end{corollary}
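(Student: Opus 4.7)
The plan is simply to combine Theorem \ref{thm-BSR-fin} with Proposition \ref{prop-disc-C-module}, so this should be a short argument with essentially no real obstacle.

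First I would recall the $C(\Z_p, V)$-module structure on $N_f$. The ring $(\cD_{R[t]})_0$ acts on $N_f$ by construction, and via the isomorphism $C(\Z_p, V) \cong (\cD_{V[t]})_0 \hookrightarrow (\cD_{R[t]})_0$ from Subsection \ref{subscn-alg-cts-function} we get, by restriction of scalars, a $C(\Z_p, V)$-module structure on $N_f$. In particular the notion of stalk $(N_f)_\alpha$ (as in Proposition \ref{prop-C-stalk-description}) is well-defined, and the Bernstein-Sato roots are by Definition \ref{def-BSR} exactly the $\alpha \in \Z_p$ for which $(N_f)_\alpha \neq 0$.

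Next I would invoke Theorem \ref{thm-BSR-fin} to conclude that there are only finitely many Bernstein-Sato roots, say $\alpha_1, \dots, \alpha_s$. This is exactly the finite-support hypothesis required by Proposition \ref{prop-disc-C-module}.

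Finally I would apply Proposition \ref{prop-disc-C-module} directly to $M = N_f$. Its first conclusion gives the natural isomorphism
\[
N_f \;\xrightarrow{\sim}\; (N_f)_{\alpha_1} \oplus \cdots \oplus (N_f)_{\alpha_s},
\]
and its second conclusion gives ideals $\fa_1, \dots, \fa_s \sq V$ with
\[
\Ann_{C(\Z_p, V)}(N_f) = (\fa_1 : \alpha_1) \cdot (\fa_2 : \alpha_2) \cdots (\fa_s : \alpha_s).
\]
Since $B_f = \Ann(N_f)$ by definition of the $b$-function, this is the claimed formula for $B_f$, completing the proof. There is no hard step; the entire content is already contained in the two cited results, and the corollary is really just a packaging of them.
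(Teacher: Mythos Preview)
Your proposal is correct and matches the paper's own proof exactly: the paper simply states that the corollary follows by combining Theorem \ref{thm-BSR-fin} with Proposition \ref{prop-disc-C-module}. Your write-up just makes the packaging explicit, but there is no difference in approach.
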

\begin{proof}
	The statement follows by combining Theorem~\ref{thm-BSR-fin} and Proposition~\ref{prop-disc-C-module}.
\end{proof}

Having addressed the finiteness issue, we turn to rationality. The proof will proceed by first showing the statement for nonzerodivisors $f \in R$ for which there is a lift of Frobenius $F\colon R \to R$ with the property that $F(f) = f^p$; this assumption allows the characteristic-$p$ proof given in \cite{JNBQG} to go through. In particular, this proves that the rationality statement holds for $f^{p^m}$ with $f \in R$ arbitrary (see Lemma~\ref{lemma-Frob-p-power}), and the proof is finished by comparing the Bernstein--Sato roots of $f$ with those of $f^{p^m}$.

\begin{lemma} \label{lemma-BSR-g}
	Let $f \in R$ be a nonzerodivisor, and assume that $R$ admits a compatible lift of Frobenius $F\colon R \to R$ with the property that $F(f) = f^p$. 
	\begin{enuroman}
	\item\label{l-BSR-g-1} Let $e \geq 0$ be an integer, and let $n \in \Z$ be a $\nu$-invariant of level $e$ for $f$ with respect to $F$. Then for some $0 \leq i < p$ we have that $p n + i$ is a $\nu$-invariant of level $e + 1$ for $f$ with respect to $F$.
	\item\label{l-BSR-g-2}  If $\alpha \in \Z_p$ is a Bernstein--Sato root of $f$, then for some $0 \leq i < p$ we have that $p \alpha + i$ is a Bernstein--Sato root of $f$.
	\end{enuroman}
\end{lemma}
\begin{proof}
	We start with~\eqref{l-BSR-g-1}. By assumption we know that $\cC^{(F, e)}_R \cdot f^n \neq \cC^{(F,e)}_R \cdot f^{n+1}$. By applying Corollary~\ref{cor-frob-cartier-inverse} to $I = (f^n)$, and using that $F^*(f^n) = (F(f^n)) = f^{pn}$, we conclude that $\cC^{(F, e)}_R \cdot f^n = \cC^{(F, e+1)}_R \cdot f^{pn}$. Similarly, we obtain that $\cC^{(F, e)}_R \cdot f^{n+1} = \cC^{(F, e+1)}_R \cdot f^{p(n+1)}$, and we conclude that $\cC^{(F, e+1)}_R \cdot f^{pn} \neq \cC^{(F, e+1)}_R \cdot f^{p(n+1)}$. In other words, in the chain of ideals
	$$\cC^{(F, e+1)}_R \cdot f^{pn} \spq \cC^{(F, e)}_R \cdot f^{pn + 1} \spq \cdots \spq \cC^{(F, e)}_R \cdot f^{pn + p},$$
	there must be at least one proper inclusion, which gives the result.

	Let us now tackle~\eqref{l-BSR-g-2}. By Theorem~\ref{thm-BSR-equivalences} there is an integer sequence $(\nu_e)_{e = 0}^\infty \sq \Z$ with $\nu_e \in \nu^\bullet_f(F, p^e)$ whose $p$-adic limit is $\alpha$. By part~\eqref{l-BSR-g-1}, for every $e$, we can find some $0 \leq i_e < p$ with the property that $p \nu_e + i_e \in \nu^\bullet_f(F, p^{e+1})$.

	Note that $i_e$ can only take finitely many values, and hence we can find some $0 \leq i < p$ such that $i_e = i$ for infinitely many $e$. By replacing $(\nu_e)$ with a subsequence and applying Proposition~\ref{prop-nu-basic-props}\eqref{pnbp-1}, we get $p \nu_e + i \in \nu^\bullet_f(F, p^e)$. Since the $p$-adic limit of this sequence is $p \alpha + i$, we conclude that $p \alpha + i$ is a Bernstein--Sato root of $f$ by applying Theorem~\ref{thm-BSR-equivalences}.
\end{proof}

We now discuss how $\nu$-invariants and Bernstein--Sato roots behave when we take $p$-power exponents.

\begin{lemma} \label{lemma-p-power-nu-BSR}
	Let $k \geq 1$ be an integer. 
	\begin{enuroman}
	\item\label{lppnBSR-1} If $n \in \nu^\bullet_f(F, p^e)$, then $\lfloor n / k \rfloor \in \nu^\bullet_{f^k}(F, p^e)$.
	\item\label{lppnBSR-2} Let $\alpha \in \Z_p$ be a $p$-adic integer with $p$-adic expansion $\alpha = \alpha_0 + p \alpha_1 + p^2 \alpha_2 + \cdots$, and consider its left truncation $\alpha_{\geq k} \ceq \alpha_k + p \alpha_{k+1} + p^2 \alpha_{k + 2} + \cdots$. If $\alpha$ is a Bernstein--Sato root of $f$, then $\alpha_{\geq k}$ is a Bernstein--Sato root of $f^{p^k}$. 
	\end{enuroman}
\end{lemma}
\begin{proof}
	For~\eqref{lppnBSR-1} consider the chain of ideals given by
	$$\cD^{(F, e)}_R \cdot f^{k \lfloor n / k \rfloor} \spq
	\cD^{(F, e)}_R \cdot f^n \spq
	\cD^{(F, e)}_R \cdot f^{n+1} \spq 
	\cD^{(F, e)}_R \cdot f^{k(\lfloor n / k \rfloor + 1)},$$
	and observe that if the inclusion in the middle is strict, then the outer two ideals cannot be equal.

	Let us now move on to~\eqref{lppnBSR-2}. If $\alpha \in \Z_p$ is a Bernstein--Sato root of $f$, then by Corollary~\ref{cor-truncation-sequence} we have that $\alpha_{<e + m} \in \nu^\bullet_f(F, p^e)$. By part~\eqref{lppnBSR-1} we conclude that $\lfloor \alpha_{< e+ m} / p^k \rfloor \in \nu^\bullet_{f^{p^k}}(F, p^e)$. To complete the proof, we observe that $\lfloor \alpha_{< e + m} / p^k \rfloor$ gives the left truncation of $\alpha_{\geq k}$; that is,
	$$\left\lfloor \alpha_{< e + m} / p^k \right\rfloor = \left(\alpha_{\geq k}\right)_{< e + m -k}$$
	for all $e \geq m - k + 1$. In particular, the $p$-adic limit of $\lfloor \alpha_{< e + m} / p^k \rfloor$ is $\alpha_{\geq k}$, and the result follows from Theorem~\ref{thm-BSR-equivalences}.
\end{proof}

\begin{theorem} \label{thm-BSR-rat}
	Let $(V, \fm, \KK, F)$ be as in Setup~{\rm\ref{setup-V}}, let $R \ceq V[x_1, \dots , x_n]$ be a polynomial ring over $V$, and let $f \in R$ be a nonzerodivisor. All the Bernstein--Sato roots of $f$ are rational.
\end{theorem}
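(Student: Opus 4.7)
The plan follows the roadmap sketched just before the statement: first prove rationality under the extra hypothesis that some lift of Frobenius satisfies $F(f) = f^p$, then reduce the general case to this special case by passing to $f^{p^m}$.

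\textbf{Step 1: the case $F(f) = f^p$.} Given a Bernstein-Sato root $\alpha \in \Z_p$, Lemma \ref{lemma-BSR-g}(ii) produces some $i_0 \in \{0, 1, \ldots, p-1\}$ for which $p\alpha + i_0$ is again a Bernstein-Sato root. I would iterate this to build a sequence $\beta_0 \ceq \alpha$, $\beta_{j+1} \ceq p\beta_j + i_j$ of Bernstein-Sato roots, each of which is of the form $\beta_j = p^j \alpha + d_j$ with $d_j \in \Z$. Since there are only finitely many Bernstein-Sato roots by Theorem \ref{thm-BSR-fin}, the pigeonhole principle forces $\beta_j = \beta_k$ for some $j < k$, and solving this equation gives
\[
	\alpha = \frac{d_j - d_k}{p^k - p^j} \in \Q.
\]
Combined with $\alpha \in \Z_p$, this yields $\alpha \in \Q \cap \Z_p = \Z_{(p)}$.

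\textbf{Step 2: reduction to $f^{p^m}$.} Fix the lift $F: R \to R$ with $F(x_i) = x_i^p$ and set $g \ceq f^{p^m}$. Lemma \ref{lemma-Frob-p-power} gives $F(g) = f^{p^{m+1}} = g^p$, so Step 1 applies to $g$ and all Bernstein-Sato roots of $f^{p^m}$ lie in $\Z_{(p)}$. Now if $\alpha = \sum_{i \geq 0} p^i \alpha_i$ is a Bernstein-Sato root of $f$, then Lemma \ref{lemma-p-power-nu-BSR}(ii), applied with exponent $k = p^m$, tells us that the left truncation $\alpha_{\geq p^m}$ is a Bernstein-Sato root of $f^{p^m}$, hence rational. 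Since $\alpha$ differs from $p^{p^m} \alpha_{\geq p^m}$ by a nonnegative integer, $\alpha \in \Z_{(p)}$ as well.

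\textbf{Main obstacle.} The conceptual heart of the argument is Step 1, which is the positive-characteristic pigeonhole trick of \cite{JNBQG} adapted to the modulo-$p^m$ setting; its essential inputs are the self-similarity Lemma \ref{lemma-BSR-g}(ii) (which in turn relies on the compatibility between Frobenius descent and the correspondence $I \mapsto F^{e*}(I)$ via Corollary \ref{cor-frob-cartier-inverse}) together with the finiteness statement of Theorem \ref{thm-BSR-fin}. Once these two ingredients are in hand, Steps 2 and 3 are essentially bookkeeping using standard properties of $p$-adic expansions, coupled with the clean identity $F(f^{p^m}) = (f^{p^m})^p$ provided by Lemma \ref{lemma-Frob-p-power}.
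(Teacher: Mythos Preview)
Your argument is essentially the paper's own proof: the pigeonhole step under the hypothesis $F(f)=f^p$ (your Step~1) matches the paper's use of Lemma~\ref{lemma-BSR-g}(ii) together with finiteness in $\Z_p/\Z$, and the reduction via $g=f^{p^m}$ and Lemma~\ref{lemma-Frob-p-power} is exactly how the paper passes to the special case. One small slip: in Step~2 you invoke Lemma~\ref{lemma-p-power-nu-BSR}(ii) with parameter $k=p^m$ to get that $\alpha_{\geq p^m}$ is a Bernstein--Sato root of $f^{p^m}$, but the lemma (as its proof shows, despite a typo in its statement) actually gives that $\alpha_{\geq k}$ is a root of $f^{p^k}$, so the correct application is $k=m$, yielding $\alpha_{\geq m}\in\BSR(f^{p^m})$; this is what the paper does, and since $\alpha$ is rational iff any left truncation is, your conclusion is unaffected.
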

\begin{proof}
	Fix an $m \geq 0$ such that $\fm^{m+1} = 0$ in $V$. Recall that a $p$-adic integer is rational precisely when its $p$-adic expansion is eventually repeating; using the terminology of Lemma~\ref{lemma-p-power-nu-BSR}, this entails that $\alpha \in \Z_p$ is rational if and only if $\alpha_{\geq m}$ is rational. By Lemma~\ref{lemma-p-power-nu-BSR} it suffices to show that the Bernstein--Sato roots of $f^{p^m}$ are rational. Since $F(f^{p^m}) = f^{p^{m+1}}$ (see Lemma~\ref{lemma-Frob-p-power}), we may replace $f$ with $f^{p^m}$ and therefore assume that $F(f) = f^p$. 

	Let $\BSR(f) \sq \Z_p$ denote the set of Bernstein--Sato roots of $f$, and let $[\BSR(f)]$ denote its image under the quotient map $\Z_p \to \Z_p / \Z$. Given a $p$-adic integer $\alpha \in \Z_p$, we denote by $[\alpha]$ its image in $\Z_p / \Z$.
	
	By Lemma~\ref{lemma-BSR-g}\eqref{l-BSR-g-2} the set $[ \BSR(f) ]$ is stable under multiplication by $p$, and it is finite by Theorem~\ref{thm-BSR-fin}. We conclude that, given an $\alpha \in \BSR(f)$, there exist integers $a \geq 0$ and $b \geq 1$ such that $[p^a \alpha] = [p^{a+b} \alpha]$, and therefore $p^a \alpha = p^{a+b} \alpha + c$, where $c \in \Z$ is an integer. We conclude that $\alpha = c / p^a(1 - p^b)$, and therefore $\alpha$ is rational (since $\alpha \in \Z_p$, we can also conclude that $p^a$ must divide $c$).
\end{proof}

\subsection{Negative Bernstein--Sato roots}

Let $(V, \fm, \KK, F)$ be as in Setup~\ref{setup-V}, let $R \ceq V[x_1, \dots , x_n]$ be a polynomial ring over $V$, and let $f \in R$ be a nonzerodivisor. From Theorems~\ref{thm-BSR-fin} and~\ref{thm-BSR-rat} we know that, just like in characteristic zero, there are a finite number of Bernstein--Sato roots of $f$, all of which are rational. However, in characteristic zero, a result of Kashiwara also tells us that roots of the Bernstein--Sato polynomial are negative; see \cite{Kas76}. The analogous result is also known to hold in our setting when $V$ is a perfect field of characteristic $p$, by a result of the first author (see \cite{Bitoun2018}, restated in the introduction as Theorem~\ref{thm-bitoun-intro}). One might therefore expect that the same should hold true in our setting, but, as shown in Example~\ref{ex-pos-BSR}, this need not always be the case. Nonetheless, we can show that this holds in certain situations. 

\begin{proposition} \label{prop-neg-good-Frob}
	Let $(V, \fm, \KK, F)$ be as in Setup~{\rm\ref{setup-V}}, let $R \ceq V[x_1, \dots , x_n]$ be a polynomial ring over $V$, and let $f \in R$ be a nonzerodivisor. 
	\begin{enuroman}
	\item\label{pngF-1} All Bernstein--Sato roots of $f$ which are integers are negative.
	\item\label{pngF-2} Assume there exists a lift of Frobenius $F\colon R \to R$ such that $F(f) = f^p$. Then all the Bernstein--Sato roots of $f$ are negative.
	\end{enuroman}
\end{proposition}
\begin{proof}
	For part~\eqref{pngF-1}, we use the characterization of Bernstein--Sato roots given in Theorem~\ref{thm-BSR-equivalences}\eqref{t-BSR-e-b}. We claim that $\cD_R \cdot f^n = R$ for every integer $n \geq 0$. When $V = \KK$ is a perfect field of positive characteristic, this is true because $R$ is strongly $F$-regular: there exists some integer $e \geq 0$ such that $\cC^{(F, e)}_R \cdot f^n = R$, and therefore $\cD^{(F, e)}_R \cdot f^n = F^{e*}(R) = R$ (see \cite{Smi95} for more details). In our generality this tells us that $\cD_R \cdot f^n + \fm R = R$ (see the discussion below), and the statement follows because $\fm R$ is a nilpotent ideal. 

	Recall that, given a nonnegative integer $n \geq 0$, there is a $\cD_R$-module isomorphism $R[f^{-1}] \bs{f^n} \cong R[f^{-1}]$ given by exchanging the symbol $\bs{f^n}$ with the element $f^n \in R$. By the claim we get an isomorphism $\cD_R \cdot \bs{f^n} \cong \cD_R \cdot f^n = R$ and $\cD_R \cdot f \bs{f^n} \cong \cD_R \cdot f^{n+1} = R$, and therefore we have $\cD_R \cdot \bs{f^n} = \cD_R \cdot f \bs{f^n}$, thus proving~\eqref{pngF-1}. 

	For part~\eqref{pngF-2}, if $\alpha \in \Z_{(p), >0}$ were a positive Bernstein--Sato root of $f$, then we would have $p \alpha + i > \alpha$ for all $i = 0, 1, \dots , p-1$. By iterating Lemma~\ref{lemma-BSR-g}\eqref{l-BSR-g-2}, we would obtain an infinite number of Bernstein--Sato roots of~$f$, which would contradict Theorem~\ref{thm-BSR-fin}. 
\end{proof}

In this subsection our goal is to show that the Bernstein--Sato roots that are negative agree with the Bernstein--Sato roots of the mod-$\fm$ reduction of $f$ (these, in turn, are characterized by the $F$-jumping numbers by work of the first author; see Theorem~\ref{thm-bitoun-intro}).

To explain this precisely, we set up some notation. Throughout this subsection, given a $V$-module $M$, we will use $M_0$ to denote 
$$M_0 \ceq M / \fm M.$$
(Beware of the inconsistency with the stalk notation from Section~\ref{subscn-alg-cts-function}: even when $M$ is a $C(\Z_p, V)$-module, this is not the stalk at $0 \in \Z_p$.) Similarly, given an element $u \in M$, we denote by $u_0 \in M_0$ the class of $u$ in the quotient. With this notation,  $R_0 = \KK[x_1, \dots , x_n]$ is a polynomial ring over $\KK$, and, since $f \in R$ was assumed to be a nonzerodivisor, we know that $f_0 \in R_0$ is nonzero.
 
Choose a lift of Frobenius $F\colon R \to R$; by an abuse of notation, we will also use $F\colon R_0 \to R_0$ to denote the Frobenius on $R_0$. Recall that we have a description of $\cD_R$ as
$$\cD_R = \bigcup_{e = 0}^\infty \Hom_R( F^e_* R, F^e_* R).$$
Since we have $(F^e_* R)_0 = F^e_* (R_0)$ (see proof of Lemma~\ref{lemma-FeR-fgp}), we conclude that
$$\left(\cD_R\right)_0 = \cD_{R_0}.$$
In particular, if $M$ is a $\cD_R$-module, then $M_0$ is naturally a $\cD_{R_0}$-module.

Given a $p$-adic integer $\alpha \in \Z_p$, the natural $\cD_{R_0}$-module isomorphism $R[f^{-1}]_0 \cong R_0[f_0^{-1}]$ induces a $\cD_{R_0}$-module isomorphism
$$\left(R\left[f^{-1}\right] \bs{f^\alpha}\right)_0 \cong R_0\left[f_0^{-1}\right] \bs{f_0^\alpha}.$$

We will use the following result of Blickle, \Mustata, and Smith; for simplicity, we state it only for polynomial rings.

\begin{theorem}[\textit{cf.} {\cite[Theorem~2.11]{BMSm-hyp}}] \label{thm-BMSm-hyp}
	Let $R_0 \ceq \KK[x_1, \dots , x_n]$ be a polynomial ring over a perfect field~$\KK$, and let $f_0 \in R_0$ be a nonzero element. If $\alpha \in \Z_{(p), <0}$ is a negative rational number whose denominator is not divisible by $p$, we have
	$$\cD_{R_0} \cdot \bs{f_0^\alpha} = R_0\left[f_0^{-1}\right] \bs{f_0^\alpha}.$$
\end{theorem}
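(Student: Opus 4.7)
The plan is to show that $\bs{f_0^\alpha}$ generates the whole $\cD_{R_0}$-module $R_0[f_0^{-1}] \bs{f_0^\alpha}$ by producing, for every integer $N \geq 0$, a differential operator sending $\bs{f_0^\alpha}$ to $f_0^{-N} \bs{f_0^\alpha}$. This suffices because $R_0[f_0^{-1}] = \bigcup_{N \geq 0} f_0^{-N} R_0$ and $R_0 \subseteq \cD_{R_0}$ acts by multiplication. Using the conjugation description of the $\cD_{R_0}$-action from Subsection \ref{subscn-Rf-alpha}, finding $P \in \cD^{(F,e)}_{R_0}$ with $P \cdot \bs{f_0^\alpha} = f_0^{-N} \bs{f_0^\alpha}$ is equivalent to finding $P$ with $P(f_0^a) = f_0^{a - N}$ inside $R_0[f_0^{-1}]$, where $a \in \Z$ is any integer with $a \equiv \alpha \pmod{p^e \Z_p}$; such $a$ exists because $\alpha \in \Z_{(p)}$.

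Next, I would apply Frobenius descent (Corollary \ref{cor-frob-ideal-corresp}): the $\cD^{(F,e)}_{R_0}$-submodule $\cD^{(F,e)}_{R_0} \cdot f_0^a$ of $R_0[f_0^{-1}]$ corresponds under $F^{e*}$ to the Cartier submodule $\cC^{(F,e)}_{R_0}((f_0^a))$, so the existence of the required $P$ becomes the inclusion $f_0^{a - N} \in F^{e*}\bigl(\cC^{(F,e)}_{R_0}((f_0^a))\bigr)$. The hypothesis $\alpha < 0$ enters decisively here: within its residue class modulo $p^e$ we may choose $a = -M$ with $M$ arbitrarily large and positive. Writing $M = p^e k - r$ with $r \in [0, p^e)$, the factorization $f_0^{-M} = F^e(f_0^{-k}) \cdot f_0^r$ together with the Frobenius-linearity of Cartier operators yields
$$\cC^{(F,e)}_{R_0}((f_0^{-M})) = f_0^{-k} \cdot \cC^{(F,e)}_{R_0}((f_0^r)),$$
and applying $F^{e*}$ reduces the target inclusion to the statement $f_0^{r - N} \in \cC^{(F,e)}_{R_0}((f_0^r))^{[p^e]}$, which is a Frobenius-power containment for the test ideal $\tau(f_0^{r/p^e}) = \cC^{(F,e)}_{R_0}((f_0^r))$.

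The main obstacle is securing this test-ideal containment. Since $R_0$ is smooth over a perfect field it is strongly $F$-regular, and in particular the $F$-pure threshold $\fpt(f_0) > 0$, so $\tau(f_0^{r/p^e}) = R_0$ whenever $r/p^e < \fpt(f_0)$, in which case $f_0^{r - N}$ lies in the Frobenius power as soon as $r \geq N$. The delicate point is that the residue $r$ is forced on us by the $p$-adic expansion of $\alpha$ rather than chosen freely; by the rationality of $\alpha$ with denominator coprime to $p$, this expansion is eventually periodic, which lets us select $e$ so that $r/p^e$ stays bounded away from $1$ while $r$ itself tends to infinity with $e$. In the residual case where the residue forces $r/p^e$ above $\fpt(f_0)$, one iterates by decomposing $f_0^{-M}$ through Frobenius powers of lower-exponent factors and composing several such operators; this inductive step, which is the heart of the argument, relies on the structural properties of test ideals established in \cite{BMSm2008} together with the strong $F$-regularity of $R_0$.
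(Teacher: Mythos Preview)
This theorem is quoted from \cite{BMSm-hyp}; the paper does not supply its own proof and explicitly says ``We refer the reader to \cite{BMSm-hyp} for the proof of the above result.'' So there is nothing here to compare your argument against line by line.

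That said, your outline is not yet a proof. Your reduction is correct: via the conjugation description and Frobenius descent, producing $P\in\cD_{R_0}^{(F,e)}$ with $P\cdot\bs{f_0^\alpha}=f_0^{-N}\bs{f_0^\alpha}$ amounts to the containment $f_0^{\,r-N}\in\bigl(\cC_{R_0}^{(F,e)}(f_0^r)\bigr)^{[p^e]}$ with $r\equiv\alpha\pmod{p^e}$, $0\le r<p^e$. But you then explicitly defer ``the heart of the argument'' to an unspecified inductive step invoking unspecified ``structural properties of test ideals.'' The difficulty you flag is real: the residue $r$ is dictated by the $p$-adic digits of $\alpha$, and your claim that one can keep $r/p^e$ below $\fpt(f_0)$ while pushing $r\ge N$ is false as stated. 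For instance when $\alpha=-1$ one has $r=p^e-1$ for every $e$, so $r/p^e\to 1$, which will typically exceed $\fpt(f_0)$; and more generally the limit set of $r_e/p^e$ depends on $\alpha$ and need not sit below the threshold. So the ``easy case'' you isolate does not cover the theorem, and the remaining case is precisely where the content lies.

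If you want to complete the argument along these lines, the actual mechanism in \cite{BMSm-hyp} is not an induction on test-ideal exponents but rather a direct use of the finite generation of $F^e_*R_0$ together with the periodicity of the base-$p$ digits of $\alpha$ (rationality with denominator prime to $p$) to exhibit, for each $N$, a suitable operator built from the dual basis of $F^e_*R_0$. The negativity of $\alpha$ enters because it guarantees that the truncations $r_e$ grow without bound (whereas for nonnegative integer $\alpha$ they eventually stabilize), so one really can arrange $r_e\ge N$; the issue is controlling $\cC_{R_0}^{(F,e)}(f_0^{r_e})$ when $r_e/p^e$ is not small, and that requires more than strong $F$-regularity alone.
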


Note that the module $R_0[f_0^{-1}] \bs{f_0^\alpha}$ is denoted by $M_{-\alpha}$ in \cite{BMSm-hyp}, and that the element we denote as $\bs{f_0^\alpha} \in R_0[f_0^{-1}] \bs{f^\alpha}$ is  denoted there by $e_{-\alpha} \in M_\alpha$. We refer the reader to \cite{BMSm-hyp} for the proof of the above result.

\begin{lemma} \label{lemma-phi0-surjective}
	Let $\phi\colon M \to N$ be a homomorphism of\, $V$-modules. Then $\phi$ is surjective if and only if the induced map $\phi_0\colon M_0 \to N_0$ is surjective.
\end{lemma}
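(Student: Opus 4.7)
The plan is a standard nilpotent-Nakayama argument, since by Setup \ref{setup-V} the maximal ideal $\fm \sq V$ is nilpotent; fix $m \geq 0$ with $\fm^{m+1} = 0$.

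The forward direction is immediate: the functor $(-)_0 = (-) \otimes_V V/\fm$ is right exact, so if $\phi$ is surjective then so is $\phi_0$.

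For the reverse direction, suppose $\phi_0: M_0 \to N_0$ is surjective. Unwinding definitions, this says $\phi(M) + \fm N = N$. Multiplying this equality by $\fm^k$ we get $\fm^k \phi(M) + \fm^{k+1} N = \fm^k N$, and substituting iteratively into $N = \phi(M) + \fm N$ yields
\[
	N = \phi(M) + \fm^k N
\]
for every $k \geq 1$. Taking $k = m+1$ and using $\fm^{m+1} N = 0$, we conclude $N = \phi(M)$, so $\phi$ is surjective. No step here is a serious obstacle; the only point worth emphasizing is that the conclusion genuinely uses the artinian (in fact $\fm$-nilpotent) hypothesis on $V$ from Setup \ref{setup-V}, without which the statement would fail (one would only recover surjectivity after $\fm$-adic completion).
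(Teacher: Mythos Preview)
Your proof is correct and takes essentially the same approach as the paper. The only cosmetic difference is that the paper phrases the reverse direction via the cokernel $Q$ (showing $Q_0 = 0$ forces $Q = \fm Q = \cdots = \fm^{m+1} Q = 0$), whereas you iterate directly on $N = \phi(M) + \fm N$; these are the same nilpotent-Nakayama argument.
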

\begin{proof}
	The ``only if'' direction follows from the right exactness of the functor $(-)_0 = (-) \otimes_V \KK$. For the ``if'' direction, let $Q$ be the cokernel of $\phi$, so that $M \to N \to Q \to 0$ is exact. By right exactness we get that $M_0 \to N_0 \to Q_0 \to 0$ is exact, and hence $Q_0 = 0$. We conclude that $Q = \fm Q$, and hence $Q = \fm Q = \fm^2 Q = \cdots  = \fm^{m+1} Q = 0$.
\end{proof}

Using Lemma~\ref{lemma-phi0-surjective}, we show that Theorem~\ref{thm-BMSm-hyp} also applies in our setting.

\begin{corollary} \label{cor-BMSm-hyp-1}
	Let $(V, \fm, \KK, F)$ be as in Setup~{\rm\ref{setup-V}}, let $R \ceq V[x_1, \dots , x_n]$ be a polynomial ring over $V$, let $f \in R$ be a nonzerodivisor, and let $\alpha \in \Z_{(p), <0}$ be a negative rational number whose denominator is not divisible by $p$. We then have
	$$\cD_R \cdot \bs{f^\alpha} = R\left[f^{-1}\right] \bs{f^\alpha}.$$
\end{corollary}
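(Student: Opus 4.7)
The plan is to leverage Theorem \ref{thm-BMSm-hyp} by reducing the question modulo $\fm$. Consider the $\cD_R$-linear evaluation map
\[
\phi : \cD_R \longrightarrow R[f^{-1}] \bs{f^\alpha}, \qquad P \longmapsto P \cdot \bs{f^\alpha},
\]
whose image is precisely $\cD_R \cdot \bs{f^\alpha}$; the statement is therefore equivalent to the surjectivity of $\phi$. Since $\phi$ is a homomorphism of $V$-modules, Lemma \ref{lemma-phi0-surjective} reduces the problem to showing that the mod-$\fm$ reduction $\phi_0$ is surjective.

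Next I would identify $\phi_0$ with the analogous map in characteristic $p$. Under the natural isomorphisms $(\cD_R)_0 = \cD_{R_0}$ and $(R[f^{-1}]\bs{f^\alpha})_0 \cong R_0[f_0^{-1}]\bs{f_0^\alpha}$ recalled just above the statement, $\phi_0$ corresponds to the map $\psi : \cD_{R_0} \to R_0[f_0^{-1}]\bs{f_0^\alpha}$ given by $\bar P \mapsto \bar P \cdot \bs{f_0^\alpha}$. This follows because the $\cD_R$-action on $R[f^{-1}]\bs{f^\alpha}$ is described by $P \cdot (g\bs{f^\alpha}) = f^{-a} P(f^a g) \bs{f^\alpha}$ for an integer $a$ sufficiently $p$-adically close to $\alpha$ (Lemma \ref{lemma-conj-welldef}), and the same formula with $f,P$ replaced by $f_0, \bar P$ describes the $\cD_{R_0}$-action on $R_0[f_0^{-1}]\bs{f_0^\alpha}$. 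Note that $f_0 \neq 0$: otherwise $f \in \fm R$, so $f$ would annihilate any nonzero element of $\fm^m R$, contradicting the fact that $f$ is a nonzerodivisor.

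Finally, Theorem \ref{thm-BMSm-hyp} applied to the polynomial ring $R_0 = \KK[x_1, \dots, x_n]$ over the perfect field $\KK$ and the nonzero element $f_0$ shows that $\psi$ is surjective. Hence $\phi_0$ is surjective, and Lemma \ref{lemma-phi0-surjective} gives the surjectivity of $\phi$, as desired.

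The main obstacle is the compatibility check in the middle step. Unwinding that the mod-$\fm$ reduction of $R[f^{-1}]\bs{f^\alpha}$ matches $R_0[f_0^{-1}]\bs{f_0^\alpha}$ as a $\cD_{R_0}$-module ultimately amounts to checking that the ``conjugation by $f^\alpha$'' automorphism $\xi_f$ on $C(\Z_p, \cD_{R[f^{-1}]})$ reduces to its characteristic-$p$ analogue $\xi_{f_0}$. This is purely formal, following from the fact that reduction mod $\fm$ commutes with both the stalk functor at $\alpha$ and with the choice of an integer $a \in \Z$ approximating $\alpha$, but keeping track of the identifications requires some care.
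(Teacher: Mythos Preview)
Your proposal is correct and follows essentially the same approach as the paper: reduce the surjectivity question modulo $\fm$ via Lemma \ref{lemma-phi0-surjective}, then invoke Theorem \ref{thm-BMSm-hyp} for $R_0$ and $f_0$. The paper phrases it in terms of the inclusion $\cD_R \cdot \bs{f^\alpha} \hookrightarrow R[f^{-1}]\bs{f^\alpha}$ rather than the map from $\cD_R$, but this is an immaterial difference, and your added remarks on compatibility and on $f_0 \neq 0$ only make explicit what the paper leaves implicit.
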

\begin{proof}
	The goal is to show that the inclusion $\cD_R \cdot \bs{f^\alpha} \to R[f^{-1}] \bs{f^\alpha}$ is surjective; by Lemma~\ref{lemma-phi0-surjective} this can be checked after applying $(-)_0$. The image of the induced morphism $(\cD_R \cdot \bs{f^\alpha})_0 \to R_0 [f^{-1}_0] \bs{f^\alpha_0}$ is $\cD_{R_0} \cdot \bs{f^\alpha_0}$, and the result then follows from Theorem~\ref{thm-BMSm-hyp}. 
\end{proof}
\begin{remark} \label{rmk-mod-m-generator}
	Note that the proof of Corollary~\ref{cor-BMSm-hyp-1} works more generally to show the following. If $g \in R[f^{-1}]$ is an element whose mod-$\fm$ reduction $g_0 \in R_0[f_0^{-1}]$ satisfies $\cD_{R_0} \cdot g_0 \bs{f_0^\alpha} = R_0[f_0^{-1}] \bs{f_0^\alpha}$, then we have $\cD_R \cdot g \bs{f^\alpha} = R[f^{-1}] \bs{f^\alpha}$.
\end{remark}
\begin{theorem} \label{thm-BSR-neg}
	Let $(V, \fm, \KK, F)$ be as in Setup~{\rm\ref{setup-V}}, let $R \ceq V[x_1, \dots , x_n]$ be a polynomial ring over $V$, and let $f \in R$ be a nonzerodivisor. Let $\alpha \in \Z_{(p),<0}$ be a negative rational number whose denominator is not divisible by $p$. Then $\alpha$ is a Bernstein--Sato root of $f$ if and only if it is a Bernstein--Sato root of $f_0$.
\end{theorem}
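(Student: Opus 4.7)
The plan is to reduce both Bernstein-Sato root conditions to a comparison of the stalks $(N_f)_\alpha$ and $(N_{f_0})_\alpha$, and then to show that the latter is the reduction modulo $\fm$ of the former. By Definition \ref{def-BSR}, it suffices to prove that $(N_f)_\alpha = 0$ if and only if $(N_{f_0})_\alpha = 0$.

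The crucial preliminary observation is that for negative $\alpha \in \Z_{(p)}$, Corollary \ref{cor-BMSm-hyp-1} gives $\cD_R \cdot \bs{f^\alpha} = R[f^{-1}] \bs{f^\alpha}$, and Theorem \ref{thm-BMSm-hyp} applied to the perfect field $\KK$ gives $\cD_{R_0} \cdot \bs{f_0^\alpha} = R_0[f_0^{-1}] \bs{f_0^\alpha}$. Combining these with the equivalence (a)$\Leftrightarrow$(b) of Theorem \ref{thm-BSR-equivalences}, we obtain
$$(N_f)_\alpha \;\cong\; \frac{R[f^{-1}]\bs{f^\alpha}}{\cD_R \cdot f\bs{f^\alpha}}, \qquad (N_{f_0})_\alpha \;\cong\; \frac{R_0[f_0^{-1}]\bs{f_0^\alpha}}{\cD_{R_0} \cdot f_0\bs{f_0^\alpha}}.$$
The natural reduction map $R[f^{-1}]\bs{f^\alpha} \to R_0[f_0^{-1}]\bs{f_0^\alpha}$ has kernel $\fm R[f^{-1}]\bs{f^\alpha}$, and sends $\cD_R \cdot f \bs{f^\alpha}$ surjectively onto $\cD_{R_0} \cdot f_0 \bs{f_0^\alpha}$, using the already-established surjection $\cD_R \twoheadrightarrow \cD_{R_0}$ coming from $(\cD_R)_0 = \cD_{R_0}$. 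Passing to quotients thus yields a natural surjection $(N_f)_\alpha \twoheadrightarrow (N_{f_0})_\alpha$ with kernel $\fm (N_f)_\alpha$, i.e., a natural isomorphism $(N_f)_\alpha / \fm (N_f)_\alpha \cong (N_{f_0})_\alpha$.

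With this in hand, the ``if'' direction is immediate: $(N_{f_0})_\alpha \neq 0$ forces $(N_f)_\alpha \neq 0$ since the latter surjects onto the former. Conversely, suppose $(N_{f_0})_\alpha = 0$, so that $\fm (N_f)_\alpha = (N_f)_\alpha$. Since $\fm^{m+1} = 0$ in $V$, iterating gives
$$(N_f)_\alpha = \fm (N_f)_\alpha = \fm^2 (N_f)_\alpha = \cdots = \fm^{m+1}(N_f)_\alpha = 0,$$
which completes the equivalence.

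There is no substantial obstacle in this argument; the only moderately delicate point is the invocation of Corollary \ref{cor-BMSm-hyp-1}, which genuinely requires the strict negativity of $\alpha$. Without this simplification the numerator $\cD_R \cdot \bs{f^\alpha}$ need not reduce mod $\fm$ cleanly to $\cD_{R_0} \cdot \bs{f_0^\alpha}$, and the described identification of $(N_f)_\alpha / \fm (N_f)_\alpha$ with $(N_{f_0})_\alpha$ would fail, consistent with the fact that positive roots can (and do) appear for $f$ without corresponding roots for $f_0$.
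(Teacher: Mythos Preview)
Your proof is correct and follows essentially the same route as the paper's: both arguments hinge on Corollary~\ref{cor-BMSm-hyp-1} (and Theorem~\ref{thm-BMSm-hyp}) to replace $\cD_R \cdot \bs{f^\alpha}$ by $R[f^{-1}]\bs{f^\alpha}$, and then use the nilpotence of $\fm$ to pass between $(N_f)_\alpha$ and its mod-$\fm$ reduction. The only cosmetic difference is packaging: the paper checks surjectivity of the inclusion $\cD_R \cdot f\bs{f^\alpha} \hookrightarrow \cD_R \cdot \bs{f^\alpha}$ via Lemma~\ref{lemma-phi0-surjective} and a small commutative square, whereas you directly establish the isomorphism $(N_f)_\alpha / \fm (N_f)_\alpha \cong (N_{f_0})_\alpha$ and run the Nakayama-style iteration by hand---but this is the same content. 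One minor quibble: the isomorphism $(N_f)_\alpha \cong \cD_R \cdot \bs{f^\alpha} / \cD_R \cdot f\bs{f^\alpha}$ is established in the \emph{proof} of Theorem~\ref{thm-BSR-equivalences}, not in the statement of the (a)$\Leftrightarrow$(b) equivalence itself, so your citation could be sharpened.
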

\begin{proof}
	By Theorem~\ref{thm-BSR-equivalences} the number $\alpha$ is a Bernstein--Sato root of $f$ if and only if the natural inclusion $i\colon \cD_R \cdot f \bs{f^\alpha} \to \cD_R \cdot \bs{f^\alpha}$ is an equality (\textit{i.e.}~surjective). By Lemma~\ref{lemma-phi0-surjective} this surjectivity is equivalent to the surjectivity of the induced map $i_0\colon (\cD_R \cdot f \bs{f^\alpha})_0 \to (\cD_R \cdot \bs{f^\alpha})_0$. Similarly, $\alpha$ is a Bernstein--Sato root of $f_0$ if and only if the natural inclusion $j\colon \cD_{R_0} \cdot f_0 \bs{f_0^\alpha} \to \cD_{R_0} \cdot \bs{f_0^\alpha}$ is an equality (\textit{i.e.}~surjective). 

	Note that $\cD_{R_0} \cdot f_0 \bs{f_0^\alpha}$ is the image of $(\cD_R \cdot f \bs{f^\alpha})_0 \to R_0[f_0^{-1}] \bs{f_0^\alpha}$, and similarly $\cD_R \cdot \bs{f_0^\alpha}$ is the image of $(\cD_R \cdot \bs{f^\alpha})_0 \to R_0[f_0^{-1}] \bs{f_0^\alpha}$. This means that the relevant maps fit into a diagram as follows:
	$$\begin{tikzcd}
		\left(\cD_R \cdot f \bs{f^\alpha}\right)_0 \arrow[r, "i_0"] \arrow[d, twoheadrightarrow] & \left(\cD_R \cdot \bs{f^\alpha}\right)_0 \arrow[d, twoheadrightarrow] \\
		\cD_{R_0} \cdot f_0 \bs{f_0^\alpha} \arrow[r, "j",swap] & \cD_{R_0} \cdot \bs{f_0^\alpha}\rlap{.}
	\end{tikzcd}$$
	
	At this stage one can already see that $j$ is surjective whenever $i_0$ is surjective; this proves the ``if'' direction of the statement. To prove the ``only if'' direction, we want to show that $i_0$ is surjective whenever $j$ is surjective, and for this it suffices to show that the right vertical arrow in the above diagram is an isomorphism. But this follows by Theorem~\ref{thm-BMSm-hyp} and Corollary~\ref{cor-BMSm-hyp-1}; indeed, we have 
	\begin{align*}
		\left(\cD_R \cdot \bs{f^\alpha}\right)_0 & = \left(R\left[f^{-1}\right] \bs{f^\alpha}\right)_0 \\
					      & = R_0\left[f_0^{-1}\right] \bs{f_0^\alpha} \\
					      & = \cD_{R_0} \cdot \bs{f_0^\alpha}. \qedhere
	\end{align*}
\end{proof}
\begin{remark}
In the proof of Theorem~\ref{thm-BSR-neg}, the hypothesis on the negativity of $\alpha$ is only used to ensure that the natural map $(\cD_R \cdot \bs{f^\alpha})_0 \to \cD_{R_0} \cdot \bs{f^\alpha_0}$ is an isomorphism. 
\end{remark}

\subsection{Positive Bernstein--Sato roots}

In this subsection we show through an example that there exist positive Bernstein--Sato roots, and we show that such roots can only arise as integer translates of negative roots. We begin with our example.

\begin{example} \label{ex-pos-BSR}
	Suppose $p \neq 2$, let $R \ceq (\Z / p^2)[X,Y]$, and consider $f \ceq X^2 + pY \in R$. We let $F\colon R \to R$ be the lift of Frobenius for which $F(X) = X^p$ and $F(Y) = Y^p$. We claim that for all $e \geq 2$ we have
	$$\nu^\bullet_f(F, p^e) = \{ k p^e - 1 \ | \ k \in \Z \} \cup \left\{ \frac{ k p^e - 1}{2} \ \middle\vert \ k \in \Z \text{ odd } \right\} \cup \left\{ \frac{k p^e + 1}{2} \ \middle\vert \ k \in \Z \text{ odd} \right\},$$
	and therefore
	$$\BSR(f) = \left\{ -1, \frac{-1}{2}, \frac{1}{2} \right\}.$$
	In particular, the polynomial $f$ has a positive Bernstein--Sato root.

	To prove the claim, recall that by the translation property of $\nu$-invariants (Proposition~\ref{prop-nu-basic-props}(ii)), it suffices to compute the set $\nu^\bullet_f(F, p^e) \cap [0, p^{e+1})$. Note that, for all integers $e,n \geq 0$, we have
	\begin{align*}
		F^e_* f^n & = F^e_* \left(X^2 + pY\right)^n = F^e_* \left( X^{2n} + n p X^{2n-2} Y \right) \\
			  & = X^{\lfloor 2n / p^e \rfloor} F^e_* \left(X^{2n - p^e \lfloor 2n / p^e \rfloor}\right) + np X^{\lfloor (2n-2) / p^e \rfloor} F^e_* \left(X^{2n-2 - p^e \lfloor (2n-2) / p^e \rfloor} Y\right),
	\end{align*}
	and from Proposition~\ref{prop-cartier-comput} we conclude that
	$$\cC^{(F, e)}_R \cdot f^n = \left( X^{\lfloor 2n / p^e \rfloor}, \ n p \  X^{\lfloor (2n-2) / p^e \rfloor} \right).$$

	Therefore, for $n$ to be a $\nu$-invariant, one of the two generators above must change when $n$ is replaced by $n + 1$, and hence one must necessarily have $\lfloor 2n / p^e \rfloor \neq \lfloor 2n+2 / p^e \rfloor$ or $\lfloor (2n-2) / p^e \rfloor \neq \lfloor 2n / p^e \rfloor$. This forces $2n = k p^e + a$ for some $k \in \Z$ and $a \in \{-2, -1, 0, 1\}$. We analyze each of these cases separately:
\begin{itemize}
\item If $2n = kp^e - 2$, then $k$ must be even, say $k = 2k'$, and thus $n = k' p^e - 1$. For $e 
\geq 2$, we get $\cC^{(F, e)}_R \cdot f^n = (X^{k-1})$, while $\cC^{(F, e)}_R \cdot f^{n+1} = (X^k)$. Thus, $n = k' p^e - 1 \in \nu^\bullet_f(F, p^e)$.

\item If $2n = k p^e - 1$, then $k$ must be odd. Both $n$ and $n + 1$ are units in $\Z / p^2$, and we get $\cC^{(F, e)}_R \cdot f^n = (X^{k-1})$ while $\cC^{(F, e)}_R \cdot f^{n+1} = (X^k, p X^{k-1})$. Thus, $n = (k p^e - 1)/2 \in \nu^\bullet_f(F, p^e)$. 

\item If $2n = k p^e$, then we get $\cC^{(F, e)}_R \cdot f^n = (X^k) = \cC^{(F, e)}_R \cdot f^{n+1}$, and hence we do not get $\nu$-invariants of this form.

\item If $2n = k p^e + 1$, then $k$ must be odd, and we get $\cC^{(F, e)}_R \cdot f^n = (X^k, p X^{k-1})$ while $\cC^{(F, e)}_R \cdot f^{n+1} = (X^k)$. Therefore, $n = (kp^e + 1)/ 2 \in \nu^\bullet_f(F, p^e)$.
\end{itemize}
\end{example}

In the above example, note that the positive Bernstein--Sato root $1/2$ is an integer translate of a negative one; namely, $1/2 = -1/2 + 1$. We show that this is a general phenomenon. 

\begin{theorem} \label{thm-Z-translate}
	Let $(V, \fm, \KK, F)$ be as in Setup~{\rm\ref{setup-V}}, let $R \ceq V[x_1, \dots , x_n]$ be a polynomial ring over $V$, and let $f \in R$ be a nonzerodivisor. Every Bernstein--Sato root of $f$ is a $\Z$-translate of a negative one.
\end{theorem}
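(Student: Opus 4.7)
My plan is to argue by contrapositive, reducing the question to characteristic $p$ and then lifting back to $V$ via the Nakayama-type observation of Remark \ref{rmk-mod-m-generator}. By Proposition \ref{prop-neg-good-Frob}(i) every integer Bernstein-Sato root of $f$ is already negative, so I may restrict to $\alpha\in\BSR(f)\setminus\Z$; by Theorem \ref{thm-BSR-rat} we then have $\alpha\in\Z_{(p)}$, and there is a unique $\eta\in(\alpha+\Z)\cap(-1,0)$. Since $\alpha-\eta\in\Z$, the theorem will follow once I show $\eta\in\BSR(f)$, and I will do this by assuming $\eta\notin\BSR(f)$ and deriving the contradiction $\alpha\notin\BSR(f)$.

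The setup in characteristic $p$ is as follows. Since $\eta<0$ lies in $\Z_{(p)}$, Theorem \ref{thm-BSR-neg} turns $\eta\notin\BSR(f)$ into $\eta\notin\BSR(f_0)$; combined with $\BSR(f_0)\sq[-1,0)$ from Theorem \ref{thm-bitoun-intro}, this forces $(\eta+\Z)\cap\BSR(f_0)=\emptyset$, because $\eta$ is the unique element of its $\Z$-coset lying in $[-1,0)$. Inside $R_0[f_0^{-1}]\bs{f_0^\eta}$ I then study the non-increasing chain $I_k\ceq\cD_{R_0}\cdot f_0^k\bs{f_0^\eta}$ for $k\in\Z$. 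The isomorphism of Remark \ref{rmk-Rf-compat}(i) identifies $I_k$ with $\cD_{R_0}\cdot\bs{f_0^{\eta+k}}$ sitting inside $R_0[f_0^{-1}]\bs{f_0^{\eta+k}}$ and, under the same identification, $I_{k+1}$ corresponds to $\cD_{R_0}\cdot f_0\bs{f_0^{\eta+k}}$; so Theorem \ref{thm-BSR-equivalences}(b) (applied to $f_0$) translates the condition $I_{k+1}\subsetneq I_k$ into $\eta+k\in\BSR(f_0)$. By the previous sentence no such strict inclusions occur. For $k\ll 0$ the exponent $\eta+k$ is negative and lies in $\Z_{(p)}$, so Theorem \ref{thm-BMSm-hyp} identifies $\cD_{R_0}\cdot\bs{f_0^{\eta+k}}$ with $R_0[f_0^{-1}]\bs{f_0^{\eta+k}}$, which under the isomorphism means $I_k=R_0[f_0^{-1}]\bs{f_0^\eta}$. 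The combination of the two observations forces $I_k=R_0[f_0^{-1}]\bs{f_0^\eta}$ for every $k\in\Z$.

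To conclude, I lift each of these equalities back to $V$ by invoking Remark \ref{rmk-mod-m-generator} with $g=f^k\in R[f^{-1}]$ and exponent $\eta$, obtaining $\cD_R\cdot f^k\bs{f^\eta}=R[f^{-1}]\bs{f^\eta}$ for every $k\in\Z$. The analogous chain over $V$ is therefore also constant, and the same jump-to-root dictionary (Theorem \ref{thm-BSR-equivalences}) forces $(\eta+\Z)\cap\BSR(f)=\emptyset$, contradicting $\alpha\in(\eta+\Z)\cap\BSR(f)$. The step I expect to require the most care is the very last one: ensuring that Remark \ref{rmk-mod-m-generator} really does apply to $g=f^k$ for every integer $k$, including negative $k$ for which $f^k\in R[f^{-1}]\setminus R$. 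This should follow from the observation that the Nakayama argument behind that remark, via Lemma \ref{lemma-phi0-surjective}, uses nothing about $g$ beyond membership in $R[f^{-1}]$; modulo this verification, the rest of the argument is a clean transfer of the characteristic-$p$ containment $\BSR(f_0)\sq[-1,0)$ through the chain structure.
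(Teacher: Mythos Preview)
Your argument is correct and follows essentially the same route as the paper's proof: both reduce to showing that if the representative of a $\Z$-coset in $[-1,0)$ is not a Bernstein-Sato root of $f$, then no element of that coset is, by passing to $f_0$, using Bitoun's containment $\BSR(f_0)\sq[-1,0)$ to see that the entire coset misses $\BSR(f_0)$, and then invoking Remark~\ref{rmk-mod-m-generator} to lift back to $V$. Your version is slightly more explicit in two places---you separate out the integer case via Proposition~\ref{prop-neg-good-Frob}(i), and you invoke Theorem~\ref{thm-BMSm-hyp} to anchor the chain $I_k$ at $R_0[f_0^{-1}]\bs{f_0^\eta}$---whereas the paper's proof leaves these as one-line inferences; your concern about applying Remark~\ref{rmk-mod-m-generator} to $g=f^k$ with $k<0$ is unfounded, since that remark is stated for arbitrary $g\in R[f^{-1}]$.
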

\begin{proof}
We show that if $\alpha \in \Z_{(p)} \cap [-1, 0)$ is not a Bernstein--Sato root of $f$, then no integer translate can be a Bernstein--Sato root. Let $R_0 \ceq R / \fm R$ and $f_0 \in R_0$ be the mod-$\fm$ reduction of $f$. Since $\alpha$ is not a Bernstein--Sato root of $f$, we know that it is not a Bernstein--Sato root of $f_0$.

Since the Bernstein--Sato roots of $f_0$ are contained in $[-1, 0)$ by \cite{Bitoun2018} (see Theorem~\ref{thm-bitoun-intro}), we know that no integer translate of $\alpha$ is a Bernstein--Sato root of $f_0$. This (combined with Theorem~\ref{thm-BSR-equivalences} and Remark~\ref{rmk-Rf-compat}) implies that $\cD_{R_0} \cdot f^k \bs{f_0^\alpha} = R_0[f_0^{-1}] \bs{f_0^\alpha}$ for all $k \in \Z$. By Remark~\ref{rmk-mod-m-generator} we conclude that $\cD_R \cdot f^k \bs{f^\alpha} = R[f^{-1}] \bs{f^\alpha}$, and therefore no integer translate of $\alpha$ can be a Bernstein--Sato root of $f$.
\end{proof}

\begin{corollary} \label{cor-BSR-Z-lift-indep}
Let $R_0 \ceq R / \fm R$, and let $f_0 \in R_0$ be the mod-$\fm$ reduction of $f$. We then have
$$\BSR(f) + \Z = \BSR(f_0) + \Z.$$
\end{corollary}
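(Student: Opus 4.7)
The plan is to deduce this corollary from the combination of Theorem \ref{thm-Z-translate}, Theorem \ref{thm-BSR-neg}, and Theorem \ref{thm-bitoun-intro} (\cite{Bitoun2018}), which together control the behaviour of $\BSR(f)$ in terms of $\BSR(f_0)$ on the negative part. Since the content of the statement is just the equality of two subsets of $\Z_p$ that are stable under $\Z$-translation, I would prove the two inclusions separately.

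For the inclusion $\BSR(f) + \Z \sq \BSR(f_0) + \Z$, I would start with an arbitrary $\alpha \in \BSR(f)$. By Theorem \ref{thm-Z-translate}, I may write $\alpha = \beta + k$ with $k \in \Z$ and $\beta \in \BSR(f)$ negative. By Theorem \ref{thm-BSR-rat} the root $\beta$ is rational, and since it lives in $\Z_p$, it lies in $\Z_{(p)}$; in particular its denominator is not divisible by $p$. Hence Theorem \ref{thm-BSR-neg} applies and yields $\beta \in \BSR(f_0)$, so $\alpha \in \BSR(f_0) + \Z$.

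For the reverse inclusion $\BSR(f_0) + \Z \sq \BSR(f) + \Z$, I would take any $\gamma \in \BSR(f_0)$. By Theorem \ref{thm-bitoun-intro}, every element of $\BSR(f_0)$ is a negative rational number in $[-1, 0) \cap \Z_{(p)}$. Applying the ``if'' direction of Theorem \ref{thm-BSR-neg} to such $\gamma$ gives $\gamma \in \BSR(f)$, so in particular $\BSR(f_0) \sq \BSR(f)$ and the corresponding inclusion of $\Z$-translates follows.

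There is no real obstacle here: both inclusions are essentially bookkeeping once one knows (a) that positive roots of $f$ are integer shifts of negative ones and (b) that negative rational roots of $f$ and $f_0$ coincide. The only subtlety worth checking is that the hypothesis ``denominator not divisible by $p$'' needed to invoke Theorem \ref{thm-BSR-neg} is automatic, which holds because $\BSR(f) \sq \Z_{(p)}$ by Theorem \ref{thm-BSR-rat} and $\BSR(f_0) \sq [-1,0) \cap \Z_{(p)}$ by Theorem \ref{thm-bitoun-intro}.
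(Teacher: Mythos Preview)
Your argument is correct and matches the paper's approach: the corollary is stated without proof there, but it follows immediately from Theorems \ref{thm-Z-translate}, \ref{thm-BSR-neg}, and \ref{thm-bitoun-intro} exactly as you outline.
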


\subsection{Strength of a Bernstein--Sato root and mod-\texorpdfstring{$\boldsymbol{p}$}{p} reduction}

By considering the notion of multiplicity for a Bernstein--Sato root, we are lead to consider the following quantity (since eventually we observe that this does not provide a good analogue of multiplicity, we give it its own name).

\begin{definition}
	Let $(V, \fm, \KK, F)$ be as in Setup~\ref{setup-V}, let $R \ceq V[x_1, \dots, x_n]$ be a polynomial ring over $V$, and let $f \in R$ be a nonzerodivisor. Given a $p$-adic integer $\alpha \in \Z_p$, the strength of $\alpha$ in $f$ is given by
	\begin{align*}
		\str(\alpha, f) & \ceq \min \left\{ t \geq 0 \ \middle\vert \ (\fm : \alpha)^t \cdot \left(N_f\right)_\alpha = 0 \right\}. 
	\end{align*}
\end{definition}

If we fix an $m \geq 0$ such that $\fm^{m+1} = 0$ in $V$, then $\str(\alpha, f)$ is an integer with $0 \leq \str(\alpha, f) \leq m + 1$, and $\alpha$ is a Bernstein--Sato root of $f$ if and only if $\str(\alpha, f) \geq 1$. We now give a few alternative descriptions of strength.

First recall that we have a natural identification $(N_f)_\alpha \cong N_f / (0 : \alpha) N_f$, that evaluation at $\alpha \in \Z_p$ defines a surjective map $C(\Z_p, V) \to V$, and that, given an ideal $\fa \sq V$, its preimage along this map $C(\Z_p, V) \to V$ is given by $(\fa : \alpha)$. With this in mind, we observe that $(\fm : \alpha)^t \cdot (N_f)_\alpha = \fm^t \cdot (N_f)_\alpha$ for every $t \geq 0$, and therefore
$$\str(\alpha, f) = \min \left\{ t \geq 0 \ \middle\vert \ \fm^t \cdot \left(N_f\right)_\alpha = 0 \right\}.$$

Suppose that $\alpha_1, \dots , \alpha_s \in \Z_p$ are the Bernstein--Sato roots of $f$. Recall that the $b$-function of $f$ takes the form
$$B_f = (\fa_1 : \alpha_1) \cdot (\fa_2 : \alpha_2) \cdots (\fa_s : \alpha_s) \sq C(\Z_p, V),$$
where $\fa_i \ceq \Ann_V( (N_f)_{\alpha_i})$. We conclude that, for any $i = 1, \dots , s$, we have
$$\str(\alpha_i, f) = \min \left\{t \geq 0 \ \middle\lvert \ \fm^t \sq \fa_i \right\}.$$
Let us describe what this entails in a fairly common situation.

\begin{example}
	Suppose that $(\mathcal W, \fm)$ is a discrete valuation ring of residue characteristic $p > 0$ and that $V = \mathcal W / \fm^{m+1}$. Then all ideals of $V$ take the form $\fm^t$ for some $0 \leq t \leq m + 1$. If $\Ann_V( (N_f)_\alpha) = \fm^t$ for some $0 \leq t \leq m + 1$, then $\str(\alpha, f) = t$. 
\end{example}

Finally, by Corollary~\ref{cor-Nf-C-desc} we have an isomorphism 
$$\left(N_f\right)_\alpha \cong \frac{\cD_R \cdot \bs{f^\alpha}}{\cD_R \cdot f \bs{f^\alpha}},$$
and, since $V$ is central in $\cD_R$, we get that $\fm^k (N_f)_\alpha = 0$ if and only if $\fm^k \bs{f^\alpha} \sq \cD_R \cdot f \bs{f^\alpha}$ in the module $R[f^{-1}] \bs{f^\alpha}$. We conclude that
$$\str(\alpha, f) = \min \left\{ t \geq 0 \ \middle\vert \ \fm^t \bs{f^\alpha} \sq \cD_R \cdot f \bs{f^\alpha} \right\}.$$
Using this description, we can give a simple example.

\begin{example} \label{ex-x1-strength}
	Suppose that $f$ is one of the variables of $R$, say $f = x_1$, and that $\alpha = -1$. Let $m \geq 0$ be the smallest nonnegative integer for which $\fm^{m+1} = 0$ in $V$. Recall that we have a natural $\cD_R$-module isomorphism $R[x_1^{-1}] \bs{x_1^{-1}} \cong R[x_1^{-1}]$ that exchanges $\bs{x_1^{-1}}$ with $x_1^{-1}$ and identifies $\cD_R \cdot x_1 \bs{x_1^{-1}} \cong R$. Since $\fm^t x_1^{-1} \in R$ if and only if $t \geq m + 1$, we conclude that $\str(-1, x_1) = m + 1$.
\end{example}

Finally, we show that replacing $V$ with a quotient can only make the strength decrease. More precisely, let $(V, \fm, \KK, F)$ be as in Setup~\ref{setup-V}, let $\fb \sq V$ is a proper ideal with $F(\fb) \sq \fb$, and let $\overline V \ceq V / \fb$. Then $\overline V$ inherits the lift of Frobenius from $V$, and therefore still satisfies the conditions of Setup~\ref{setup-V}. 

Let $R \ceq V[x_1, \dots , x_n]$ be a polynomial ring over $V$, and let $\overline R \ceq \overline V [x_1, \dots , x_n]$ be a polynomial ring over $\overline V$. Let $f \in R$ be a nonzerodivisor, and let $\overline f \ceq [f \mod \fb R] \in \overline R$ denote the image of $f$. Let $\alpha \in \Z_p$ be a $p$-adic integer. Recall that there is a surjective ring homomorphism $\cD_R \twoheadrightarrow \cD_{\bar R}$ which identifies $\cD_{\bar R} \cong \cD_R / \fb \cD_R$ (since $V$ is central in $\cD_R$, $\fb \cD_R$ is a two-sided ideal). Therefore, there is a natural correspondence between $\cD_{\bar R}$-modules and $\cD_R$-modules annihilated by $\fb$. 

\begin{proposition} \label{prop-strength-red}
	With the notation as above, we have
	$$\str(f, \alpha) \geq \str(\overline f, \alpha).$$
\end{proposition}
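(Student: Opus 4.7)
The plan is to exhibit a $V$-linear surjection $\pi : (N_f)_\alpha \twoheadrightarrow (N_{\bar f})_\alpha$ coming from reduction modulo $\fb$, and then observe that $\fm$ acts on the target through its image $\bar\fm$ in $\bar V$, so any annihilating power of $\fm$ automatically provides one for $\bar\fm$.

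First I would construct a natural $\cD_R$-linear surjection $\rho : R[f^{-1}]\bs{f^\alpha} \twoheadrightarrow \bar R[\bar f^{-1}]\bs{\bar f^\alpha}$, where $\cD_R$ acts on the target through the quotient map $\cD_R \twoheadrightarrow \cD_{\bar R} = \cD_R/\fb\cD_R$. On underlying modules, $\rho$ is just the canonical surjection $R[f^{-1}] \twoheadrightarrow \bar R[\bar f^{-1}]$ induced by $R \twoheadrightarrow \bar R$ (which is well defined since $\bar f$ is a nonzerodivisor in $\bar R$). The twisted action of $P \in \cD_R$ on $R[f^{-1}]\bs{f^\alpha}$ is $[g\bs{f^\alpha} \mapsto f^{-a}P(f^a g)\bs{f^\alpha}]$ for $a \in \Z$ sufficiently close to $\alpha$ in $p$-adic topology (Lemma \ref{lemma-conj-welldef}); reducing this expression modulo $\fb$ yields $\bar f^{-a}\bar P(\bar f^a \bar g)\bs{\bar f^\alpha}$, which is precisely the twisted action on the target. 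This compatibility shows $\rho$ is $\cD_R$-linear.

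Next I would restrict $\rho$ to the cyclic submodules. Since $\rho$ maps the generators $\bs{f^\alpha}$ and $f\bs{f^\alpha}$ to $\bs{\bar f^\alpha}$ and $\bar f\bs{\bar f^\alpha}$ respectively, we obtain surjections $\cD_R \cdot \bs{f^\alpha} \twoheadrightarrow \cD_{\bar R} \cdot \bs{\bar f^\alpha}$ and $\cD_R \cdot f\bs{f^\alpha} \twoheadrightarrow \cD_{\bar R} \cdot \bar f\bs{\bar f^\alpha}$. Passing to quotients and invoking the identification
$$(N_f)_\alpha \cong \frac{\cD_R \cdot \bs{f^\alpha}}{\cD_R \cdot f\bs{f^\alpha}}$$
established in the proof of Theorem \ref{thm-BSR-equivalences} (and analogously for $\bar f$), this yields the desired $V$-linear surjection $\pi : (N_f)_\alpha \twoheadrightarrow (N_{\bar f})_\alpha$.

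Finally, set $t = \str(\alpha, f)$, so that $\fm^t (N_f)_\alpha = 0$ by definition. Since $\pi$ is $V$-linear and surjective, this forces $\fm^t (N_{\bar f})_\alpha = 0$. But $(N_{\bar f})_\alpha$ is annihilated by $\fb$, so its $V$-module structure factors through $\bar V$, and the image of $\fm^t$ in $\bar V$ is exactly $\bar\fm^t$. Hence $\bar\fm^t (N_{\bar f})_\alpha = 0$, which gives $\str(\alpha, \bar f) \leq t = \str(\alpha, f)$. The only step that requires any care is the first one, namely verifying that reduction modulo $\fb$ is genuinely compatible with the two twisted $\cD_R$-actions; this reduces to the observation that the formula $[g \mapsto f^{-a}P(f^a g)]$ is functorial in the pair $(R, f)$ under surjections of $V$-algebras sending $f$ to a nonzerodivisor, and uses only that the level filtration on $\cD_R$ descends to the level filtration on $\cD_{\bar R}$.
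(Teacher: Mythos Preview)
Your proposal is correct and follows essentially the same approach as the paper: both arguments construct the surjection $(N_f)_\alpha \twoheadrightarrow (N_{\bar f})_\alpha$ by reducing $R[f^{-1}]\bs{f^\alpha}$ modulo $\fb$, checking that this respects the twisted $\cD_R$-action and sends the generators $\bs{f^\alpha}$, $f\bs{f^\alpha}$ to $\bs{\bar f^\alpha}$, $\bar f\bs{\bar f^\alpha}$, and then reading off the strength inequality from the existence of this quotient map. Your write-up is somewhat more explicit in verifying the compatibility of the twisted actions and in spelling out the passage from $\fm^t$ to $\bar\fm^t$ at the end, but the argument is the same.
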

\begin{proof}
	The natural isomorphism $\overline R[ \bar f^{-1}] \cong R[f^{-1}] / \fb R[f^{-1}]$ gives rise to a $\cD_R$-module isomorphism
	$$\overline R\left[\bar f^{-1}\right] \bs{\bar f^\alpha}  \cong R\left[f^{-1}\right] \bs{f^\alpha} \big/ \fb R\left[f^{-1}\right] \bs{f^\alpha}$$
	which exchanges $\bs{ \bar f^\alpha}$ with the class of $\bs{f^\alpha}$. As a consequence, we have surjective homomorphisms $\cD_R \cdot \bs{f^\alpha} \twoheadrightarrow \cD_{\bar R} \cdot \bs{\bar f^\alpha}$ and $\cD_R \cdot f \bs{f^\alpha} \twoheadrightarrow \cD_{\bar R} \cdot \bar f \bs{\bar f^\alpha}$. We conclude that $(N_{\bar f})_\alpha$ is a quotient of $(N_f)_\alpha$, and the result follows.
\end{proof}

We now discuss how the strengths of a given Bernstein--Sato root can detect whether such a root arises in characteristic zero. We begin by placing ourselves in a mod-$p$ reduction situation as follows.

Let $R_\C \ceq \C[x_1, \dots , x_n]$ be a polynomial ring over $\C$, let $f \in R_\C$ be a nonzero nonunit polynomial, and let $\cD_\C$ denote the ring of $\C$-linear differential operators of $R_\C$.

We let $b_f(s) \in \C[s]$ denote the Bernstein--Sato polynomial of $f$; recall that it satisfies a functional equation of the form
\begin{equation} \label{eqn-bfunction}
b_f(s) f^s = P(s) \cdot f^{s+1}
\end{equation}
for some differential operator $P(s) \in \cD_\C [s]$. 

Given a finitely generated $\Z$-subalgebra $\mathcal V \sq \C$, we let $R_{\mathcal V} \ceq \mathcal V[x_1, \dots , x_n]$ be the subring of $R_\C$ that consists of polynomials whose coefficients are in $\mathcal V$, and we let $\cD_{\mathcal V}$ denote the ring of $\mathcal V$-linear differential operators on $R_{\mathcal V}$. Note that there is a natural inclusion $\cD_{\mathcal V} \sq \cD_\C$.

We choose such a finitely generated $\Z$-subalgebra $\mathcal V \sq \C$ such that $f \in R_{\mathcal V}$,  that $b_f(s) \in \mathcal V[s]$, and  that $P(s) \in \cD_{\mathcal V}[s]$. We furthermore assume that $\mathcal V$ is smooth over $\Z$.

Specializing Equation (\ref{eqn-bfunction}) to any integer $a \in \Z$ gives
\begin{equation} \label{eqn-sp-bfunction}
	b_f(a) f^a = P(a) \cdot f^{a+1}, 
\end{equation}
and, by our choice of $\mathcal V$, this equation holds in $R_{\mathcal V}[f^{-1}]$.

Choose a maximal ideal $\fm \sq \mathcal V$ with $f \notin \fm R_{\mathcal V}$. Recall that $\mathcal V / \fm$ is a finite field; we let $p$ denote its characteristic. Given a positive integer $m \geq 0$, we let $V_m \ceq \mathcal V / \fm^{m+1}$, let $R_m \ceq V_m [x_1, \dots , x_n]$, and let $f_m \ceq [f \mod \fm^{m+1} R_{\mathcal V}] \in R_m$. Since $\mathcal V$ was assumed to be smooth over $A$, $V_m$ has a lift of Frobenius $F\colon V_m \to V_m$, which we assume to be surjective, and therefore $V_m$ satisfies the conditions of Setup~\ref{setup-V}. Note that $R_m$ has a compatible lift of Frobenius, which by an abuse of notation we also denote by $F\colon R_m \to R_m$.
 
We let $\cD_m$ denote the ring of $V_m$-linear differential operators on $R_m$; note that there is a natural surjective map $\cD_{\mathcal V} \to \cD_m$ which identifies $\cD_m \cong \cD_{\mathcal V} / \fm^{m+1} \cD_{\mathcal V}$. Given a differential operator $Q \in \cD_{\mathcal V}$ (or, more generally, $Q \in \mathcal V_\fm \otimes_V \mathcal D_{\mathcal V}$), we let $Q_m \ceq [Q \mod \fm^{m+1} \cD_{\mathcal V}] \in \cD_m$ denote its image under this map. Recall that $\cD_m$ comes with a level filtration induced by $F$:
$$\cD_m = \bigcup_{e = 0}^\infty \cD_m^{(F, e)}.$$

If $\alpha \in \Z_p$ is a $p$-adic integer, we can consider the strength $\str(\alpha, f_m)$ of $\alpha$ in $f_m$; varying $m$, we obtain a sequence
$$\left(\str(\alpha, f_m) \right)_{m = 0}^\infty,$$
which is nondecreasing by Proposition~\ref{prop-strength-red}. 

Note that if $\alpha \in \Z_{(p)}$ is a rational number whose denominator is not divisible by $p$, then the rational number $b_f(\alpha)$ lies in the localization $\mathcal V_{\fm} \sq \C$ and the differential operator $P(\alpha)$ lies in the localization $\mathcal V_\fm \otimes_{\mathcal V} \cD_{\mathcal V} \sq \cD_\C$. In particular, it makes sense to consider the image of $b_f(\alpha)$ in $V_m$ and the image $P(\alpha)_m$ of $P(\alpha)$ in $\cD_m$. Note that, if $\beta \in \Z_{(p)}$ is another such rational number with $\alpha \equiv \beta \mod p^{m+1} \Z_{(p)}$, in $V_m$ we have $b_f(\alpha) = b_f(\beta)$, and in $\cD_m$ we have $P(\alpha)_m = P(\beta)_m$.

\begin{theorem}
	Fix notation as above, let $\alpha \in \Z_{(p)}$ be a rational number whose denominator is not divisible by $p$, and let $m \geq 0$ be an integer. In the $\cD_m$-module $R_m [f_m^{-1}] \bs{f_m^\alpha}$, we have
	$$b_f(\alpha) \bs{f_m^\alpha} \in \cD_m \cdot f_m \bs{f_m^\alpha}.$$
\end{theorem}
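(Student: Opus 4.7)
The plan is to deduce the statement from the characteristic-zero functional equation by choosing a well-approximating integer and reducing modulo $\fm^{m+1}$, then transporting the resulting identity from $R_m[f_m^{-1}]$ to the twisted module $R_m[f_m^{-1}]\bs{f_m^\alpha}$.

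First, since $P(\alpha)_m \in \cD_m$, by Proposition \ref{prop-diffops-Frobenius} there is some integer $e \geq 0$ with $P(\alpha)_m \in \cD_m^{(F,e)}$. I would then choose an integer $a \in \Z$ with $a \equiv \alpha \bmod p^{e+m+1}\Z_p$; for example, a sufficiently long truncation of the $p$-adic expansion of $\alpha$. This forces $\alpha - a \in p^{m+1}\mathcal V_\fm \sq \fm^{m+1}\mathcal V_\fm$ (using $p \in \fm$), and by Taylor expansion applied to the polynomials $b_f(s) \in \mathcal V[s]$ and $P(s) \in \cD_{\mathcal V}[s]$, we obtain $b_f(\alpha)_m = b_f(a)_m$ in $V_m$ and $P(\alpha)_m = P(a)_m$ in $\cD_m$.

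Next, I would apply the specialized functional equation $b_f(a) f^a = P(a) \cdot f^{a+1}$, which by construction holds in $R_{\mathcal V}[f^{-1}]$. Reducing modulo $\fm^{m+1}$ yields
\[
b_f(a)_m f_m^a = P(a)_m \cdot f_m^{a+1}
\]
as an identity in $R_m[f_m^{-1}]$, where the right-hand side denotes the usual $\cD_m$-action on $R_m[f_m^{-1}]$.

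Finally, I would transport this identity to the module $R_m[f_m^{-1}]\bs{f_m^\alpha}$. Since $a \equiv \alpha \bmod p^{e+m}\Z_p$, the description of the twisted action recorded in Subsection \ref{subscn-Rf-alpha} gives
\[
P(\alpha)_m \cdot f_m \bs{f_m^\alpha} = f_m^{-a}\,P(\alpha)_m\!\left( f_m^{a+1}\right)\!\bs{f_m^\alpha} = f_m^{-a}\,P(a)_m\!\left(f_m^{a+1}\right)\!\bs{f_m^\alpha} = f_m^{-a}\,b_f(a)_m f_m^{a}\bs{f_m^\alpha} = b_f(\alpha)_m\,\bs{f_m^\alpha},
\]
which is precisely the desired containment. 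The only non-routine step is keeping track of the two compatibility requirements on the integer $a$ (closeness modulo $p^{m+1}$ for matching the coefficients of $b_f$ and $P$ after reduction, and closeness modulo $p^{e+m}$ so that the twisted action formula applies with the fixed representative $a$); both are achieved simultaneously by taking $a$ sufficiently $p$-adically close to $\alpha$, so there is no real obstacle.
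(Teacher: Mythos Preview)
Your proof is correct and follows essentially the same approach as the paper: specialize the characteristic-zero functional equation at an integer $a$ that is $p$-adically close to $\alpha$, reduce modulo $\fm^{m+1}$, and then use the description of the twisted action to rewrite the resulting identity in $R_m[f_m^{-1}]\bs{f_m^\alpha}$. The only organizational difference is that you choose $a$ to satisfy both congruence conditions at once, whereas the paper first picks $a \equiv \alpha \bmod p^{m+1}$ and later adjusts it to also satisfy $a \equiv \alpha \bmod p^{e+m}$; this is immaterial.
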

\begin{proof}
	Pick an integer $a \in \Z$ for which $a \equiv \alpha \mod p^{m+1} \Z_p$ (for example, by truncating the $p$-adic expansion of $\alpha$). By Equation (\ref{eqn-sp-bfunction}), over $V_m$ we obtain
	\begin{align*}
		b_f(\alpha) f_m^a & = b_f(a) f_m^a = P(a)_m \cdot f_m^{a+1} = P(\alpha)_m \cdot f_m^{a+1}.
	\end{align*}
	Choose $e \gg 0$ large enough so that $P(\alpha)_m \in \cD_m^{(F, e)}$, and, replacing $a$ if necessary, assume that $a$ satisfies  $a \equiv \alpha \mod p^{e + m} \Z_p$. We then get (see Section~\ref{subscn-Rf-alpha})
	\begin{align*}
		P(\alpha)_m \cdot f_m \bs{f_m^\alpha} & = f_m^{-a} \left(P(a)_m \cdot f_m^{a+1}\right) \bs{f_m^\alpha} \\
			& = f^{-a}_m\left( b_f(\alpha) f^a_m \right) \bs{f^\alpha_m} \\
			& = b_f(\alpha) \bs{f^\alpha_m}. \qedhere
	\end{align*}
\end{proof}

\begin{corollary} \label{cor-strength-b}
	We have
	$$\nu_p \left( b_f(\alpha) \right) \geq \str(\alpha, f_m),$$
	where $\nu_p(-)$ denotes $p$-adic valuation. In particular, whenever the sequence $( \str(\alpha, f_m) )_{m = 0}^\infty $ is unbounded, we have $b_f(\alpha) = 0$.
\end{corollary}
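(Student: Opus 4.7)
The plan is to deduce the corollary directly from the preceding theorem by reinterpreting its conclusion as an annihilator statement and then matching it against the definition of $\str(\alpha, f_m)$.

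First I would observe that $V_m$ is central in $\cD_m = \cD_{R_m | V_m}$. Combining the preceding theorem with the stalk description
\[
(N_{f_m})_\alpha \;\cong\; \cD_m \cdot \bs{f_m^\alpha} \,\big/\, \cD_m \cdot f_m \bs{f_m^\alpha},
\]
which follows from Corollary \ref{cor-Nf-C-desc} exactly as in the proof of Theorem \ref{thm-BSR-equivalences}, the central scalar $b_f(\alpha) \in V_m$ kills the class of the generator $\bs{f_m^\alpha}$, and by centrality it therefore annihilates all of $(N_{f_m})_\alpha$. Hence $b_f(\alpha) \in \fa_\alpha \ceq \Ann_{V_m}((N_{f_m})_\alpha)$.

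Next I would translate both sides of the desired inequality into statements about the ideal $\fa_\alpha$. From the characterizations of strength established at the start of this subsection, $\str(\alpha, f_m) = \min\{t \geq 0 : \fm^t \sq \fa_\alpha\}$, where $\fm$ denotes the maximal ideal of $V_m$. In the present setup $\fm = (p) V_m$, so this reduces to $\str(\alpha, f_m) = \min\{t \geq 0 : p^t \in \fa_\alpha\}$. Set $v \ceq \nu_p(b_f(\alpha))$. When $v \leq m$, write $b_f(\alpha) = u\, p^v$ with $u \in \Z_{(p)}^\times$; since the reduction of $u$ to $V_m$ remains a unit, the containment $b_f(\alpha) \in \fa_\alpha$ forces $p^v \in \fa_\alpha$, whence $\str(\alpha, f_m) \leq v$. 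When $v \geq m+1$ (including the case $v = \infty$, i.e.\ $b_f(\alpha) = 0$ in $V_m$), the inequality $\str(\alpha, f_m) \leq m+1 \leq v$ is automatic since $\fm^{m+1} = 0$ in $V_m$.

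The ``in particular'' clause is then immediate: if $(\str(\alpha, f_m))_{m = 0}^\infty$ is unbounded, then $\nu_p(b_f(\alpha)) \geq \str(\alpha, f_m)$ forces $\nu_p(b_f(\alpha)) = \infty$, so $b_f(\alpha) = 0$. I do not anticipate any serious obstacle; the whole argument is essentially a dictionary translation, the only real ingredients being the preceding theorem, the centrality of $V_m$ in $\cD_m$, and the fact that the maximal ideal of $V_m$ is generated by $p$.
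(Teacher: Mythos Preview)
Your argument is correct and is exactly the deduction the paper has in mind; the paper gives no proof of this corollary, treating it as immediate from the preceding theorem together with the description $\str(\alpha,f_m)=\min\{t:\fm^t\bs{f_m^\alpha}\subseteq\cD_m\cdot f_m\bs{f_m^\alpha}\}$ established earlier in the subsection.

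One small caveat: your sentence ``In the present setup $\fm = (p)V_m$'' is not automatic from the general framework of Section~4.4. There $\mathcal V$ is an arbitrary smooth finitely generated $\Z$-subalgebra of $\C$ and $\fm$ an arbitrary maximal ideal, so $\mathcal V_\fm$ need not be unramified (or even one-dimensional) over $\Z_{(p)}$. What is always true is that $b_f(\alpha)\in\Q$, so writing $b_f(\alpha)=u p^v$ with $u\in\Z_{(p)}^\times$ and $v=\nu_p(b_f(\alpha))$, the image of $u$ in $V_m$ is a unit and hence $p^v\in\fa_\alpha$; but to pass from $p^v\in\fa_\alpha$ to $\fm^v\subseteq\fa_\alpha$ one needs $\fm=(p)$. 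This holds in the setting of the Introduction (where $f\in\Z[x_1,\dots,x_n]$ and $p\gg 0$, so one may take $\mathcal V=\Z[1/N]$), which is clearly the intended context for the statement with $\nu_p$. In the general case the same argument yields $\str(\alpha,f_m)\le \nu_\fm(b_f(\alpha))$ for the $\fm$-adic valuation on $\mathcal V_\fm$, and since this differs from $\nu_p$ only by a bounded factor the ``in particular'' clause goes through unchanged.
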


%%%%%%%%%%%%%%%%%%%%%
% References
%%%%%%%%%%%%%%%%%%%%%

\newcommand{\etalchar}[1]{$^{#1}$}

\end{document}